\theoremstyle{plain}
\newtheorem{thm}{Theorem}[section]
\newtheorem{prop}[thm]{Proposition}
\newtheorem{lemma}[thm]{Lemma}
\newtheorem{cor}[thm]{Corollary}
\theoremstyle{definition}
\newtheorem{defn}[thm]{Definition}
\newtheorem*{defn*}{Definition}
\newtheorem*{question*}{Question}
\newtheorem{question}{Question}
\newtheorem*{example*}{Example}
\newtheorem{rem}[thm]{Remark}
\newtheorem*{rem*}{Remark}
\newtheorem{nota}[thm]{Notation}
\newtheorem*{nota*}{Notation}
\newcommand{\field}[1]{\mathbb{#1}}
\newcommand{\N}{\field{N}}
\newcommand{\Z}{\field{Z}}
\newcommand{\Q}{\field{Q}}
\newcommand{\A}{\field{A}}
\newcommand{\PP}{\field{P}}
\newcommand{\ideal}[1]{\mathfrak{#1}}
\newcommand{\m}{\ideal{m}}
\newcommand{\n}{\ideal{n}}
\newcommand{\p}{\ideal{p}}
\newcommand{\q}{\ideal{q}}
\newcommand{\ia}{\ideal{a}}
\newcommand{\func}[1]{\mathrm{#1} \,}
\newcommand{\Spec}{\func{Spec}}
\newcommand{\hgt}{\func{ht}}
\newcommand{\ra}{\rightarrow}
\newcommand{\be}{\begin{enumerate}}
\newcommand{\ee}{\end{enumerate}}
\newcommand{\li}
 {\leftfootline}
\newcommand{\onto}{\twoheadrightarrow}
\newcommand{\cO}{\mathcal{O}}
\renewcommand{\phi}{\varphi}
\DeclareMathOperator{\Frac}{Frac}
\DeclareMathOperator{\chr}{char}
\let\int\relax
\DeclareMathOperator{\int}{i}
\DeclareMathOperator{\Aut}{Aut}
\newcommand{\rco}{reciprocal complement}
\newcommand{\rcos}
{reciprocal complements}
\newcommand{\rrec}{ring of reciprocals}
\author{Neil Epstein}
\address{Department of Mathematical Sciences \\ George Mason University \\ Fairfax, VA  22030\\
USA}
\email{nepstei2@gmu.edu}
\author{Lorenzo Guerrieri}
\address{Instytut Matematyki \\
Jagiellonian University \\
30-348 Krak\'ow \\ Poland}
\email{lorenzo.guerrieri@uj.edu.pl}
\thanks{Lorenzo Guerrieri is supported by the grants MAESTRO NCN-UMO-2019/34/A/ST1/00263 -- Research in Commutative Algebra and Representation Theory and NAWA POWROTY-PPN/PPO/2018/1/00013/U/00001 -- Applications of Lie algebras to Commutative Algebra. }
\author{K. Alan Loper}
\address{Department of Mathematics\\
The Ohio State University at Newark\\
Newark, OH 43055\\
USA}
\email{lopera@math.ohio-state.edu}
\title[Reciprocal complement of polynomial rings]{The reciprocal complement of a polynomial ring in several variables over a field}
\date{June 27, 2025}
\begin{document}
\begin{abstract}
The \emph{\rco} $R(D)$ of an integral domain $D$ is the subring of its fraction field generated by the reciprocals of its nonzero elements.  Many properties of $R(D)$ are determined when $D$ is a polynomial ring in $n\geq 2$ variables over a field.  In particular, $R(D)$ is an $n$-dimensional, local, non-Noetherian, non-integrally closed, non-factorial, atomic G-domain, with infinitely many prime ideals at each height other than $0$ and $n$.
\end{abstract}

\maketitle

\section{Introduction}

Let $D$ be an integral domain, with fraction field $F$.  What can we say about the subring $R(D)$ of $F$ generated by the reciprocals of all the nonzero elements of $D$ (called the \emph{\rco}, or \emph{\rrec}, of $D$)?

Simple as the above question is, it appears to be a new one, and as we will see in this paper, the answer can be both surprising and satisfying.  The question arises naturally in the study of \emph{Egyptian domains}, which extends the notion of Egyptian fractions from the integers to arbitrary integral domains.  This study was initiated in \cite{GLO-Egypt} and continued in \cite{nme-Edom}. 
 Recall \cite{GLO-Egypt} that an integral domain $D$ is \emph{Egyptian} if every element of its fraction field $F$ can be written as a sum of (resp. of distinct) reciprocals of elements of $D$ (in general, such an element of $F$ is called \emph{Egyptian} or $D$-\emph{Egyptian}). 
 From the viewpoint of the above question then, $D$ is Egyptian if and only if $F = R(D)$.  So the distinction between $R(D)$ and $F$ can be seen as a measure of how far an integral domain is from being Egyptian.

The idea of Egyptian domains ultimately comes from the way the ancient Egyptians represented fractions.  Namely, they represented an element of $\Q \cap (0,1)$ as a sum of reciprocals of (distinct) positive integers (so-called \emph{unit fractions}).  More than 8 centuries ago, Fibonacci \cite{DuGr-FibE} showed that this is always possible.  However, it is far from unique.  There are always infinitely many ways to represent a positive rational number as a sum of distinct unit fractions. For the past century or so, number theorists have taken up questions of the diversity of ways to represent fractions as unit fractions.  Indeed, such questions can always be rephrased as diophantine equations. Most prominently, the Erd\H{o}s-Straus conjecture posits that for any $n\geq 5$, $4/n$ can be written as a sum of at most 3 unit fractions.
 
 In addition to its connection with Egyptian fractions, the setting of Egyptian domains and reciprocal complements also isolates natural properties of affine semigroups, rings, and even of algebraic varieties.  For instance, it can distinguish whether a subsemigroup $\Lambda$ of $\Q^n$ is a group, in the following sense. Let $D$ be an Egyptian domain (e.g. $\Z$, or any field).  Then $D[\Lambda]$ is Egyptian if and only if $\Lambda$ is a group (see \cite[Proposition 3]{GLO-Egypt} for ``if'' and \cite[Theorem 2.6]{nme-Edom} for ``only if'').  On the other hand, any \emph{local} domain is Egyptian \cite[Example 3]{GLO-Egypt} and in an affine-local sense, any domain that is finitely generated over a field is \emph{locally} Egyptian \cite[Corollary 3.11]{nme-Edom}, even though $k[x]$ is not Egyptian.  Thus, the Egyptian property is an essentially global property that cannot be checked locally, unlike many ring-theoretic properties.  Passing to algebraic varieties, Dario Spirito has shown \cite[Theorem 2.1]{Spi-recocurve} that when $k$ is an algebraically closed field, and $D$ is a one-dimensional finitely generated $k$-algebra that is a domain, then $D$ is Egyptian if and only if there is some realization $X \subseteq \A^n_k$ of $D$ that is regular at $\infty$ such that $|\bar X \setminus X| \neq 1$, where $\bar X$ is the projective closure of $X$ in $\PP^n_k$.  Otherwise, if $\{p\} = \bar{X} \setminus X$, he shows that the reciprocal complement of $D$ is isomorphic to $\cO_{\bar X, \{p\}}$.

The first named author called an integral domain \emph{Bonaccian} if for any nonzero $f \in F$, either $f$ or $1/f$ can be written as a sum of reciprocals from $D$.  Equivalently, $R(D)$ is a valuation domain. 
 He then showed that a Euclidean domain is always Bonaccian \cite{nme-Euclidean}, and indeed $R(D)$ is either a DVR or a field.
In particular, in \cite{nme-ratfuns}, the first named author showed that the \rco\ of $K[X]$ ($K$ a field, $X$ an indeterminate) is $K[T]_{(T)}$, where $T = 1/X$.

In the current paper, we show that no such thing is true for the \rco\ of a polynomial ring in two or more variables over a field.  Indeed, let $D = K[X_1, \ldots, X_n]$ and $R = R(D)$.  Then $R$ has many properties like that of $D$, but also many interesting, even exotic features. Our main results include the following: \begin{itemize}

    \item Any prime ideal of $R$ is generated by elements of the form $1/f$, $f \in D \setminus K$.  This is a special case of a result that holds in any \rco\ (See Proposition~\ref{pr:gensprimes}).
    \item $R$ is a local ring whose unique maximal ideal is generated by all elements of the form $1/f$ for $f\in D \setminus K$  (See Theorem~\ref{thm:islocalpoly}). This is a special case of a result that holds in any \rco\ (See Theorem~\ref{thm:islocal!!}).  
    \item For every $1\leq i \leq n-1$, $R$ has infinitely many primes of height $i$ (see Theorem~\ref{thm:infprimesallheights}).
    \item $\dim R = n$ (See Theorem~\ref{thm:dim}).
    \item $R$ is atomic (See Theorem~\ref{thm:atomic}).
    \item For every $j \leq n$, there is a prime ideal $\p \in \Spec R$ such that $R_\p$ is isomorphic to the \rco\ of a polynomial ring in $j$ variables over a field (See Proposition~\ref{pr:loclower}).
    \item $R$ is a \emph{G-domain}.  In fact, $R[\prod_{i=1}^n X_i] = \Frac R$  (see Proposition~\ref{pr:Gdom}).
    \item For certain height one primes $\p$, we have that $R_\p$ is a DVR (see Lemma~\ref{Xiprimes}).
    \item If $n\geq 2$, $R$ is not Noetherian.  In fact it is not even coherent (See Corollary~\ref{cor:notcoh}).
    \item If $n\geq 2$, $R$ is not integrally closed (See Theorem~\ref{thm:notic}).
    \item When $n\leq 2$ and $\hgt \p=1$, $R_\p$ is always Noetherian (see Theorem~\ref{localizations}).
    \item When $n=2$, any finitely generated ideal is contained in all but finitely many prime ideals (See Theorem~\ref{allbutfinitelymnyprimes}).  Since all but two prime ideals of $R$ have height one, this behavior can be seen as an extreme version of Krull's Principal Ideal Theorem. 
\end{itemize}


A key to our results has been a change in perspective, wherein one uses the $K$-automorphism $\sigma$ of the field $K(X_1, \ldots, X_n)$ that sends $X_i \mapsto 1/X_i$.  Then $R^* := \sigma(R(D))$ contains $D$ as a subring, even though $R^*$ and $R$ are isomorphic as rings.  The effect of this  on reciprocals of explicit polynomials is captured in Lemma~\ref{lem:star}.  We will sometimes use the $R^*$ point of view to analyze $R$, and sometimes the $R$ point of view.

We also use valuations in a variety of ways to control the behavior of prime ideals in $R$.

 Prior to this paper, there were four standard constructions that generally lead to non-Noetherian rings in ways that are essentially different from one another.  These are: \begin{itemize}
    \item Polynomial rings in infinitely many variables over a field or $\Z$ (and their quotients),
    \item Putting a valuation on a field with value group not isomorphic to $\Z$ and extracting the valuation ring,
    \item Pullbacks, and 
    \item Rings of integer-valued polynomials.
\end{itemize}
Now we know there is a fifth such construction: the reciprocal complement.  The fact that the reciprocal complement of such a well-behaved ring as $K[X_1, \ldots, X_d]$ for $d>1$ is non-Noetherian, not integrally closed, and so forth (see above) indicates that these properties probably also fail for reciprocal complements of many other otherwise well-behaved integral domains. This provides a fertile ground for and source of problems for factorization theory and other investigations in non-Noetherian commutative algebra, being so different (as seen in the list of properties above) from valuation rings, pullbacks, integer-valued polynomial rings, and infinite-dimensional polynomial rings.

\section
{General properties}

In this section, we determine some properties of the \rco\ $R=R(T)$ of any integral domain $T$.  In particular, we show that $R$ is always local (see Theorem~\ref{thm:islocal!!}), that the  prime ideals of $R$ are generated by reciprocals of elements of $T$ (see Proposition~\ref{pr:gensprimes}), and that if $T$ is finitely generated over a field, then the fraction field of $R$ is a finitely generated $R$-algebra (see Proposition~\ref{pr:Gdom}).

\begin{defn}
For any integral domain $T$, we let $R(T)$ be the \emph{\rco} of $T$. That is, if $F(T)$ is the fraction field of $T$, then $R(T)$ is the subring of $F(T)$ generated by all terms of the form $1/f$, $f \in T \setminus \{0\}$. Equivalently, $R(T)$ is the set of all finite sums $\frac 1{f_1} + \cdots + \frac 1{f_t}$ where $t\geq 0$ and each $f_i \in T \setminus \{0\}$. 
\end{defn}

Let $T$ be an integral domain.  Let $E$ be the set of Egyptian elements of $T$, and set $G := E \cup \{0\}$.  Recall \cite[Proposition 8(3), where $G$ is called $E_3$]{GLO-Egypt} that $G$ is a subring of $T$.  Since it must then be an integral domain, $E = G \setminus \{0\}$ is a multiplicatively closed subset of $T$.

\begin{prop}
 \label{pr:reductiontoKalgebra}
 Let $T, E, G$ be as above.  Then:
 \begin{enumerate}
 \item[(1)] $R(E^{-1}T)= R(T)$, and
 \item[(2)] The set of Egyptian elements of $E^{-1}T$, along with $0$, coincides with the fraction field of $G$.
 \end{enumerate}
 \end{prop}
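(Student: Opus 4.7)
The plan is to handle both parts by tracking how generators of reciprocal complements interact with the localization, using the key observation that every $e \in E$ lies in $R(T)$ because $e$ is, by hypothesis, Egyptian.

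For part~(1), the inclusion $R(T) \subseteq R(E^{-1}T)$ is immediate from $T \subseteq E^{-1}T$, since each reciprocal $1/f$ with $f \in T \setminus \{0\}$ is already a generator of $R(E^{-1}T)$. For the reverse inclusion, a typical generating reciprocal of $R(E^{-1}T)$ has the form $1/(t/e) = e/t$ with $t \in T \setminus \{0\}$ and $e \in E$. Rewriting this as $e \cdot (1/t)$, we have $1/t \in R(T)$ by definition of $R(T)$ and $e \in R(T)$ by the observation above, so the product, and hence any finite sum of such products, lies in $R(T)$.

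For part~(2), I would use part~(1) to identify the set of Egyptian elements of $E^{-1}T$ together with $0$ as the intersection $E^{-1}T \cap R(T)$, and then prove equality with $\Frac G$ by two containments. For $\Frac G \subseteq E^{-1}T \cap R(T)$: any $a/b$ with $a, b \in G$ and $b \neq 0$ lies in $E^{-1}T$ because $a \in T$ and $b \in E$, and lies in $R(T)$ because $a \in R(T)$ and $1/b \in R(T)$. Conversely, given $x \in E^{-1}T \cap R(T)$, write $x = t/e$ with $t \in T$ and $e \in E$; then $t = x \cdot e$ is a product of two elements of the ring $R(T)$, hence lies in $R(T)$, and so $t \in T \cap R(T) = G$. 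Since $e \in G$ as well, $x = t/e \in \Frac G$.

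The argument is essentially definition chasing, and I do not anticipate a significant obstacle; the crux is recognizing that $R(T)$ is a ring in which every element of $E$ is already invertible, so the intersection $E^{-1}T \cap R(T)$ automatically has the structure of the fraction field of $G$.
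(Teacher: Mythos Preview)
Your proof is correct and follows essentially the same approach as the paper's. Both arguments hinge on the observation that every $e \in E$ already lies in $R(T)$, and both establish the key identity $T \cap R(T) = G$ to pin down the reverse containment in part~(2); your framing of part~(2) via the intersection $E^{-1}T \cap R(T)$ is a slightly cleaner organization of the same computation the paper carries out.
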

 
 \begin{proof}
 To prove (1), note that 
 taking the \rco\ preserves inclusion; hence, $R(E^{-1}T) \supseteq R(T)$. Conversely, let $y \in R(E^{-1}T)$. By definition there exists $d_1, \ldots, d_n \in T$ and $e_1, \ldots, e_n \in E$ such that 
 \[ y= \frac{e_1}{d_1} + \cdots + \dfrac{e_n}{d_n}. 
 \]
 But each $e_i$ is an Egyptian element of $T$, thus we can write $e_i = \sum_{j=1}^{n_i} \frac{1}{d_{i_j}}$. It follows that \[
 y = \sum_{i=1}^n \dfrac{1}{d_i} \left( \sum_{j=1}^{n_i} \frac{1}{d_{i_j}} \right) \in R(T).
 \]

 To prove (2), let $K$ the fraction field of $G$. 
 Pick an $(E^{-1}T)$-Egyptian element $x$ of $E^{-1}T$. Hence,  $x \in R(E^{-1}T)= R(T)$. We can write $x = \frac{d}{e}$ with $d \in T$ and $e \in E = G \setminus \{0\}$. But $e \in R(T)$ and therefore $d = ex \in R(T)$. It follows that $d \in R(T) \cap T = G$. Thus $x = \frac de \in K$.

 Conversely, let $0\neq x \in K$.  Then $x = \frac gh$ where $g,h \in E$.  Write $g = \sum_{i=1}^s \frac 1{d_i}$, $d_i \in T$.  Then $x=\frac gh = \sum_{i=1}^s \frac 1{d_i/g}$.  Since each $d_i/g \in E^{-1}T$, we have $x \in R(E^{-1}T)$.  Since $g \in T$ and $h\in E$, we have $x \in E^{-1}T$.  Hence $x$ is an Egyptian element of $E^{-1}T$.
 \end{proof}

 \begin{lemma}
\label{lem:units}
Let $K$ be a field.  Let $T$ be a $K$-algebra all of whose Egyptian elements are in $K$. Let $x_1, \ldots, x_n \in T \setminus K$ and  $u \in K \setminus \{ 0 \}.$ Then \[
 y= u + \dfrac{1}{x_1}+ \cdots + \dfrac{1}{x_n}
 \] is 
 a unit in $R:=R(T)$.
\end{lemma}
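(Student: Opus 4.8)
The plan is to show $y\in R$ and $1/y\in R$, so that $y$ is a unit of $R$. The first is immediate: $u=1/u^{-1}$ with $u^{-1}\in K^{\times}\subseteq T$, so $u\in R$, and each $1/x_i\in R$. For the second, the starting point is the identity $\bigl(\prod_{i=1}^{n}x_i\bigr)y=\Delta$, where $\Delta:=u\prod_{i=1}^{n}x_i+\sum_{i=1}^{n}\prod_{j\neq i}x_j$; this $\Delta$ lies in $T$, so once we know $y\neq 0$ we will have $1/\Delta\in R$ and $1/y=\bigl(\prod_i x_i\bigr)/\Delta$, reducing everything to proving $\bigl(\prod_i x_i\bigr)/\Delta\in R$.

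To make the induction go through I would prove the following stronger statement by induction on $n$: for any field $K$, any $K$-algebra domain $T$ all of whose Egyptian elements lie in $K$, any $u\in K^{\times}$, and any $x_1,\dots,x_n\in T\setminus K$, if $\Delta:=u\prod_i x_i+\sum_i\prod_{j\neq i}x_j$ then $\Delta\neq 0$ and $\bigl(\prod_{i\in S}x_i\bigr)/\Delta\in R$ for \emph{every} $S\subseteq\{1,\dots,n\}$. The Lemma is the case $S=\{1,\dots,n\}$, and the base case $n=0$ is trivial ($\Delta=u$, $1/u\in R$). For the inductive step one first checks $\Delta\neq 0$, equivalently $y:=u+\sum_i 1/x_i\neq 0$: if $y=0$, then for $n=1$ we get $x_1=-u^{-1}\in K$, a contradiction, while for $n\geq 2$ we get $x_n=1/\bigl((-u)+\sum_{i<n}1/(-x_i)\bigr)$, whose reciprocal lies in $R$ by the inductive hypothesis (here $-u\in K^{\times}$ and each $-x_i\in T\setminus K$); hence $x_n\in R\cap T$, and since $R\cap T$ consists of $0$ together with the Egyptian elements of $T$ it is contained in $K$, again a contradiction.

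With $\Delta\neq 0$ secured, I would fix $S$ and induct on $|S|$, the case $S=\emptyset$ being $1/\Delta\in R$. For $i_0\in S$ put $S_0:=S\setminus\{i_0\}$, and let $\Delta_0:=u\prod_{k\neq i_0}x_k+\sum_{j\neq i_0}\prod_{k\neq j,\,k\neq i_0}x_k=\bigl(\prod_{k\neq i_0}x_k\bigr)z_0$, with $z_0:=u+\sum_{k\neq i_0}1/x_k$, be the quantities attached to the $(n-1)$-element family $(x_k)_{k\neq i_0}$; by the inductive hypothesis $\Delta_0\neq 0$. A one-line expansion gives $\Delta=x_{i_0}\Delta_0+\prod_{k\neq i_0}x_k$, hence $x_{i_0}\Delta_0=\Delta-\prod_{k\neq i_0}x_k$, and therefore
\[
\frac{\prod_{i\in S}x_i}{\Delta}=\frac{\prod_{i\in S_0}x_i}{\Delta_0}-\frac{\prod_{i\in S_0}x_i}{\Delta}\cdot\frac{1}{z_0}.
\]
The first term lies in $R$ by the outer hypothesis applied to $(x_k)_{k\neq i_0}$ with subset $S_0$; the factor $1/z_0=\bigl(\prod_{k\neq i_0}x_k\bigr)/\Delta_0$ lies in $R$ by the outer hypothesis with the full subset; and $\bigl(\prod_{i\in S_0}x_i\bigr)/\Delta\in R$ by the inner hypothesis since $|S_0|=|S|-1$. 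As $R$ is a ring, the left-hand side lies in $R$, which completes both inductions.

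The step I expect to be the main obstacle is obtaining a recursion that terminates. Peeling off the last summand and trying to invert $y'+1/x_n$, or clearing denominators and simplifying directly, each reduces the problem to an instance of the same shape and with no smaller measure: one is led to invert $1+(\text{an element of }R\text{ ``divisible by'' }1/x_n)$, which is again of the form in the Lemma with no control on the number of summands, or to pass from $1/y$ to $1/(x_n y)$, which is no gain since $x_n$ is not a unit of $R$. Strengthening the statement to all sub-products $\prod_{i\in S}x_i$ over $\Delta$ is what breaks the circle: in the displayed identity the first term strictly lowers $n$ and the last term keeps $n$ fixed but strictly lowers $|S|$, so the double induction is legitimate. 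The only auxiliary point used is that $x_i\notin K$ (hence also $-x_i\notin K$) together with the hypothesis on $T$ gives $R\cap T\subseteq K$, as every element of $R\cap T$ is $0$ or Egyptian.
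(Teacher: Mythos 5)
Your proof is correct and follows essentially the same route as the paper: the same induction on $n$ with the same nonvanishing argument ($y=0$ would force $x_n\in R\cap T$, hence Egyptian, hence in $K$), and your strengthened statement about $\bigl(\prod_{i\in S}x_i\bigr)/\Delta$ together with the displayed recursion is exactly the paper's family $H_k=\frac{\prod_{i\le k}x_i^{-1}}{y}$ (the chain $S=\{k+1,\dots,n\}$) and its identity $\prod_{i\le k}x_i^{-1}-H_{k+1}=z_0H_k$, with ``divide by the unit $z_0$'' replaced by the equivalent ``multiply by $1/z_0\in R$'' from the $(n-1)$ case. The only cosmetic differences are that you keep a general $u$ rather than normalizing to $u=1$ and allow arbitrary subsets $S$ rather than the chain, neither of which changes the substance.
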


\begin{proof}
We can reduce to the case $u=1$ by dividing all the $x_i$'s by $u$.

If $n=0$, the statement is vacuously true. 
 So we may assume $n\geq 1$ and 
work by induction on $n$.  Set $\alpha_i:= x_i^{-1}$ for each $1\leq i \leq n$.

If $y=0$, then $\alpha_n = -(1+ \sum_{i=1}^{n-1} \alpha_i) \in U(R)$ by inductive hypothesis.  Thus, $x_n=\alpha_n^{-1} \in R$, so $x_n$ is an Egyptian element of $T$, whence $x_n \in K$ by the assumptions on $T$.  But that contradicts the assumption on $x_n$ that $x_n \notin K$.  Hence, $y\neq 0$.

Next, notice that 
\[ H_n:= \frac{\prod_{i=1}^n \alpha_i}{1+ \alpha_1 + \ldots + \alpha_n } = \frac{1}{\prod_{i=1}^n x_i + \sum_{i=1}^n (\prod_{j \neq i}x_i)} \in R(T). \]
Starting from this fact we prove by reverse induction that $H_k := \frac{\prod_{i=1}^k \alpha_i}{1+ \alpha_1 + \ldots + \alpha_n } \in R(T)$ also for every $0 \leq k \leq n$. Suppose that $H_{k+1} \in R(T)$. Then notice that
\[
\prod_{i=1}^k \alpha_i - H_{k+1} = \frac{(\prod_{i=1}^k \alpha_i)(1+ \sum_{i\neq k+1} \alpha_i)}{1+ \alpha_1 + \ldots + \alpha_n} = (1+ \sum_{i\neq k+1} \alpha_i)H_k.
\]
Since by inductive hypothesis $1+ \sum_{i\neq k+1} \alpha_i$ is a unit in $R(T)$, we get $H_k \in R(T)$. In particular, $H_0 \in R(T)$, as was to be shown. 
\end{proof}


\begin{thm}
 \label{thm:islocal!!}
 Let $T$ be an integral domain. Then $R(T)$ is a local ring, with maximal ideal generated by all elements of the form $\frac 1x$, with $0 \neq x\in T$ not an Egyptian element.
 \end{thm}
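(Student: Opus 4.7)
The plan is to reduce to a cleaner setting via Proposition~\ref{pr:reductiontoKalgebra}, then apply Lemma~\ref{lem:units} twice: once to force a contradiction establishing that the candidate maximal ideal $\m$ is proper, and once to show every element outside $\m$ is a unit.

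First I would set $E$ equal to the nonzero Egyptian elements of $T$, $G = E \cup \{0\}$, $K = \Frac(G)$, $T' = E^{-1}T$, and $R = R(T) = R(T')$. Proposition~\ref{pr:reductiontoKalgebra} yields $T' \cap R = K$, and a short computation (if $ac \in E$ with $c \in T \setminus \{0\}$, then $a = (ac) \cdot (1/c)$ lies in $R \cap T = G$) shows both $U(T') = K^*$ and $T \cap K = G$. Consequently, the ideal in the theorem statement coincides with the ideal $\m$ of $R$ generated by $\{1/x : x \in T' \setminus K\}$. The next step is to upgrade this definition: every element of $\m$ is in fact a \emph{finite sum} of reciprocals of elements of $T' \setminus K$. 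Indeed, any element of $\m$ is a finite sum of terms $r \cdot (1/x)$ with $r = \sum_j 1/b_j \in R$ and $x \in T' \setminus K$; such a term equals $\sum_j 1/(x b_j)$, and since the units of $T'$ are exactly $K^*$, each product $x b_j$ is a non-unit and so lies in $T' \setminus K$. Splitting an arbitrary $r = \sum 1/f_i \in R$ according to whether $f_i \in K^*$ or $f_i \in T' \setminus K$ then yields $R = K + \m$.

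The hard part will be to show $K \cap \m = \{0\}$. Suppose $0 \neq u \in K$ admits a representation $u = 1/z_1 + \cdots + 1/z_n$ with each $z_k \in T' \setminus K$ (necessarily $n \geq 1$). Rearranging,
\[
\frac{1}{z_1} \;=\; u + \sum_{k=2}^{n} \frac{1}{-z_k},
\]
and since each $-z_k$ again lies in $T' \setminus K$, Lemma~\ref{lem:units} applied to the $K$-algebra $T'$ forces the right-hand side to be a unit of $R$. Hence $z_1 \in R \cap T' = K$, contradicting $z_1 \in T' \setminus K$.

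From $R = K \oplus \m$ I would then conclude $R/\m \cong K$, so $\m$ is maximal. Finally, for any $r \in R \setminus \m$, writing $r = u + s$ with $u \in K^*$ and $s$ a finite sum of reciprocals of elements of $T' \setminus K$, Lemma~\ref{lem:units} applied once more shows that $r$ is a unit of $R$. Every nonunit of $R$ thus lies in $\m$, proving that $R$ is local with maximal ideal $\m$.
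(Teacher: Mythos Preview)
Your proof is correct and follows essentially the same approach as the paper: both reduce via Proposition~\ref{pr:reductiontoKalgebra} to the situation where the Egyptian elements form a subfield $K$, then use Lemma~\ref{lem:units} both to show that elements outside $\m$ are units and to show that $\m$ is proper. The only differences are organizational: you perform the reduction to $T'$ at the outset rather than at the end, and your argument that $\m$ is proper (via $K\cap\m=\{0\}$, rearranging $u=\sum 1/z_k$ as $1/z_1 = u + \sum_{k\ge 2} 1/(-z_k)$ and invoking Lemma~\ref{lem:units} once) is a bit more direct than the paper's inductive argument that $\m$ contains no unit.
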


\begin{proof}
First assume that all the Egyptian elements of $T$ are in a subring $K$ of $T$ that is a field.  Let $\m$ be the set of finite sums of elements of the form $\frac 1x$, $x \in T \setminus K$.  Since any multiple of a nonunit of $T$ is a nonunit of $T$, and all units of $T$ are in $K$, it follows that $\m$ is an ideal of $R(T)$.  Moreover, any element of $R(T) \setminus \m$ is a unit of $R(T)$ by Lemma~\ref{lem:units}.  Thus it suffices to show that $\m$ does not contain a unit of $R(T)$.

Let $\alpha \in \m$.  Write $\alpha = \frac 1{x_1} + \cdots + \frac 1{x_n}$, with each $x_i \in T \setminus K$.  We proceed by induction on $n$ to show that $\alpha$ is not a unit.  If $n=0$ (so $\alpha =0$) the claim is vacuously true.  If $n>0$ and $\alpha$ is a unit of $R(T)$, then $\alpha^{-1} \in R(T)$.  We have \begin{equation}\label{eq:compoundinverse}
x_1 = (1/x_1)^{-1} = \frac 1{\alpha - \sum_{i=2}^n \frac 1{x_i}} = \frac{\alpha^{-1}}{1-\alpha^{-1}\sum_{i=2}^n \frac 1{x_i}}.
\end{equation}
But since $\alpha^{-1} \in R(T)$, $\sum_{i=2}^n \frac 1{x_i} \in \m$ by inductive hypothesis, and $\m$ is an ideal, we have $-\alpha^{-1}\sum_{i=2}^n \frac 1{x_i} \in \m$.  By Lemma~\ref{lem:units}, it follows that the denominator of (\ref{eq:compoundinverse}) is a unit.  Hence, $x_1 \in R(T)$, so that $\frac 1{x_1} \in U(R(T)) \subseteq K$, contradicting the fact that $x_1 \notin K$. 

 Finally, we drop the assumption on $T$.  Let $E$ be the set of Egyptian elements of $T$.  Then by Proposition~\ref{pr:reductiontoKalgebra}, $R(T) = R(E^{-1}T)$, and $E^{-1}T$ has all its Egyptian elements in the subfield $E^{-1}G$, where $G$ is the subring $E \cup \{0\}$ of $T$.  Then by the first part of the proof, $R(T)$ is a local ring whose maximal ideal $\m$ is generated by all elements of the form $\frac 1{x/e}$, where $x \in T \setminus \{0\}$, $e \in E$, and $x/e \notin E^{-1}G$.  First note that since every $e\in E$ is a unit of $R(E)$, it follows that $\m$ is generated by those elements $\frac 1x$ where $x\in T \setminus \{0\}$ and $x \notin E^{-1}G$. But since $E^{-1}G \cap T = G$, the result follows. 
\end{proof}

We will see shortly that every prime ideal of $R(T)$ shares the property with $\m$ that it is generated by reciprocals of elements of $T$.  First, we need the following notion of length.
\begin{defn}
    For $\alpha \in R(T)$, the $T$-\emph{length} of $\alpha$, denoted $\ell_T(\alpha)$ (or the \emph{length of} $\alpha$, $\ell(\alpha)$, if the ring is understood) is the minimum number $t$ such that there exist $f_1, \ldots, f_t \in T$ such that $\alpha = \frac 1{f_1} + \cdots + \frac 1{f_t}$.  For $\alpha$ in the fraction field of $T$ but not in $R(T)$, we write $\ell_T(\alpha) = \infty$.
\end{defn}

\begin{lemma}\label{lem:factorproduct}
Let $T$ be an integral domain and $0\neq \alpha \in R(T)$.  Write $\alpha = \sum_{i=1}^t \frac 1{f_i}$ with $t=\ell_T(\alpha)$ and each $f_i \in T \setminus \{0\}$.  Then in $R(T)$, $\alpha$ is a factor of the product of all the elements $1/f_i$.
\end{lemma}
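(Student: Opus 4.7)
The plan is to do a direct computation: compute the quotient $\bigl(\prod_{i=1}^t \frac{1}{f_i}\bigr)/\alpha$ explicitly and show it lies in $R(T)$. First I would clear denominators in the relation $\alpha = \sum_{i=1}^t \frac{1}{f_i}$ to obtain
\[
\alpha \cdot f_1 f_2 \cdots f_t \;=\; \sum_{i=1}^t \prod_{j \neq i} f_j \;=:\; g,
\]
an element of $T$.

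Next I would observe that $g \neq 0$. Indeed, we are working inside the fraction field of $T$, which is an integral domain; since $\alpha \neq 0$ by hypothesis and every $f_i \neq 0$, the left-hand side $\alpha \cdot f_1 \cdots f_t$ is nonzero, hence so is $g$. (Here no minimality of $t$ is needed, only that the $f_i$ and $\alpha$ are nonzero.)

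Then I would rewrite the ratio of the product by $\alpha$:
\[
\frac{\prod_{i=1}^t (1/f_i)}{\alpha} \;=\; \frac{1}{\alpha \cdot f_1 f_2 \cdots f_t} \;=\; \frac{1}{g}.
\]
Since $g$ is a nonzero element of $T$, by definition $\tfrac{1}{g} \in R(T)$. Therefore $\alpha$ divides $\prod_{i=1}^t \frac{1}{f_i}$ in $R(T)$, as claimed. There is really no main obstacle here; the entire content is the identity $\alpha \cdot \prod f_i = \sum \prod_{j \neq i} f_j$ together with the observation that this sum is a nonzero element of $T$, so its reciprocal sits in $R(T)$ by fiat. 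The hypothesis $t = \ell_T(\alpha)$ is not actually invoked in the argument; it appears in the statement presumably because later applications (e.g.\ toward Proposition~\ref{pr:gensprimes}) will want to work with a minimal-length representation.
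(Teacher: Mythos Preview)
Your argument is correct and is essentially identical to the paper's own proof: both compute $\alpha \cdot \prod_i f_i$ as a nonzero element of $T$ (the paper calls it $F\alpha$ with $F=\prod_i f_i$, you call it $g$), observe that its reciprocal therefore lies in $R(T)$, and conclude from $\prod_i \frac{1}{f_i} = \frac{1}{F\alpha}\cdot \alpha$. Your remark that the minimality hypothesis $t=\ell_T(\alpha)$ is not actually used is also accurate.
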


\begin{proof}
Set $F := \prod_{i=1}^t f_i$.  Since $F/f_i \in T$ for each $i$, we have $F\alpha \in T$, and since both $F$ and $\alpha$ are nonzero elements of the fraction field of $T$, we have $F \alpha \neq 0$.  Hence $\frac 1{F\alpha} \in R(T)$.  Then the equation $\frac 1{F} = \frac 1{F\alpha} \cdot \alpha$ finishes the proof.
\end{proof}

\begin{lemma}
\label{lem:lengthminusone}
Let $T$ be an integral domain and $0 \neq \alpha \in R(T)$.  Let $t=\ell_T(\alpha)$ and write $\alpha = \sum_{i=1}^t \frac 1{f_i}$ with $f_i \in T \setminus \{0\}$.  Then $\ell_T(\alpha - \frac 1{f_t}) = t-1$.
\end{lemma}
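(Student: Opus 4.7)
The plan is to prove this by establishing both inequalities, with the key observation being that the length function is precisely defined by minimizing the number of reciprocal summands, so any shorter representation of $\alpha - \frac{1}{f_t}$ would yield a too-short representation of $\alpha$ itself.

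First, I would establish the upper bound $\ell_T(\alpha - \frac{1}{f_t}) \leq t-1$. This is immediate: from the given expression $\alpha = \sum_{i=1}^t \frac{1}{f_i}$ we obtain $\alpha - \frac{1}{f_t} = \sum_{i=1}^{t-1} \frac{1}{f_i}$, which exhibits the left-hand side as a sum of $t-1$ reciprocals of nonzero elements of $T$ (with the convention that the empty sum equals $0$ handles the case $t=1$, where $\ell_T(0) = 0$).

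For the lower bound, I would argue by contradiction. Suppose $\ell_T(\alpha - \frac{1}{f_t}) = s$ with $s \leq t-2$. Then by definition of length there exist $g_1, \ldots, g_s \in T \setminus \{0\}$ such that
\[
\alpha - \frac{1}{f_t} = \sum_{j=1}^s \frac{1}{g_j}.
\]
Adding $\frac{1}{f_t}$ to both sides gives
\[
\alpha = \frac{1}{f_t} + \sum_{j=1}^s \frac{1}{g_j},
\]
which expresses $\alpha$ as a sum of $s+1 \leq t-1$ reciprocals of nonzero elements of $T$. This contradicts the minimality built into the assumption $\ell_T(\alpha) = t$, completing the proof.

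There is no real obstacle here; the only subtle point is the edge case $t=1$, where $\alpha - \frac{1}{f_t} = 0$ and one must interpret $\ell_T(0) = 0$ as the length of the empty sum, consistent with the definition which allows $t \geq 0$.
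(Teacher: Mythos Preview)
Your proof is correct and follows essentially the same approach as the paper: establish $\ell_T(\alpha-\tfrac{1}{f_t})\leq t-1$ from the given representation, then show that any representation of $\alpha-\tfrac{1}{f_t}$ of length $s$ yields one of $\alpha$ of length $s+1$, forcing $s\geq t-1$. The paper phrases the second half as a direct inequality chain $t=\ell_T(\alpha)\leq s+1\leq t$ rather than a contradiction, but the argument is identical.
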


\begin{proof}
Write $\beta = \alpha - \frac 1{f_t}$.  Since $\beta = \sum_{i=1}^{t-1} \frac 1{f_i}$, we have $\ell_T(\beta) \leq t-1$.  Write $\beta = \sum_{j=1}^s \frac 1{g_j}$, where $s=\ell_T(\beta)$ and each $g_j \in T \setminus \{0\}$.  Then $\alpha = \frac 1{f_t} + \sum_{j=1}^s \frac 1{g_j}$. Thus, \[
t=\ell_T(\alpha) \leq s+1 \leq (t-1)+1 = t.
\]
Hence, $s+1=t$, as was to be shown.
\end{proof}

As a consequence of the above two lemmas, we obtain the following result about the generators of any prime ideal of $R(T)$, which recapitulates the fact about the maximal ideal of $R(T)$ given in Theorem~\ref{thm:islocal!!}.

\begin{prop}
\label{pr:gensprimes}
Any prime ideal of $R(T)$ is generated by elements of the form $1/f$, $f \in T$.
\end{prop}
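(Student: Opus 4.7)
The plan is a straightforward induction on the $T$-length, combining Lemma~\ref{lem:factorproduct} and Lemma~\ref{lem:lengthminusone}. Fix a prime ideal $\mathfrak{p}$ of $R := R(T)$, and let $I$ denote the ideal of $R$ generated by the set
\[
S := \{\, 1/f : f \in T \setminus \{0\},\ 1/f \in \mathfrak{p} \,\}.
\]
Trivially $I \subseteq \mathfrak{p}$, so everything reduces to showing $\mathfrak{p} \subseteq I$. For this, I would argue by induction on $t := \ell_T(\alpha)$ that every nonzero $\alpha \in \mathfrak{p}$ lies in $I$.

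The base case $t = 1$ is immediate: here $\alpha = 1/f_1$ for some $f_1 \in T \setminus \{0\}$, and since $\alpha \in \mathfrak{p}$, the element $1/f_1$ belongs to $S$, so $\alpha \in I$.

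For the inductive step, take a minimal-length representation $\alpha = \sum_{i=1}^t 1/f_i$ with $t \geq 2$. Lemma~\ref{lem:factorproduct} says $\alpha$ divides the product $\prod_{i=1}^t (1/f_i)$ in $R$; hence that product lies in $\mathfrak{p}$. Primality of $\mathfrak{p}$ produces an index $j$ with $1/f_j \in \mathfrak{p}$, so $1/f_j \in S \subseteq I$. After relabeling the summands we may take $j = t$, and then $\alpha - 1/f_t \in \mathfrak{p}$ has $T$-length exactly $t-1$ by Lemma~\ref{lem:lengthminusone}. The inductive hypothesis gives $\alpha - 1/f_t \in I$, and adding back $1/f_t \in I$ yields $\alpha \in I$.

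The only mildly delicate point is the relabeling step: Lemma~\ref{lem:lengthminusone} is phrased for the ``last'' summand of a minimal-length representation, but since the indexing of the $f_i$ is arbitrary, the same length-drop identity applies when any chosen summand of a fixed minimal representation is subtracted. Beyond this, no real obstacle arises; the proof is essentially a clean interplay between the divisibility observation in Lemma~\ref{lem:factorproduct} and the primality of $\mathfrak{p}$, with Lemma~\ref{lem:lengthminusone} driving the induction.
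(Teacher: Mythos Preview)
Your proof is correct and follows essentially the same approach as the paper's: induction on $T$-length, using Lemma~\ref{lem:factorproduct} together with primality to extract a reciprocal $1/f_j \in \mathfrak{p}$, then Lemma~\ref{lem:lengthminusone} to drop the length and close the induction. The only cosmetic difference is that you package the conclusion via the auxiliary ideal $I$, whereas the paper phrases it as ``$\alpha$ is a sum of reciprocals lying in $\mathfrak{p}$''; your explicit remark on the relabeling is a nice touch that the paper leaves implicit.
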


\begin{proof}
Let $0 \neq \alpha \in \p$, $t=\ell_T(\alpha)$.  We proceed by induction on $t$ to show that $\alpha$ is a sum of reciprocals of elements of $\p$.

When $t=1$, it is clear. Suppose $t>1$.  Write $\alpha = \sum_{i=1}^t \frac 1{f_i}$, $f_i \in T$.  By Lemma~\ref{lem:factorproduct}, $\prod_{i=1}^t \frac 1{f_i}$ is a multiple of $\alpha \in R(D)$.  Hence, $\prod_{i=1}^t \frac 1{f_i} \in \p$.  Since $\p$ is prime, it follows that $\frac 1{f_i} \in \p$ for some $1\leq i \leq t$.  Let $\beta = \alpha - \frac 1{f_i}$. Clearly $\beta \in \p$, and by Lemma~\ref{lem:lengthminusone}, $\ell_T(\beta) = t-1$, so by inductive hypothesis, $\beta$ is a sum of elements of the form $\frac 1g \in \p$, with $g\in T$.  Thus, $\alpha = \beta + \frac 1{f_i}$ is also such a sum.
\end{proof}

We culminate this section with a result on \rcos\ of finitely generated $K$-algebras.

\begin{prop}\label{pr:Gdom}
Let $L/K$ be a field extension, let $f_1, \ldots, f_n \in L$, and let $T = K[f_1, \ldots, f_n]$.  Then $R(T)[\prod_{i=1}^n f_i] = \Frac T$.  Hence $1/\prod_{i=1}^n f_i \in \p$ for every nonzero prime ideal $\p$ of $R(T)$.
\end{prop}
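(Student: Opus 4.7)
The plan is to prove the ring equation $R(T)[\prod_{i=1}^n f_i] = \Frac T$ directly, and then deduce the claim about prime ideals by a routine localization argument. (If some $f_i = 0$, it contributes nothing to $T$ and can be removed, so I may assume all $f_i$ are nonzero.)

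For the ring equation, the inclusion $R(T)[\prod f_i] \subseteq \Frac T$ is immediate since $R(T) \subseteq \Frac T$ and $\prod f_i \in T \subseteq \Frac T$. For the reverse inclusion, I would first observe that $R(T)$ contains $K$: for any nonzero $k \in K \subseteq T$ we have $1/k \in T$, so $k = 1/(1/k) \in R(T)$. Next, each $1/f_i$ is a generator of $R(T)$, so writing
\[
f_j \;=\; \Bigl(\prod_{i=1}^n f_i\Bigr) \cdot \prod_{i \neq j} \frac{1}{f_i}
\]
exhibits $f_j \in R(T)[\prod f_i]$ for every $j$. Hence $T = K[f_1, \ldots, f_n] \subseteq R(T)[\prod f_i]$. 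Since $1/q \in R(T)$ for every nonzero $q \in T$, every $p/q \in \Frac T$ lies in $R(T)[\prod f_i]$, giving the desired equality.

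For the prime ideal statement, set $s := \prod_{i=1}^n f_i$. Since $1/s \in R(T)$, adjoining $s$ to $R(T)$ is the same as inverting $1/s$, so the first part gives
\[
(R(T))_{1/s} \;=\; R(T)[s] \;=\; \Frac T \;=\; \Frac R(T),
\]
where the last equality holds because $R(T) \subseteq \Frac T$ and the localization $R(T)[s]$ equals $\Frac T$. By the standard correspondence of primes under localization, the only prime of $R(T)$ that survives in $(R(T))_{1/s}$ is the zero ideal; equivalently, every nonzero prime of $R(T)$ must contain $1/s = 1/\prod f_i$.

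I do not anticipate any real obstacle here: once one notices that $R(T)$ already contains every $1/f_i$ and every element of $K$, the identity displayed above recovers each $f_j$ inside $R(T)[\prod f_i]$, and the rest is bookkeeping. The only mildly subtle point is recognizing that $R(T)[\prod f_i]$ is literally the localization $(R(T))_{1/s}$, which is what makes the second half of the statement follow with no further work.
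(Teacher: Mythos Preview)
Your proof is correct and follows essentially the same route as the paper's: both recover each $f_j$ inside $R(T)[\prod_i f_i]$ via $f_j = (\prod_i f_i)\cdot \prod_{i\neq j}(1/f_i)$, conclude $T\subseteq R(T)[\prod_i f_i]$, and then use $1/q\in R(T)$ to obtain all of $\Frac T$. The only difference is cosmetic: for the prime ideal assertion the paper cites Kaplansky's Theorem~19, whereas you unpack that citation by observing $R(T)[s]=(R(T))_{1/s}$ is a field and invoking the prime correspondence for localizations---which is exactly the content of the cited result.
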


\begin{proof}
Write $g = \prod_{i=1}^n f_i$. First, note that since $g \in T$, we have $1/g \in R(T)$.

We have $f_1 = g / (\prod_{i=2}^n f_i)  = g \cdot \frac 1{\prod_{i=2}^n f_i}\in R(T)[g]$, since $\frac 1{\prod_{i=2}^n f_i} \in R(T)$.  By symmetry, we have $f_1, \ldots, f_n \in R(T)[g]$.  Obviously $K \subseteq R(T)$ as well, so $T = K[f_1, \ldots, f_n] \subseteq R(T)[g]$. Let $\alpha \in \Frac T$.  We may write $\alpha = u/v$ with $u,v \in T$, $v\neq 0$.  Then since $u\in T \subseteq R(T)[g]$ and $1/v \in R(T) \subseteq R(T)[g]$, we have $\alpha = u\cdot \frac 1v \in R(T)[g]$.  Thus, $\Frac T \subseteq R(T)[g]$, but the reverse containment is obvious, so $R(T)[g]=\Frac T$.

The final statement follows from \cite[Theorem 19]{Kap-CR}.
\end{proof}

\begin{rem}\label{rem:Gdom}
Recall that a \emph{G-domain} is an integral domain whose fraction field is a finitely generated algebra over it \cite[Definition following Theorem 18]{Kap-CR}.  Hence, Proposition~\ref{pr:Gdom} implies that for any integral domain $T$ that is finitely generated over a field, $R(T)$ is a G-domain.  
\end{rem}

\section{Properties and bounds on the ring of polynomial reciprocals}
In this section, we give bounds on the \rco\ of a polynomial ring in $n$ variables.  That is, we exhibit rings that it is contained in and rings that it contains.  We also show it is atomic, but fails unique factorization.  A main tool is the map $\sigma$, an involution on $K(X_1, \ldots, X_n)$, which makes our ring isomorphic to an overring of $K[X_1, \ldots, X_n]$.

\begin{nota}\label{nota}
Let $D = D_n = K[X_1, \ldots, X_n]$, the polynomial ring in $n$ variables over a field $K$, where $n \geq 1$.  Let $F = F_n$ the fraction field of $D_n$. That is, $F_n = K(X_1, \ldots, X_n)$.
We set  $R := R_n = R(D_n)$.


We let $\sigma = \sigma_n: F_n \ra F_n$ be the unique $K$-algebra homomorphism that sends $X_i \mapsto \frac 1{X_i}$ for $1\leq i \leq n$.  Note that $\sigma \circ \sigma = 1_F$; hence $\sigma$ is an \emph{involution}, whence a $K$-automorphism of $F$.  For any subring $T$ of $F$, we set $T^* := \sigma(T)$.

We define $2n$ functions $t_i, a_i: D \setminus \{0\} \ra \N_0$ for $1\leq i \leq n$ as follows.  We set $t_i(f) = c$ if $X_i^c | f$ but $X_i^{c+1} \nmid f$.  We let $a_i(f) = \deg_{X_i}(f) - t_i(f)$, where $\deg_{X_i}(f) =$ the degree of $f$ as a polynomial in $X_i$ with coefficients in $K[X_1, \ldots, X_{i-1}, X_{i+1}, \ldots, X_n]$.
Then for any $f \in D \setminus \{0\}$, we write $f = \left(\prod_{i=1}^n X_i^{t_i(f)}\right) f_0$, where $f_0 \in D \setminus \bigcup_{i=1}^n X_iD$.

For any $n$-tuple $(u_1, \ldots, u_n) \in \Z^n$, write $\mathbf u := (u_1, \ldots, u_n)$ and $\mathbf X^{\mathbf u} := \prod_{i=1}^n X_i^{u_i}$.
\end{nota}

Our first goal will be to prove that the maximal ideal is generated by the reciprocals of the nonconstant polynomials.

\begin{lemma}\label{lem:Egyptgraded}
Let $T$ be an $\N$-graded integral domain.  Then all the Egyptian elements of $T$ are in $T_0$, its 0th graded component.
\end{lemma}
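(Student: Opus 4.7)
The plan is to exploit the maximal-degree function of the grading. For each nonzero $s \in T$, let $\deg(s)$ denote the largest $m$ such that the degree-$m$ graded component of $s$ is nonzero. Because $T$ is $\N$-graded, $\deg(s) \geq 0$, with $\deg(s) = 0$ exactly when $0 \neq s \in T_0$. The proof hinges on two properties: (i) $\deg(ab) = \deg(a) + \deg(b)$, because the product of the top-degree components of $a$ and $b$ is the top-degree component of $ab$ and is nonzero since $T$ is a graded domain; and (ii) $\deg(a+b) \leq \max(\deg(a), \deg(b))$ whenever $a+b \neq 0$.

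Given an Egyptian element $t \in T$, I may assume $t \neq 0$ (the case $t = 0$ is trivial, since $0 \in T_0$). Write $t = \sum_{i=1}^n 1/f_i$ with each $f_i \in T \setminus \{0\}$, set $F := \prod_{i=1}^n f_i$ and $D_i := \deg(f_i) \geq 0$, and clear denominators to obtain the identity
\[
tF \;=\; \sum_{i=1}^n \prod_{j \neq i} f_j
\]
in $T$. By (i), $\deg(tF) = \deg(t) + \sum_i D_i$ and $\deg\bigl(\prod_{j \neq i} f_j\bigr) = \sum_{j \neq i} D_j$, while (ii) bounds the right-hand side by $\max_i\bigl(\sum_j D_j - D_i\bigr) = \sum_j D_j - \min_i D_i$. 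Comparing these yields $\deg(t) \leq -\min_i D_i \leq 0$, and since $\deg(t) \geq 0$ we conclude $\deg(t) = 0$, i.e.\ $t \in T_0$.

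I do not foresee any substantive obstruction. The only sanity check needed is that $T$ being a domain prevents cancellation of top-degree components, which is exactly what gives property (i); once this is in hand, the argument is just careful bookkeeping with $\deg$ applied to the clearing-of-denominators identity.
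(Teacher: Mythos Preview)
Your proof is correct and follows essentially the same approach as the paper: define the top-degree function, use its additivity and subadditivity on sums, clear denominators in the Egyptian representation, and compare degrees to force $\deg(t)=0$. Your version is slightly more explicit (you compute the maximum on the right as $\sum_j D_j - \min_i D_i$ and handle $t=0$ separately), but the argument is the same.
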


\begin{proof}
For $f\in T$, we let its \emph{degree} be the degree of its largest nonzero graded component.  Note that degree is then additive in $T$; that is, if $f,g \in T \setminus \{0\}$, then $\deg(fg) = \deg(f) + \deg(g)$.  Also, if $f+g\neq 0$, then $\deg(f+g) \leq \max\{\deg f, \deg g\}$.

With this in mind, let $f\in T$ be Egyptian.  Write \[
f = \frac 1{f_1} + \cdots + \frac 1{f_s}
\]
with $f_j \in T$ for each $j$.  Clearing denominators by multiplying through by $\prod_{i=1}^s f_i$, we have \[
ff_1 \cdots f_s = \sum_{i=1}^s \prod_{j\neq i} f_j.
\]
By equating degrees, it follows that \[
\deg(f) + \sum_{i=1}^s \deg(f_i) \leq \max_i \sum_{j\neq i} \deg(f_j).
\]
Since all degrees are nonnegative, it follows that $\deg f = 0$, so that $f \in T_0$.
\end{proof}

\begin{thm}\label{thm:islocalpoly}
$R$ (as in Notation~\ref{nota}) is a local ring, with maximal ideal generated by all elements of the form $\frac 1f$, with $f$ a nonconstant polynomial.
\end{thm}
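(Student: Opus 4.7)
The plan is to derive this as an immediate consequence of the general local-ness result, Theorem~\ref{thm:islocal!!}, once we identify the Egyptian elements of $D_n$. By Theorem~\ref{thm:islocal!!}, $R(T)$ is always local with maximal ideal generated by $\{1/x : 0 \neq x \in T,\ x \text{ not Egyptian}\}$, so it suffices to show that the Egyptian elements of $D_n = K[X_1,\ldots,X_n]$ are precisely the nonzero constants, i.e.\ $K \setminus \{0\}$.

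For the containment $\{\text{Egyptian elements}\} \subseteq K$, I would invoke Lemma~\ref{lem:Egyptgraded}: equip $D_n$ with its standard $\N$-grading by total degree, so that $D_n = \bigoplus_{d \geq 0} (D_n)_d$ and $(D_n)_0 = K$. The lemma then forces every Egyptian element of $D_n$ to lie in $K$.

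For the reverse containment, I would observe that for any $0 \neq c \in K$, we have $1/c \in K \subseteq D_n \setminus \{0\}$, and therefore $c = \frac{1}{1/c}$ is a sum (of length one) of reciprocals of elements of $D_n$, hence Egyptian. Combined with the previous paragraph, this shows that the set of nonzero non-Egyptian elements of $D_n$ is exactly the set of nonconstant polynomials $D_n \setminus K$. Plugging this into Theorem~\ref{thm:islocal!!} yields the claim.

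Since essentially all the work has already been carried out in Section~2 and in Lemma~\ref{lem:Egyptgraded}, there is no real obstacle here; the only point requiring attention is to note that $K$ being a field makes each $c \in K\setminus\{0\}$ Egyptian in a trivial way, so the only ``non-Egyptian'' nonzero elements are the nonconstant polynomials, matching the statement verbatim.
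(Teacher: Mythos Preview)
Your proposal is correct and matches the paper's proof essentially verbatim: the paper likewise invokes Theorem~\ref{thm:islocal!!}, then uses Lemma~\ref{lem:Egyptgraded} with the standard grading to see that Egyptian elements lie in $K$, and notes that nonzero constants are units hence Egyptian.
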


\begin{proof}
By Theorem~\ref{thm:islocal!!}, $R(D)$ is a local ring with maximal ideal $\m$ generated by all elements of the form $\frac 1f$ with $0 \neq f \in D$ non-Egyptian.  However, by Lemma~\ref{lem:Egyptgraded} (and using the standard grading on the polynomial ring), no nonconstant polynomial can be Egyptian.  Since the nonzero constant polynomials are units and hence Egyptian, the result follows.
%
\end{proof}

\begin{prop}\label{pr:downdim}
Let $0 \leq j < n$ be integers.  Then $R_n \cap F_j = R_j$ and $R_n^* \cap F_j = R_j^*$.
\end{prop}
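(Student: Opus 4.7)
The plan is to prove both equalities by first reducing them to a single core case. The inclusions $R_j \subseteq R_n \cap F_j$ and $R_j^* \subseteq R_n^* \cap F_j$ are immediate since $D_j \subseteq D_n$, so I focus on the reverse inclusions. Moreover, the second equality follows from the first by applying $\sigma_n$: since each $X_i$ with $i \leq j$ and its reciprocal $1/X_i$ lie in $F_j$, the involution $\sigma_n$ satisfies $\sigma_n(F_j) = F_j$ and restricts on $F_j$ to $\sigma_j$; combined with $\sigma_n(R_n) = R_n^*$ and $\sigma_n(R_j) = \sigma_j(R_j) = R_j^*$, this gives $R_n^* \cap F_j = \sigma_n(R_n \cap F_j) = \sigma_n(R_j) = R_j^*$. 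Finally, once one shows $R_m \cap F_{m-1} \subseteq R_{m-1}$ for every $m \geq 1$, iterating the inclusion $F_j \subseteq F_{m-1}$ reduces the general case stepwise: $R_n \cap F_j = (R_n \cap F_{n-1}) \cap F_j = R_{n-1} \cap F_j$, and descending induction on $n-j$ finishes. So the whole proposition reduces to proving $R_n \cap F_{n-1} \subseteq R_{n-1}$.

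For this core step I induct on the number of summands $t$ in a representation $\alpha = \sum_{i=1}^t 1/f_i$ with $f_i \in D_n$ of a given $\alpha \in R_n \cap F_{n-1}$. The base cases are immediate: $t=0$ gives $\alpha = 0$, and $t=1$ gives $1/\alpha = f_1 \in D_n \cap F_{n-1} = D_{n-1}$, so $\alpha \in R_{n-1}$. For the inductive step, assume $\alpha \neq 0$, set $d_i := \deg_{X_n}(f_i) \geq 0$ and $D := \sum_i d_i$, and clear denominators to get the identity
\[
\alpha \prod_{i=1}^t f_i \;=\; \sum_{i=1}^t \prod_{j\neq i} f_j
\]
in $F_{n-1}[X_n]$. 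The left-hand side has $X_n$-degree exactly $D$, with leading $X_n$-coefficient equal to $\alpha$ times the product of the $X_n$-leading coefficients of the $f_i$ (nonzero since $F_{n-1}[X_n]$ is a domain), while the $i$th summand on the right has $X_n$-degree $D - d_i$, so the right-hand side has $X_n$-degree at most $D - \min_i d_i$. Comparing forces $\min_i d_i = 0$, so some $f_i$ — say $f_1$ after relabeling — lies in $D_{n-1}$. Then $\alpha - 1/f_1 \in F_{n-1}$ admits a representation as a sum of $t-1$ reciprocals in $R_n$, and the inductive hypothesis gives $\alpha - 1/f_1 \in R_{n-1}$, hence $\alpha \in R_{n-1}$.

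I expect the main obstacle to be the degree comparison: one must verify carefully that the leading $X_n$-coefficient of the left-hand side is genuinely nonzero (using that $\alpha \neq 0$ and that $F_{n-1}[X_n]$ is an integral domain), and separate out the case $\alpha = 0$. Once this single degree inequality is in place, the induction on $t$, the descending iteration on $j$, and the transport from $R$ to $R^*$ via $\sigma_n$ are all routine.
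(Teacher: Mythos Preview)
Your proof is correct and takes essentially the same approach as the paper: both reduce to $j=n-1$, derive the second equality from the first via $\sigma_n(F_j)=F_j$, and use the same degree comparison in $F_{n-1}[X_n]$ after clearing denominators. The only difference is cosmetic: the paper separates out all the $f_i\in D_{n-1}$ at once and shows the remaining sum $\beta$ must vanish, whereas you peel off one $D_{n-1}$ term at a time via induction on $t$.
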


\begin{proof}
We need only prove the first statement, since it then follows that $R_n^* \cap F_j = \sigma(R_n) \cap F_j = \sigma(R_n \cap F_j) = \sigma(R_j) = R_j^*$.  Moreover, by an easy induction, we may assume $j=n-1$.

It is clear that $R_{n-1} \subseteq R_n \cap F_{n-1}$.  So let $\alpha \in R_n \cap F_{n-1}$.  Then $\alpha = \sum_{i=1}^t \frac 1 {f_i}$, where each $f_i \in D_n \setminus \{0\}$.  Reorder the $f_i$ such that $f_1, \ldots, f_s \in D_{n-1}$ and $f_{s+1}, \ldots, f_t \in D_n \setminus D_{n-1}$. Then for $1\leq i \leq s$, we have $\frac 1{f_i} \in R_{n-1} \subseteq R_n \cap F_{n-1}$.  Let $\beta = \alpha - \sum_{i=1}^s \frac 1 {f_i}$; then we have \begin{equation}\label{eq:beta}
\beta = \sum_{i=s+1}^t \frac 1{f_i}.
\end{equation}

Assume $\beta \neq 0$.  Then multiplying Equation~\ref{eq:beta} by $\prod_{i=s+1}^t f_i$, we have 
\[
\beta f_{s+1} \cdots f_t = \sum_{i=s+1}^t \prod_{\substack{j\neq i \\ j>s}} f_j.
\]
With respect to the polynomial ring $F_{n-1}[X_n]$, note that $\beta \in F_{n-1}$ and each $f_i$ for $i>s$ is a nonconstant polynomial.  Say $\deg f_i = d_i >0$ for each $i>s$.  Then the left hand side above has degree $\sum_{i=s+1}^t d_i$, whereas the right hand side has degree $\leq \max_i \{\sum_{j\neq i,\ j>s} d_j\} < \sum_{s=i+1}^t d_i$, a contradiction.  Hence $\beta=0$.  That is, $\alpha = \sum_{i=1}^t \frac 1{f_i} \in R_{n-1}$, since each $f_i \in D_{n-1}$.
%
%
%
\end{proof}

\begin{lemma}\label{lem:star}
Let $f \in D \setminus \{0\}$.  Then \[
\sigma\left(\frac 1f\right) = \frac{\mathbf X^{\mathbf a(f) + \mathbf t(f)}}{f^*},
\]
where $f^* \in D \setminus \bigcup_{i=1}^n X_i D$.  Moreover, $\mathbf a(f) = \mathbf a(f^*)$ and $f = \mathbf X^{\mathbf t(f)} f^{**}$.
\end{lemma}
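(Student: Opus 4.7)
The plan is to prove the lemma by direct monomial-level computation on the expansion of $f$. Write $f = \sum_\alpha c_\alpha \mathbf X^\alpha$, with $\alpha$ ranging over $\N^n$ and only finitely many $c_\alpha$ nonzero. Applying $\sigma$ term-by-term in $F$ yields $\sigma(f) = \sum_\alpha c_\alpha \mathbf X^{-\alpha}$. Writing $\mathbf d(f) := \mathbf a(f) + \mathbf t(f)$ for the coordinatewise multi-degree (which equals $\mathbf a(f) + \mathbf t(f)$ since $\deg_{X_i} f = t_i(f) + a_i(f)$), I would define
\[
f^* := \mathbf X^{\mathbf d(f)}\, \sigma(f).
\]
Then $\sigma(1/f) = 1/\sigma(f) = \mathbf X^{\mathbf d(f)}/f^* = \mathbf X^{\mathbf a(f) + \mathbf t(f)}/f^*$, which is the displayed formula as soon as I verify that $f^*$ has the asserted properties.

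Next I would check that $f^* \in D \setminus \bigcup_i X_i D$ and that $\mathbf a(f^*) = \mathbf a(f)$. The $i$th exponent in a monomial $c_\alpha \mathbf X^{\mathbf d(f) - \alpha}$ of $f^*$ is $d_i(f) - \alpha_i \geq 0$, since $\alpha_i \leq \deg_{X_i} f = d_i(f)$ for every $\alpha$ with $c_\alpha \neq 0$; hence $f^* \in D$. For each $i$, the definition of $\deg_{X_i}$ produces some $\alpha$ with $c_\alpha \neq 0$ and $\alpha_i = d_i(f)$, giving a monomial of $f^*$ with $X_i$-exponent zero, so $X_i \nmid f^*$. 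Consequently $t_i(f^*) = 0$ and $a_i(f^*) = \deg_{X_i}(f^*) = d_i(f) - \min\{\alpha_i : c_\alpha \neq 0\} = d_i(f) - t_i(f) = a_i(f)$, yielding $\mathbf a(f^*) = \mathbf a(f)$.

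For the final claim I would simply apply the formula just established with $f$ replaced by $f^*$. Since $\mathbf t(f^*) = 0$ and $\mathbf a(f^*) = \mathbf a(f)$, the formula gives $\sigma(1/f^*) = \mathbf X^{\mathbf a(f)}/f^{**}$, whence $f^{**} = \mathbf X^{\mathbf a(f)}\, \sigma(f^*)$. Using that $\sigma$ is an involution, $\sigma(f^*) = \sigma(\mathbf X^{\mathbf d(f)})\, \sigma(\sigma(f)) = \mathbf X^{-\mathbf d(f)} f$, so $f^{**} = \mathbf X^{\mathbf a(f) - \mathbf d(f)} f = \mathbf X^{-\mathbf t(f)} f$, which rearranges to $f = \mathbf X^{\mathbf t(f)} f^{**}$. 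Everything is mechanical once the monomial bookkeeping is in place; the only identification requiring care is $\min\{\alpha_i : c_\alpha \neq 0\} = t_i(f)$, which is immediate from the definition of $t_i$, so no substantive obstacle arises.
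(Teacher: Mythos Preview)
Your proof is correct and follows essentially the same monomial-bookkeeping approach as the paper's own argument. The only difference is organizational: the paper first treats the case $\mathbf t(f)=\mathbf 0$ and then reduces the general case to it by writing $f=\mathbf X^{\mathbf t(f)}f_0$ and setting $f^*:=f_0^*$, whereas you handle both at once via the uniform definition $f^*:=\mathbf X^{\mathbf d(f)}\sigma(f)$; these definitions coincide, and your use of the involution $\sigma\circ\sigma=1_F$ to recover $f^{**}$ is a slight streamlining of the paper's direct recomputation.
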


\begin{proof}
First suppose $f \in D \setminus \bigcup_{i=1}^n X_iD$, so that $\mathbf t(f) = \mathbf 0$.  Write $f = \sum_{\mathbf j \in \N_0^n} u_{\mathbf j} \mathbf X^{\mathbf j}$, where each $u_{\mathbf j} \in K$ and $u_{\mathbf j} = 0$ for all but finitely many $n$-tuples $\mathbf j$.  Then $a_i(f) = \deg_{X_i}(f) = \max \{c \mid \exists \mathbf j$ with $j_i = c$ and $u_{\mathbf j} \neq 0\}$ for each $1\leq i \leq n$.  We have \[
\sigma\left(\frac 1f\right) = \frac 1 {\sum_{\mathbf j} u_{\mathbf j} / \mathbf X^{\mathbf j}} = \frac{\mathbf X^{\mathbf a(f)}}{\sum_{\mathbf j} u_{\mathbf j} X^{\mathbf a(f) - \mathbf j}}.
\]
Let $f^*$ denote the expression in the denominator above.  Note that $\mathbf a(f) - \mathbf j \in \N_0^n$ whenever $u_{\mathbf j} \neq 0$, since for each such $\mathbf j$ we have $j_i \leq a_i(f)$ for all $1\leq i \leq n$.  Hence $f^*$ is a true polynomial.  Moreover, for each $i$, since $X_i \nmid f$, there is some $\mathbf j$ with $u_{\mathbf j} \neq 0$ and $j_i=0$, and hence $a_i(f) - j_i = a_i(f)$.  Thus, $a_i(f^*) = a_i(f)$. Finally, for each $i$, there is some $\mathbf j$ with $u_{\mathbf j} \neq 0$ and $j_i = a_i(f)$. Hence $a_i(f) - j_i = 0$, so $X_i \nmid f^*$, whence $f_i^* \in D \setminus \bigcup_{i=1}^n X_i D$.

For the final claim, we have \[
\sigma\left( \frac 1 {f^*}\right) = \frac {\mathbf X^{\mathbf a(f^*)}}{\sum_{\mathbf j} u_{\mathbf j} \mathbf X^{\mathbf a(f^*) - (\mathbf a(f) - \mathbf j)}} = \frac{\mathbf X^{\mathbf a(f^*)}}{\sum_{\mathbf j} u_{\mathbf j} \mathbf X^{\mathbf j}} = \frac{\mathbf X^{\mathbf a(f^*)}}f.
\]

Now we go to the general case, where $\mathbf t(f)$ is not necessarily the zero vector.  We have $f = \mathbf X^{\mathbf t(f)} f_0$, where $f_0 \in D \setminus \bigcup_{i=1}^n X_iD$.  Then \[
\sigma\left(\frac 1f\right) = \left(\prod_{i=1}^n \sigma\left(\frac 1{X_i}\right)^{t_i(f)}\right) \sigma\left(\frac 1{f_0}\right) = \mathbf X^{\mathbf t(f)} \sigma\left(\frac 1{f_0}\right) = \frac{\mathbf X^{\mathbf a(f_0) + \mathbf t(f)}}{f_0^*}.
\]
Moreover, $a_i(f) = \deg_{X_i}(f) - t_i(f) = \deg_{X_i}(f_0) = a_i(f_0)$ for each $1\leq i \leq n$, so $\mathbf a(f) = \mathbf a(f_0)$.  Setting $f^* = f_0^*$, we have $f^{**} = f_0^{**} = f_0$, so that $f = \mathbf X^{\mathbf t(f)} f_0 = \mathbf X^{\mathbf t(f)} f_0^{**}$, completing the proof.
\end{proof}

\begin{lemma}
\label{Xiprimes}
We have $R^* \subseteq D_{(X_i)}$ for each $1\leq i \leq n$. In particular there are $n$ distinct height one prime ideals $\p_i$ of $R^*$ obtained as centers of the $X_i$-adic valuations of $D$, and $R^*_{\p_i} = D_{(X_i)}$. 
\end{lemma}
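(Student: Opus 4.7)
My plan is a sandwich argument: establish $D \subseteq R^* \subseteq D_{(X_i)}$ and then read off everything from the fact that $D_{(X_i)}$ is a DVR. For the upper containment $R^* \subseteq D_{(X_i)}$, I will exploit the generating set of $R^* = \sigma(R)$, namely the images $\sigma(1/f)$ for nonzero $f \in D$. Lemma~\ref{lem:star} expresses each such generator as $\mathbf{X}^{\mathbf{a}(f)+\mathbf{t}(f)}/f^*$ with $f^* \in D \setminus \bigcup_{j=1}^n X_j D$; since in particular $X_i \nmid f^*$, the denominator is a unit in $D_{(X_i)}$, so the whole expression lies in $D_{(X_i)}$. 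For the lower containment $D \subseteq R^*$, I note that $\sigma$ is a $K$-algebra automorphism of $F$, so $K \subseteq R^*$ and $X_j = \sigma(1/X_j) \in R^*$ for each $j$, whence $K[X_1, \ldots, X_n] \subseteq R^*$.

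Having the sandwich, I then set $\p_i := R^* \cap X_i D_{(X_i)}$, which by definition is the center in $R^*$ of the $X_i$-adic valuation and is automatically prime as a contraction of a prime. Distinctness of the $\p_i$ is witnessed by the $X_j$: since each $X_j \in R^*$ and $v_{X_i}(X_j) = \delta_{ij}$, one has $X_j \in \p_i$ if and only if $i = j$. For the identification $R^*_{\p_i} = D_{(X_i)}$, the inclusion $R^*_{\p_i} \subseteq D_{(X_i)}$ follows by localizing $R^* \subseteq D_{(X_i)}$, since every element of $R^* \setminus \p_i$ has $X_i$-adic valuation zero and is therefore already a unit of $D_{(X_i)}$. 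Conversely, any $g/h \in D_{(X_i)}$ with $g, h \in D$ and $X_i \nmid h$ has $g, h \in R^*$ by the lower containment, and $h \notin \p_i$ because $v_{X_i}(h) = 0$; hence $g/h \in R^*_{\p_i}$.

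The height-one claim then comes for free, since $R^*_{\p_i} = D_{(X_i)}$ is a DVR, so $\p_i R^*_{\p_i}$ is its unique nonzero prime and $\p_i$ must therefore have height one in $R^*$. The only substantive input in the entire argument is Lemma~\ref{lem:star}, which guarantees that the denominator of $\sigma(1/f)$ has $X_i$-adic valuation zero; everything else is formal manipulation of the inclusions. I do not anticipate any genuine obstacle.
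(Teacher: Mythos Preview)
Your proposal is correct and follows essentially the same route as the paper: both use Lemma~\ref{lem:star} to place the generators $\sigma(1/f)$ in $D_{(X_i)}$, use the elementary containment $D \subseteq R^*$ (via $X_j = \sigma(1/X_j)$), distinguish the $\p_i$ by examining where the $X_j$ land, and read off $R^*_{\p_i} = D_{(X_i)}$ and the height-one claim from the DVR structure. Your observation that $X_i \nmid f^*$ immediately makes $f^*$ a unit in $D_{(X_i)}$ is in fact slightly cleaner than the paper's inequality $v_i(f^*) \leq \deg_{X_i}(f^*)$, but the substance is identical.
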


\begin{proof}
Choose $i$ with $1\leq i \leq n$.  Let $f\in D$. 
Let $v_i$ be the $X_i$-adic valuation function.  Then by Lemma~\ref{lem:star}, $v_i(\sigma(1/f)) = t_i(f) + a_i(f) - v_i(f^*) \geq 0$, as $a_i(f) = \deg_{X_i}(f^*)$, and $v_i(f^*)$ cannot exceed the $X_i$-degree of $f^*$. Since every nonzero element $\alpha \in R^*$ is a sum of terms of the form $\sigma(1/f)$, it follows that $v_i(R^*) \geq 0$.

Now let $1 \leq i < j \leq n$ and let $\p_i$, $\p_j$ be the centers of the $X_i$ and $X_j$-adic valuations on $D$ in $R^*$, respectively.  Since $v_i(X_i)=1$ but $v_j(X_i)=0$, we have $X_i \in \p_i \setminus \p_j$. Similarly, $v_i(X_j) = 0$ and $v_j(X_j)=1$, so $X_j \in \p_j \setminus \p_i$.

For the final claim, let $\alpha \in D_{(X_i)}$.  Then $\alpha=f/g$ for some $f\in D$ and $g \in D \setminus X_iD$.  If $g \in \p_i$, then $g \in X_iD_{(X_i)} \cap D = X_i D$, which is a contradiction.  Hence, $f \in R^*$ and $g \in R^* \setminus \p_i$, so $f/g \in R^*_{\p_i}$.  Thus, $R^*_{\p_i} = D_{(X_i)}$, whence $\hgt \p_i =1$.
\end{proof}

\begin{lemma}\label{lem:Xinallprimes}
Let $\p$ be a nonzero prime ideal of $R$.    Then for some $1\leq i \leq n$, $1/X_i \in \p$.
\end{lemma}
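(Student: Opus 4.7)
The plan is to invoke Proposition~\ref{pr:Gdom} directly, applied to $T = D = K[X_1,\ldots,X_n]$ with generators $f_i = X_i$. That proposition immediately gives $\frac{1}{X_1 X_2 \cdots X_n} \in \p$ for every nonzero prime $\p$ of $R$.

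Next, I would observe the elementary factorization
\[
\frac{1}{X_1 X_2 \cdots X_n} \;=\; \prod_{i=1}^{n} \frac{1}{X_i},
\]
where each factor $\frac{1}{X_i}$ lies in $R$ by the very definition of the reciprocal complement (since $X_i \in D \setminus\{0\}$). Because $\p$ is prime and contains this product, at least one factor $\frac{1}{X_i}$ must lie in $\p$, which is the desired conclusion.

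There is essentially no obstacle here beyond correctly citing Proposition~\ref{pr:Gdom}; the lemma is a one-line corollary of it combined with the primality of $\p$. The only thing worth flagging is that we are really using both conclusions of Proposition~\ref{pr:Gdom}: the identity $R[\prod X_i] = \Frac R$ is what makes $\prod X_i$ generate the fraction field over $R$, and the appeal to \cite[Theorem~19]{Kap-CR} there is what forces $\frac{1}{\prod X_i}$ into every nonzero prime — but since we are permitted to quote Proposition~\ref{pr:Gdom} as a black box, no further work is required.
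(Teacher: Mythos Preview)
Your proof is correct and follows exactly the same approach as the paper's: invoke Proposition~\ref{pr:Gdom} to get $\frac{1}{\prod_i X_i} \in \p$, factor it as $\prod_i \frac{1}{X_i}$, and use primality of $\p$.
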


\begin{proof}
By Proposition~\ref{pr:Gdom}, $1/(\prod_{i=1}^n X_i) = \prod_{i=1}^n (1/X_i) \in \p$.  Since $\p$ is prime, some $1/X_i \in \p$.
\end{proof}

\begin{lemma}
\label{nonzeroconstantterm}
Let $f \in D$. If $f(\mathbf 0) \neq 0$ then $f$ is a unit in $R^*$. In particular \[
K[X_1, \ldots, X_n]_{(X_1, \ldots, X_n)} \subseteq R^*.\]
\end{lemma}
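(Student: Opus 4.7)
The plan is to apply Lemma~\ref{lem:units} directly, after reinterpreting $R^*$ itself as a reciprocal complement of a polynomial ring. The key observation is that $R^* = R(T)$, where $T := K[1/X_1, \dotsc, 1/X_n] = \sigma(D)$: the involution $\sigma$ restricts to a bijection $D \setminus \{0\} \to T \setminus \{0\}$ and sends each generator $1/d$ of $R$ to $1/\sigma(d)$, a typical generator of $R(T)$.

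To set up Lemma~\ref{lem:units} for $T$, I would set $Y_i := 1/X_i$, so $T = K[Y_1, \dotsc, Y_n]$ is an $\N$-graded polynomial $K$-algebra with $T_0 = K$. Lemma~\ref{lem:Egyptgraded} then guarantees that all Egyptian elements of $T$ lie in $K$, which is precisely the hypothesis needed.

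Next, given $f \in D$ with $f(\mathbf{0}) \neq 0$, I would expand
\[
f \;=\; c_{\mathbf{0}} + \sum_{|\alpha| \geq 1} c_\alpha \mathbf{X}^\alpha \;=\; c_{\mathbf{0}} + \sum_{|\alpha| \geq 1} \frac{1}{\mathbf{Y}^\alpha/c_\alpha},
\]
where the sum is finite, $c_{\mathbf{0}} = f(\mathbf{0}) \in K \setminus \{0\}$, and each $\mathbf{Y}^\alpha / c_\alpha$ with $|\alpha| \geq 1$ lies in $T \setminus K$ (being a nonzero scalar multiple of a monomial of positive $Y$-degree). Lemma~\ref{lem:units}, applied inside $R(T) = R^*$, then concludes that $f$ is a unit in $R^*$. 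For the ``in particular'' clause, any element of $K[X_1, \dotsc, X_n]_{(X_1, \dotsc, X_n)}$ can be written as $f/g$ with $f, g \in D$ and $g(\mathbf{0}) \neq 0$; since $X_i = \sigma(1/X_i) \in R^*$ we have $D \subseteq R^*$, so $f \in R^*$ and $g$ is a unit in $R^*$ by what was just shown, whence $f/g \in R^*$.

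I do not anticipate any real obstacle: the substantive insight is the opening identification $R^* = R(T)$ for $T = K[1/X_1, \dotsc, 1/X_n]$, which recasts the statement as a single direct invocation of Lemma~\ref{lem:units}, after which everything is formal.
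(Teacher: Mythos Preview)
Your argument is correct, and it takes a genuinely different route from the paper's. The paper proves Lemma~\ref{nonzeroconstantterm} by induction on $n$: assuming $f$ lies in some prime $\p$ of $R^*$, it invokes Lemma~\ref{lem:Xinallprimes} (hence implicitly Proposition~\ref{pr:Gdom}) to find some $X_i\in\p$, writes $f=X_ig+h$ with $h\in D_{n-1}$ having nonzero constant term, and applies the inductive hypothesis to $h$ to derive $1\in\p$. Your approach instead rests on the single observation that $R^*=\sigma(R(D))=R(\sigma(D))=R(K[Y_1,\dotsc,Y_n])$ with $Y_i=1/X_i$, after which $f=c_{\mathbf 0}+\sum_{|\alpha|\geq 1,\,c_\alpha\neq 0} 1/(\mathbf Y^\alpha/c_\alpha)$ lands exactly in the setting of Lemma~\ref{lem:units} (via Lemma~\ref{lem:Egyptgraded}). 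This is shorter and more self-contained: it avoids the induction, the contradiction argument, and the appeal to Lemma~\ref{lem:Xinallprimes}. The paper's proof, on the other hand, illustrates how prime ideals of $R^*$ interact with the variables and how $R^*_{n-1}$ sits inside $R^*_n$, themes used elsewhere. One small cosmetic point: you should make explicit that the sum is over those $\alpha$ with $c_\alpha\neq 0$, so that each $\mathbf Y^\alpha/c_\alpha$ is indeed defined.
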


\begin{proof}
We prove this by induction on $n$. If $n=0$, then the result is vacuous.  Thus, let $n\geq 1$ and assume the result true for smaller $n$.

By way of contradiction suppose that $f \in \p$ for a prime ideal $\p$ of $R^*$.
By Lemma~\ref{lem:Xinallprimes}, some $X_i \in \p$; without loss of generality assume $i=n$, so that $X_n \in \p$.  Then $f = X_n g + h$ for some $g \in D_n$ and $h \in D_{n-1} \setminus (X_1, \ldots, X_{n-1})D_{n-1}$.  Then $h \in \p \cap D_{n-1} \setminus (X_1, \ldots, X_{n-1})D_{n-1}$, so that $1/h \in K[X_1, \ldots, X_{n-1}]_{(X_1, \ldots, X_{n-1})} \subseteq R_{n-1}^*$ by inductive hypothesis.  Hence also $1/h \in R^*$.  But then $1 = (1/h)(f-X_n g) \in \p$, a contradiction.
\end{proof}

\begin{lemma}\label{lem:ordervaluation}
Let $w$ be the \emph{order valuation} of $D_{(X_1, \ldots, X_n)}$ on $F$ -- i.e., the unique valuation on $F$ such that for any nonzero $g \in D$, $w(g) = \max\{j \mid g \in (X_1, \ldots, X_n)^j\}$.  Let $(W, \m_W)$ be the corresponding DVR.  
Then $R^* \subseteq W$, and $\m_W \cap R^*$ is the maximal ideal $\m$ of $R^*$.
\end{lemma}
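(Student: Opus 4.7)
The statement splits into two assertions, $R^* \subseteq W$ and $\m_W \cap R^* = \m$, which I plan to handle in sequence. For the first, the key observation is that $R^*$ is generated as a ring by $K$ together with the elements $\sigma(1/f)$ for $f \in D \setminus \{0\}$, and $w$ is a valuation, so it suffices to verify $w(\sigma(1/f)) \geq 0$ for every nonzero $f$.

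The key computation will invoke Lemma~\ref{lem:star}: writing $\sigma(1/f) = \mathbf X^{\mathbf a(f) + \mathbf t(f)}/f^*$, I get
\[
w(\sigma(1/f)) = \sum_{i=1}^n (a_i(f) + t_i(f)) - w(f^*).
\]
Now $w(f^*)$ equals the minimum total degree of a monomial of $f^*$, and every such monomial $\mathbf X^{\mathbf j}$ satisfies $j_i \leq a_i(f^*) = a_i(f)$ for each $i$. Consequently the total degree of each monomial of $f^*$, and in particular $w(f^*)$, is bounded above by $\sum_i a_i(f)$. This yields $w(\sigma(1/f)) \geq \sum_i t_i(f) \geq 0$, establishing the containment $R^* \subseteq W$.

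For the ideal identity, the inclusion $\m_W \cap R^* \subseteq \m$ is the easy half: since $R^*$ is local with maximal ideal $\m$ (Theorem~\ref{thm:islocalpoly}), any $u \in R^* \setminus \m$ is a unit, so $u^{-1} \in R^* \subseteq W$, and additivity of $w$ under multiplication forces $w(u) = 0$, whence $u \notin \m_W$. The reverse inclusion $\m \subseteq \m_W$ requires the strict inequality $w(\sigma(1/f)) > 0$ for every nonconstant $f$, which I plan to establish by cases on whether $\mathbf t(f) = \mathbf 0$. If some $t_i(f) > 0$, the displayed bound already gives $w(\sigma(1/f)) \geq \sum_i t_i(f) \geq 1$. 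If $\mathbf t(f) = \mathbf 0$, then $f = f^*$ is nonconstant with no $X_i$ factor, so some $a_i(f) > 0$; since $X_i \nmid f^*$, some monomial $\mathbf X^{\mathbf j}$ of $f^*$ has $j_i = 0$, and its total degree is at most $\sum_{k \neq i} a_k(f) < \sum_k a_k(f)$, forcing $w(f^*) < \sum_i a_i(f)$ and hence $w(\sigma(1/f)) > 0$.

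The only real subtlety I anticipate is in the second case above, where the strict inequality must be extracted from the single fact $X_i \nmid f^*$ together with the positivity of some $a_i(f)$; this is the one place the argument depends on the explicit shape of $f^*$ supplied by Lemma~\ref{lem:star}, rather than just on gross numerical invariants of $f$, and it is the main hurdle of the proof.
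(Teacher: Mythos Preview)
Your approach is essentially the same as the paper's: both arguments use Lemma~\ref{lem:star} to write $\sigma(1/f) = \mathbf X^{\mathbf a(f) + \mathbf t(f)}/f^*$, bound $w(f^*)$ above by $\sum_i a_i(f)$, and then extract the strict inequality $w(f^*) < \sum_i a_i(f)$ in the case $\mathbf t(f) = \mathbf 0$ by locating a monomial of $f^*$ missing some variable $X_i$ with $a_i(f)>0$. The only organizational difference is that the paper proves $w(\sigma(1/f)) \geq 1$ for nonconstant $f$ first and deduces both assertions together, whereas you separate the $\geq 0$ and $>0$ claims; this is cosmetic.

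One small slip: when $\mathbf t(f) = \mathbf 0$ you write ``then $f = f^*$,'' but this is false in general. For instance $f = 1 + X_1 + X_2^2$ gives $f^* = X_1 X_2^2 + X_2^2 + X_1$. What Lemma~\ref{lem:star} actually provides is $f^{**} = f$ together with $f^* \in D \setminus \bigcup_i X_i D$ and $\mathbf a(f^*) = \mathbf a(f)$. Fortunately your argument never uses the equality $f = f^*$: the conclusion ``some $a_i(f) > 0$'' comes from $f$ being nonconstant with $\mathbf t(f)=\mathbf 0$, and the facts ``$X_i \nmid f^*$'' and ``each exponent in $f^*$ is bounded by $a_k(f)$'' come directly from Lemma~\ref{lem:star}. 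Once you delete the parenthetical ``$f = f^*$'' the proof stands as written.
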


\begin{proof}
By construction $D \subseteq W$ and $X_i \in \m_W$ for all $i$, so that $w(X_i)>0$.  We first show that for any nonconstant polynomial $f$ not divisible by any of the variables,  $w(\sigma(1/f)) \geq 1$. Under these assumptions,  $\sigma(1/f) = \frac{\mathbf X^{\mathbf a(f)}}{f^*}$ by Lemma~\ref{lem:star}.  Note that there is some $i$ and some monomial $m$ in $f^*$ such that $X_i^{a_i(f)} \nmid m$.  Thus, $w(f^*) \leq w(m) \leq (a_i(f) - 1) + \sum_{j\neq i} a_j(f) < \sum_j a_j(f)$.  Thus, $w(\sigma(1/f)) = \sum_j a_j(f) - w(f^*) >0$.

For a general nonconstant polynomial $f$, we have $f = \mathbf X^{\mathbf t(f)}f_0$ where $f_0$ is not a multiple of any of the $X_i$, and if $f_0$ is constant then some $t_i(f)>0$.  Hence, $w(\sigma(1/f)) = \sum_{i=1}^n t_i(f) w(X_i) + w(\sigma(1/f_0))>0$.

Now let $\alpha \in R^*$.  Write $\alpha = u + \sum_{j=1}^t \sigma\left(\frac 1{f_j}\right)$, where $u \in K$ and each $f_j \in D \setminus K$.  Since $K \subseteq D \subseteq W$, by the above we have $\alpha \in W$, whence $R^* \subseteq W$.  If $u=0$ then $w(\alpha) \geq \min\{w(\sigma(1/f_i) \mid 1\leq i \leq t\} >0$, so that $\alpha \in \m_W$.  Thus by Theorem~\ref{thm:islocalpoly}, $\m \subseteq \m_W \cap R^*$, but then since $\m$ is maximal, the result follows.
\end{proof}




The following result must be well-known but we provide a proof for the convenience of the reader.

\begin{lemma}\label{lem:atomiccenter}
Let $(T,\m)$ be a local integral domain with fraction field $F$, and let $(V,\n)$ be a discrete rank one valuation ring such that $T \subseteq V \subseteq F$ and $\n \cap T = \m$.  Then $T$ is atomic, and any $x \in \m \setminus \n^2$ is an irreducible element of $T$.
\end{lemma}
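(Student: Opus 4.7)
The key tool is the valuation $v \colon F^\times \to \Z$ associated with $V$ (normalized so $v(V \setminus \{0\}) = \N_0$). Since $T \subseteq V$, every nonzero $t \in T$ satisfies $v(t) \geq 0$, and the hypothesis $\n \cap T = \m$ translates into the crucial equivalence: for a nonzero $t \in T$,
\[
v(t) \geq 1 \iff t \in \m \iff t \text{ is a nonunit of } T.
\]
I would prove this equivalence at the outset and use it repeatedly.

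For the irreducibility claim, I would first observe that $x \in \m \setminus \n^2$ forces $v(x) = 1$ (since $\m \subseteq \n$ gives $v(x) \geq 1$, while $x \notin \n^2$ gives $v(x) \leq 1$). Then for any factorization $x = ab$ with $a, b \in T$, additivity gives $v(a) + v(b) = 1$ with both summands nonnegative, so one of them, say $v(a)$, equals $0$. By the equivalence above, $a \notin \m$, hence $a$ is a unit in $T$ since $T$ is local. Thus $x$ is irreducible.

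For atomicity I would proceed by strong induction on $n := v(y)$, where $y$ is a nonzero nonunit of $T$. Since $y \in \m$, we have $n \geq 1$. The base case $n = 1$ means $y \in \m \setminus \n^2$, so $y$ is irreducible by the previous paragraph. For the inductive step $n > 1$: if $y$ happens to be irreducible there is nothing to show; otherwise write $y = ab$ with neither $a$ nor $b$ a unit of $T$. Then $a, b \in \m$, so $v(a), v(b) \geq 1$, and since $v(a) + v(b) = n$ we have $v(a), v(b) < n$. The inductive hypothesis provides factorizations of $a$ and $b$ into irreducibles, and concatenating these yields such a factorization of $y$.

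There is no real obstacle here; the only point requiring the hypotheses of the lemma is the translation between the valuation and the maximal ideal $\m$, which is exactly what $\n \cap T = \m$ and the localness of $T$ provide. In particular, the argument does not use that $V$ is discrete in any essential way beyond ensuring the value group is $\Z$, so that the induction on $v(y)$ terminates.
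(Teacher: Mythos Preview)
Your proof is correct and follows essentially the same approach as the paper: both establish irreducibility of elements with $v(x)=1$ via additivity of the valuation, and both derive atomicity by exploiting that a proper factorization strictly decreases valuation. The only cosmetic difference is that the paper phrases the atomicity argument as a minimal-counterexample argument rather than strong induction on $v(y)$, but these are the same proof.
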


\begin{proof}
We begin with the second statement.  Let $x\in \m \setminus \n^2$.  Write $x=st$ with $s, t \in T$ and $s$ not a unit.  Then if $v$ is the valuation function of $v$, we have $v(s) \geq 1$, so that $1 = v(x) = v(st) = v(s) + v(t) \geq 1+v(t)$, whence $v(t) = 0$, so that $t \in T \setminus \n = T \setminus \m$ and is thus a unit.  Thus, $x$ is irreducible.

For the first statement, let $A = \{x \in \m \mid x $ cannot be written as a product of irreducible elements$\}.$  If $A \neq \emptyset$, choose $a\in A$ such that $v(a) \leq v(b)$ for all $b\in A$.  Since $a$ is not irreducible, we may write $a=bc$ for some nonunits $b,c \in T$; hence $b,c \in \m$.  But then $v(c) \geq 1$, so $v(b) = v(a)-v(c) < v(a)$.  Thus, $b \notin A$, so $b$ can be written as a product of irreducible elements.  By the same argument, the same holds for $c$.  Hence, $a=bc$ is also a product of irreducible elements, which is a contradiction.  Thus, $A = \emptyset$, so every element of $T$ is either a unit or a product of irreducibles.  That is, $T$ is atomic.
\end{proof}

\begin{thm}\label{thm:atomic}
The ring $R$ is atomic.  That is, every nonzero nonunit element factors into a product of irreducible elements.
\end{thm}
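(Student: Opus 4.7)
The plan is to invoke Lemma~\ref{lem:atomiccenter} directly, using the data already accumulated in the preceding lemmas. Since $R \cong R^*$ via $\sigma$, it suffices to prove that $R^*$ is atomic. I will therefore work with $R^*$ and the order valuation ring $(W, \m_W)$ from Lemma~\ref{lem:ordervaluation}.

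First, I would verify that the hypotheses of Lemma~\ref{lem:atomiccenter} hold with $T = R^*$ and $V = W$. By Theorem~\ref{thm:islocalpoly}, $R$ is local, so $R^*$ is a local integral domain with some maximal ideal $\m$. Both $R^*$ and $W$ sit inside the fraction field $F = K(X_1, \ldots, X_n)$ of $D$, which is also the fraction field of $R^*$ (the reverse containment $\Frac R^* \subseteq F$ is automatic, and $F \subseteq \Frac R^*$ follows from Proposition~\ref{pr:Gdom} applied after $\sigma$, or simply from the fact that $1/X_i \in R$ and $X_i = \sigma(1/X_i) \in R^*$ combined with $R^* \ni 1/f$-type elements). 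Lemma~\ref{lem:ordervaluation} gives the two remaining facts: $R^* \subseteq W$ and $\m_W \cap R^* = \m$. Finally, $W$ is a discrete rank one valuation ring by construction, since it is the valuation ring of the order valuation on the regular local ring $D_{(X_1, \ldots, X_n)}$.

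With these hypotheses in place, Lemma~\ref{lem:atomiccenter} immediately yields that $R^*$ is atomic, and hence so is $R$. I do not anticipate an obstacle here, since all the real work has been done in Lemmas~\ref{lem:ordervaluation} and~\ref{lem:atomiccenter}: the former establishes that the maximal ideal of $R^*$ is controlled by a genuine rank one discrete valuation, and the latter extracts atomicity from that control via a straightforward descent on the valuation of a hypothetical non-factoring element. The only point worth stating explicitly in the write-up is the transfer from $R^*$ back to $R$ through the isomorphism $\sigma$, which is a formality.
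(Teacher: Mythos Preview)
Your proposal is correct and matches the paper's proof exactly: work in $R^*$, invoke Lemma~\ref{lem:ordervaluation} to see that the maximal ideal of $R^*$ is the center of a rank one discrete valuation, and then apply Lemma~\ref{lem:atomiccenter}. The paper's write-up is simply a terser version of what you have outlined.
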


\begin{proof}
Since $R \cong R^*$, we may work with $R^*$. By Lemma~\ref{lem:ordervaluation}, the maximal ideal of $R^*$ is the center of a rank 1 discrete valuation. The result then follows from Lemma~\ref{lem:atomiccenter}.
\end{proof}

\begin{rem}\label{rem:notUFD}
However, $R$ is not a UFD provided $n\geq 2$.  To see this (and working in $R^*$), use the labels $X := X_1$ and $Y := X_2$, and first note that $s := \sigma(1/(X+Y)) = XY / (X+Y) \in R^*$, and also that $t:= X^2/(X+Y) = X-s$, $u := Y^2/(X+Y) = Y-s \in R^*$.  But each of $s,t,u$ has value 1 in the order valuation $w$ from Lemma~\ref{lem:ordervaluation}, hence must be irreducible elements of $R^*$ by Lemma~\ref{lem:atomiccenter}.

Then we have $\frac{X^2 Y^2}{(X+Y)^2} = s^2 = tu$, so if $R^*$ were a UFD, $s$ would be associate to either $t$ or $u$.  But if $s$ is an associate of $t$, then $s \in tR^*$, which implies that $\frac YX \in R^*$.  And if $s$ is an associate of $u$, then $u \in sR^*$, which again implies that $\frac YX \in R^*$.  So in either case we obtain $X/Y = \sigma(Y/X) \in R = R(D) \subseteq R(K(X_3, \ldots, X_n)[X, Y])$, contradicting \cite[Example 2.9]{nme-Euclidean}.
\end{rem}

\section{Dimension}

In this section, we show that $R$ has the same Krull dimension as $D$ (see Theorem~\ref{thm:dim}).

\begin{lemma}\label{lem:localizingatonevariable}
For any $1 \leq i \leq n$, we have \[
R[X_i] = R(K(X_i)[X_1, \ldots, \widehat{X_i}, \ldots, X_n]).
\]
Hence, $R^*[X_i^{-1}]$ is isomorphic to the \rco\ of the polynomial ring in $n-1$ variables over a field.
\end{lemma}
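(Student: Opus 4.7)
The plan is to prove the displayed equation first (without loss of generality taking $i=n$), by establishing the two containments of subrings of $F_n = K(X_1,\ldots,X_n)$, and then deduce the isomorphism in the second sentence by transporting the first equation through the involution $\sigma$.

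For the containment $R[X_n] \subseteq R(K(X_n)[X_1,\ldots,X_{n-1}])$, I would observe that both rings live in $F_n$, and that the right-hand side is the \rco\ of a polynomial ring in the $n-1$ variables $X_1,\ldots,X_{n-1}$ over the field $K(X_n)$. Every generator $1/f$ of $R$ (with $f\in D_n\setminus\{0\}$) lies in the right-hand side because $f$ itself lies in the larger ring $K(X_n)[X_1,\ldots,X_{n-1}]$. It remains to show that $X_n$ itself lies in the right-hand side; but $X_n = 1/(1/X_n)$ and $1/X_n$ is a nonzero element of $K(X_n)\subseteq K(X_n)[X_1,\ldots,X_{n-1}]$, so $X_n$ is the reciprocal of a reciprocal in that ring.

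For the reverse containment, I would take an arbitrary generator $1/g$ of $R(K(X_n)[X_1,\ldots,X_{n-1}])$, with $0\ne g\in K(X_n)[X_1,\ldots,X_{n-1}]$. The key step is to clear denominators in $X_n$: there exists some nonzero $h(X_n)\in K[X_n]$ such that $f := h(X_n)\cdot g$ lies in $D_n = K[X_1,\ldots,X_n]$. Then
\[
\frac{1}{g} \;=\; h(X_n)\cdot \frac{1}{f}.
\]
Since $1/f\in R$ and $h(X_n)\in K[X_n]\subseteq R[X_n]$, this forces $1/g\in R[X_n]$, finishing the first sentence. (I do not anticipate a real obstacle here; the whole argument is essentially the observation that localizing the polynomial ring $D_n$ at the multiplicative set $K[X_n]\setminus\{0\}$ is compatible with taking the \rco, and the generation statement for $R$ makes this transparent.)

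For the final sentence, I would apply the involution $\sigma$. Since $\sigma(R) = R^*$ and $\sigma(X_n) = 1/X_n$, applying $\sigma$ to $R[X_n]$ yields $R^*[X_n^{-1}]$, so $\sigma$ restricts to a ring isomorphism between $R[X_n]$ and $R^*[X_n^{-1}]$. Combined with the equation already proved, this gives
\[
R^*[X_n^{-1}] \;\cong\; R\bigl(K(X_n)[X_1,\ldots,X_{n-1}]\bigr),
\]
the latter being the \rco\ of a polynomial ring in $n-1$ variables over the field $K(X_n)$, as desired. The same argument for arbitrary $i$ follows by symmetry among the variables.
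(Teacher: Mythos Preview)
Your proof is correct and essentially identical to the paper's: both reduce to $i=n$, handle the forward containment via $X_n = 1/(1/X_n)$ and $D_n \subseteq K(X_n)[X_1,\ldots,X_{n-1}]$, and handle the reverse by clearing $X_n$-denominators to write $1/g = h(X_n)\cdot(1/f)$ with $f\in D_n$ and $h\in K[X_n]\subseteq R[X_n]$. The paper leaves the ``Hence'' clause implicit, whereas you spell out the transport via $\sigma$; this is exactly the intended justification.
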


\begin{proof}
Without loss of generality set $i=n$.  Set $S := R(K(X_n)[X_1, \ldots, X_{n-1}])$.

For the forward containment, first note that $X_n = \frac 1{1/X_n} \in S$ since $1/X_n \in K(X_n)[X_1, \ldots, X_{n-1}]$.  Moreover, since $D_n \subseteq K(X_n)[X_1, \ldots, X_{n-1}]$ and $R(-)$ preserves containment, we have $R \subseteq S$.  Thus, $R[X_n] \subseteq S$.

For the reverse, let $0 \neq f \in K(X_n)[X_1, \ldots, X_{n-1}]$.  By finding a common denominator to the $K(X_n)$-coefficients of the monomials in $X_1, \ldots, X_{n-1}$, we may write $f=g/h$, where $g \in D_n$ and $h \in K[X_n]$.  Write $h = \sum_{i=0}^t c_i X_n^i$ with all $c_i \in K$.  Then \[
\frac 1f = \frac hg = \sum_{i=0}^t \frac{c_i}g \cdot X_n^i.
\]
But for each $i$, if $c_i=0$, then $\frac {c_i}g=0 \in R_n$; otherwise $\frac {c_i}g = \frac 1{g/c_i} \in R$. Thus, $\frac 1f = \sum_{i=0}^t \frac{c_i}g \cdot X_n^i \in R[X_n]$.  Hence, $S \subseteq R[X_n]$, completing the proof.
\end{proof}

\begin{prop}\label{pr:loclower}
Let $j,n$ be integers with $0\leq j \leq  n$.  Then there is a unique prime ideal $P$ of $R$ such that $R_P$ is the \rco\ of $L[X_1, \ldots, X_j]$, where $L = K(X_{j+1}, \ldots, X_n)$.
\end{prop}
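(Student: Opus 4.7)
The key idea is to realize the desired $R_P$ as an explicit localization of $R$. Let $L := K(X_{j+1}, \ldots, X_n)$ and let $S$ denote the multiplicative subset of $R$ generated by $1/X_{j+1}, \ldots, 1/X_n$ (with the convention $S = \{1\}$ when $j=n$, and $S$ generated by all the $1/X_i$ when $j=0$).

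The first step is to prove, by induction on $n-j$, that $R[X_{j+1}, \ldots, X_n] = R(L[X_1, \ldots, X_j])$ as subrings of $F_n$. The base case $j = n$ is trivial. For the inductive step, Lemma~\ref{lem:localizingatonevariable} is nominally stated for $R(K[X_1,\ldots,X_n])$, but its proof goes through verbatim with the ground field $K$ replaced by an arbitrary field; applying it with $K$ replaced by $K' := K(X_{j+2}, \ldots, X_n)$ and inverting the variable $X_{j+1}$ yields
\[
R(K'[X_1,\ldots,X_{j+1}])[X_{j+1}] = R(K'(X_{j+1})[X_1,\ldots,X_j]) = R(L[X_1,\ldots,X_j]),
\]
which combines with the inductive hypothesis to give the claim. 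Since $1/X_i \in R$ and $X_i = 1/(1/X_i)$ for each $i$, the ring extension $R[X_{j+1}, \ldots, X_n]$ is precisely the localization $R_S$, and hence $R_S = R(L[X_1,\ldots,X_j])$.

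Next I apply Theorem~\ref{thm:islocal!!} to conclude that $R_S$ is a local ring. Let $\mathfrak m$ denote its unique maximal ideal and set $P := \mathfrak m \cap R$. To show $R_P = R_S$: every element of $S$ is a unit in $R_S$, hence lies outside $\mathfrak m$ and therefore outside $P$, giving $S \cap P = \emptyset$ and so $R_S \subseteq R_P$; conversely, every element of $R \setminus P$ lies in $R_S \setminus \mathfrak m$ and is therefore a unit in $R_S$, giving $R_P \subseteq R_S$. Combining, $R_P = R_S = R(L[X_1,\ldots,X_j])$.

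For uniqueness: if $Q$ is any prime of $R$ with $R_Q = R(L[X_1,\ldots,X_j])$ as subrings of $F_n$, then $R_Q$ is a local ring whose unique maximal ideal must contract to $Q$; but this maximal ideal is $\mathfrak m$, forcing $Q = \mathfrak m \cap R = P$. The only real obstacle in the plan is verifying that Lemma~\ref{lem:localizingatonevariable} extends to arbitrary coefficient fields, and this is immediate from inspecting its proof (nothing in that argument depends on any particular structure of $K$).
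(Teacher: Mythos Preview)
Your proof is correct and follows essentially the same approach as the paper's: both arguments obtain $R[X_{j+1},\ldots,X_n] = R(L[X_1,\ldots,X_j])$ by iterating Lemma~\ref{lem:localizingatonevariable}, observe this is a localization of $R$ that is local by Theorem~\ref{thm:islocal!!}, and deduce the existence and uniqueness of $P$ from standard localization theory. You have simply spelled out the details more fully, including the (correct and necessary) observation that Lemma~\ref{lem:localizingatonevariable} works over an arbitrary ground field, which the paper's phrase ``applying induction to Lemma~\ref{lem:localizingatonevariable}'' leaves implicit.
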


\begin{proof}
By applying induction to Lemma~\ref{lem:localizingatonevariable}, we have $R[X_{j+1}, \ldots, X_n] = R(L[X_1, \ldots, X_j]) =: S$.  But since $S$ is a local ring by Theorem~\ref{thm:islocal!!}, and since each $X_i^{-1} \in R$, there is a unique prime ideal $P$ of $R$ maximal with respect to avoiding all of $X_i^{-1}$ for $j+1\leq i \leq n$ such that $R_P = S$.
\end{proof}

\begin{lemma}\label{lem:Xiprimes}
For each $i$, there is a unique prime ideal $\q_i$ of $R$ maximal with respect to not containing $1/X_i$. We have $R_{\q_i} = R[X_i]$.
\end{lemma}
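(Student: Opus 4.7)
The plan hinges on Lemma~\ref{lem:localizingatonevariable} together with the standard correspondence between localizations and prime ideals disjoint from a multiplicative set. First I would invoke Lemma~\ref{lem:localizingatonevariable} to identify $R[X_i]$ with the reciprocal complement of the polynomial ring $K(X_i)[X_1, \ldots, \widehat{X_i}, \ldots, X_n]$, and then apply Theorem~\ref{thm:islocal!!} to this reciprocal complement to conclude that $R[X_i]$ is local; let $\mathfrak{M}$ denote its unique maximal ideal.

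Next I would realize $R[X_i]$ as an explicit localization of $R$. Since $X_i = (1/X_i)^{-1}$ and $1/X_i$ is a nonzero element of $R$, adjoining $X_i$ to $R$ amounts to inverting $1/X_i$, so $R[X_i] = S^{-1}R$ where $S := \{(1/X_i)^k : k\geq 0\} \subseteq R$. The standard bijection between prime ideals of a localization and prime ideals of the original ring disjoint from the multiplicative set then gives $\q_i := \mathfrak{M} \cap R$ as a prime of $R$ with $1/X_i \notin \q_i$, and moreover every prime of $R$ not containing $1/X_i$ is contained in $\q_i$, proving the uniqueness statement.

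For the equality $R_{\q_i} = R[X_i]$, since $S \subseteq R \setminus \q_i$ the localization map $R \to R_{\q_i}$ factors through $S^{-1}R = R[X_i]$, giving $R[X_i] \subseteq R_{\q_i}$. Conversely, any $s \in R \setminus \q_i$ lies outside $\mathfrak{M}$ and is thus a unit in the local ring $R[X_i]$, so every fraction $r/s$ with $r \in R$ and $s \in R \setminus \q_i$ already lies in $R[X_i]$, yielding the reverse containment. The main obstacle is entirely cosmetic: once Lemma~\ref{lem:localizingatonevariable} is in hand, nothing technical remains beyond tracking the localization-prime correspondence carefully enough to obtain both the uniqueness of $\q_i$ and the identification $R_{\q_i} = R[X_i]$.
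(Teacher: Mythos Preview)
Your proposal is correct and follows essentially the same approach as the paper: invoke Lemma~\ref{lem:localizingatonevariable} and Theorem~\ref{thm:islocal!!} to see that $R[X_i]$ is local, then use the prime-ideal/localization correspondence. The paper compresses your final two paragraphs into the single phrase ``by elementary localization theory,'' but the content is identical.
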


\begin{proof}
We may assume $i=n \geq 1$.  By Lemma~\ref{lem:localizingatonevariable}, $R[X_n]$ is the \rco\ of the polynomial ring in $n-1$ variables over a field, which by Theorem~\ref{thm:islocal!!} is local.  Hence by elementary localization theory, there is a unique prime ideal $\q_n$ of $R$ maximal with respect to avoiding $X_n^{-1}$, and $R_{\q_n} = R[X_n] = R(K(X_n)[X_1, \ldots, X_{n-1}])$. 
\end{proof}

\begin{thm}\label{thm:dim}
We have $\dim R = n$.
\end{thm}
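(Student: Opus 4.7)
The plan is to prove $\dim R = n$ by establishing the matching inequalities $\dim R \ge n$ and $\dim R \le n$.

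For the lower bound, I proceed by induction on $n$. The case $n=0$ is trivial, and $n=1$ is recorded in the Introduction, where $R \cong K[T]_{(T)}$ is a DVR with $T = 1/X_1$. Assume $\dim R(L[Y_1, \ldots, Y_m]) = m$ for every field $L$ and every $m < n$. For each $0 \le j \le n$, Proposition~\ref{pr:loclower} supplies a unique prime $P_j \in \Spec R$ such that $R_{P_j}$ is isomorphic to the reciprocal complement of $L_j[X_1, \ldots, X_j]$, where $L_j = K(X_{j+1}, \ldots, X_n)$. By the inductive hypothesis, $\dim R_{P_j} = j$, so $\height P_j = j$. The containment $P_{j-1} \subsetneq P_j$ is strict: from the proof of Proposition~\ref{pr:loclower} we have $R_{P_{j-1}} = R_{P_j}[X_j]$, and $X_j$ is a nonconstant homogeneous element of $L_j[X_1, \ldots, X_j]$, hence non-Egyptian by Lemma~\ref{lem:Egyptgraded}, so $X_j \notin R_{P_j}$. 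The chain $P_0 \subsetneq P_1 \subsetneq \cdots \subsetneq P_n$ thus has length $n$ in $\Spec R$, giving $\dim R \ge n$.

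For the upper bound, I invoke the classical fact that a domain $A$ containing a field $K$ inside its fraction field $F$ of finite transcendence degree $d$ over $K$ has Krull dimension at most $d$. In our setting, $K \subseteq R \subseteq F = K(X_1, \ldots, X_n)$ with transcendence degree $n$, so $\dim R \le n$. The underlying mechanism is valuation-theoretic: for any prime $\p \in \Spec R$, the local domain $R_\p$ is dominated by some valuation ring $V \subseteq F$ with $\rank V \ge \height \p$, obtained by composing rank-one valuations along a maximal chain of primes of $R_\p$; Abhyankar's inequality then constrains $\rank V \le n$.

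The main obstacle I anticipate is neither inequality individually, but rather framing the upper bound in a form cleanly applicable to the non-Noetherian ring $R$---specifically, the existence of a dominating valuation ring of rank equal to $\height \p$ on the fraction field. This is a standard result in valuation theory but will require an explicit citation. Combined with the lower bound from Proposition~\ref{pr:loclower}, these two inequalities yield $\dim R = n$.
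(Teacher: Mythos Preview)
Your proof is correct, and the lower bound is essentially the paper's argument in expanded form: the paper uses Lemma~\ref{lem:Xiprimes} to produce the single prime $\q_n$ (your $P_{n-1}$), applies the inductive hypothesis to get $\hgt \q_n = n-1$, and then observes $1/X_n \in \m \setminus \q_n$ to add one more step. You unroll this into the full chain $P_0 \subsetneq \cdots \subsetneq P_n$. One minor slip: your sentence ``By the inductive hypothesis, $\dim R_{P_j} = j$'' is circular at $j=n$, but you do not actually use this---the strictness argument via $X_j \notin R_{P_j}$ already establishes the chain, so the proof is unaffected.

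The upper bound is where you genuinely diverge from the paper. The paper exploits the involution $\sigma$ to identify $R \cong R^*$ with an \emph{overring} of the Noetherian domain $D = K[X_1,\ldots,X_n]$, and then cites the Jaffard-domain result \cite{ABDFK-Jaffard} that any overring of an $n$-dimensional Noetherian domain has dimension $\leq n$. Your route---bounding $\dim R$ by $\mathrm{trdeg}_K F$ via a dominating valuation of rank $\geq \hgt \p$ and Abhyankar's inequality---is equally valid and arguably more elementary, since it avoids the $\sigma$-trick and the Jaffard reference entirely. The trade-off is exactly what you identify: you need the standard valuation-theoretic fact that a chain of primes in a domain lifts to a chain in some valuation overring (e.g.\ Bourbaki, \emph{Commutative Algebra}, Ch.~VI, \S4, or Zariski--Samuel), whereas the paper's argument packages this into the single citation \cite{ABDFK-Jaffard}.
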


\begin{proof}
We proceed by induction on $n$.  Of course $R_0 = K$, which has dimension 0, so we may assume $n>0$. By Theorem~\ref{thm:islocalpoly}, $R$ is local, and its maximal ideal $\m$ contains the reciprocals of all the variables.  Let $\q=\q_n$ be as in Lemma~\ref{lem:Xiprimes}, so that $1/X_n \notin \q_n$ and $R_\q$ is isomorphic to the \rco\ of a polynomial ring in $n-1$ variables over a field by Lemma~\ref{lem:localizingatonevariable}.  Then by inductive hypothesis, $\hgt \q = \dim R_\q = n-1$.  Since $1/X_n \in \m \setminus \q$, we have $\hgt \m > \hgt \q=n-1$, whence $\dim R=\hgt \m \geq n$.

On the other hand, since $R^*$ is an overring of the $n$-dimensional Noetherian domain $D$, we have $\dim R = \dim R^* \leq n$; see \cite{ABDFK-Jaffard}.  Hence $\dim R = n$.
\end{proof}



\section
{Exotic properties of $R$}

For most of this section, we work in 2 variables, so that $D = K[X,Y]$ where  $X=X_1$, $Y=X_2$ for short.  Then the notation $D_n$, $R_n$, etc. when $n\neq 2$ will stand for the corresponding rings of other dimensions.  We show that $R_n^*$ is not integrally closed when $n\geq 2$. We also show that it is not a finite conductor domain, hence not coherent, and is thus also non-Noetherian.

Before we begin, recall the following presumably well-known result: \begin{lemma}\label{lem:free}
Let $A \subseteq B$ be integral domains such that $B$ is free as an $A$-module.  Then $B \cap \Frac(A) = A$.
\end{lemma}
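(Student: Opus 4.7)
The plan is to reduce the equality $B \cap \Frac(A) = A$ to the assertion that $bB \cap A = bA$ for every $b \in A \setminus \{0\}$, and then to prove the latter using the fact that $A \subseteq B$ is faithfully flat.

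So, let $x \in B \cap \Frac(A)$ and write $x = a/b$ with $a, b \in A$ and $b \neq 0$. Then $bx = a$ in $B$, so $a \in bB \cap A$. Granting for the moment that $bB \cap A = bA$, we may write $a = ba'$ with $a' \in A$; the equation $b(x - a') = 0$ in the domain $B$ with $b \neq 0$ then gives $x = a' \in A$. The reverse inclusion $A \subseteq B \cap \Frac(A)$ is immediate.

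To establish $bB \cap A = bA$, I would first observe that $B$ is faithfully flat over $A$. Freeness already gives flatness; for faithful flatness, note that since $B \neq 0$, any $A$-basis $\{e_i\}_{i \in I}$ of $B$ is nonempty, so for every maximal ideal $\mathfrak{m}$ of $A$ we have $B/\mathfrak{m}B \cong \bigoplus_{i \in I}(A/\mathfrak{m})e_i \neq 0$, i.e., $\mathfrak{m} B \neq B$. Now tensor the short exact sequence $0 \to A \xrightarrow{\cdot b} A \to A/bA \to 0$ with $B$ over $A$; flatness yields the short exact sequence $0 \to B \xrightarrow{\cdot b} B \to B/bB \to 0$ together with the canonical identification $A/bA \otimes_A B \cong B/bB$. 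Faithful flatness implies that the natural map $A/bA \to A/bA \otimes_A B \cong B/bB$ is injective, so the composition $A \twoheadrightarrow A/bA \hookrightarrow B/bB$ has kernel exactly $bA$. But its kernel is plainly $\{a \in A : a \in bB\} = bB \cap A$, so $bB \cap A = bA$ as required.

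The main technical point is the faithful flatness step; once that is in place, the rest is a routine diagram chase. A more hands-on alternative would be to fix an $A$-basis $\{e_i\}_{i \in I}$ of $B$, expand $1 = \sum c_i e_i$ and $x = \sum d_i e_i$, and compare with $bx = a$ to obtain the coordinate relations $b d_i = a c_i$; however, squeezing $x \in A$ out of these relations essentially recapitulates faithful flatness, so the abstract route above seems the cleanest.
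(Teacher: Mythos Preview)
Your proof is correct and follows essentially the same approach as the paper: reduce to $bB \cap A = bA$ for nonzero $b \in A$, then use freeness to establish this. The paper compresses your faithful-flatness paragraph into the single clause ``$yB \cap A = (yA)B \cap A = yA$, where the latter equation holds by freeness,'' but the content is the same.
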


\begin{proof}
Let $b \in B \cap \Frac{A}$. Write $b = x/y$ with $x,y\in A$ and $y\neq 0$.  Then $x=yb \in yB \cap A = (yA)B \cap A = yA$, where the latter equation holds by freeness.  Thus, $x=ya$ for some $a\in A$, whence $yb=ya$, so by cancellation, $b=a \in A$.
\end{proof}

Our methodology here is to construct a family of valuation rings that contain $R^*$, which serve as a tool to analyze the elements and prime ideals of our ring.  We must start with notation that will be useful:


\begin{nota}\label{nota:relpr}
Choose two relatively prime positive integers $p,q$ with $p<q$, such that neither $p$ nor $q$ is a multiple of $\chr K$. 
Let $K'$ be the smallest field extension of $K$ that contains all the primitive $p$th and $q$th roots of 1.  Let $L/K'(X,Y)$ be generated by elements $s,t$ such that $s^p=X$ and $t^q=Y$.  Note that $K'[s,t]$ is free as a $K'[X,Y]$-module on the basis $\{s^i t^j \mid 0 \leq i <p,$ $0\leq j<q\}$.
\end{nota}

\begin{lemma}\label{lem:udiv}
Let $g \in K[X,Y]$.  Then $s-t \mid g$ in $K'[s,t]$ if and only if $X^q - Y^p \mid  g$ in $K[X,Y]$.
\end{lemma}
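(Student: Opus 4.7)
The plan is to prove the two directions separately, with the substantive content living in the $(\Rightarrow)$ direction, for which I will combine a Galois-type orbit argument with a twofold application of the freeness descent given by Lemma~\ref{lem:free}.

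For $(\Leftarrow)$: since $s^p = X$ and $t^q = Y$, we have $X^q - Y^p = s^{pq} - t^{pq}$, and the standard geometric factorization shows $s-t \mid s^{pq} - t^{pq}$ in $K'[s,t]$. Hence any $K[X,Y]$-multiple of $X^q - Y^p$ is automatically a $K'[s,t]$-multiple of $s-t$.

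For $(\Rightarrow)$: I will consider the action of the group $G := \mu_p \times \mu_q$ on $K'[s,t]$ defined by $(\zeta, \eta) \cdot s = \zeta s$ and $(\zeta, \eta) \cdot t = \eta t$. The fixed ring of this action is $K'[X,Y]$, which contains $g$. Applying each $(\zeta,\eta) \in G$ to the hypothesis $s-t \mid g$ yields $\zeta s - \eta t \mid g$ for all $(\zeta,\eta) \in G$. Factoring out the unit $\zeta$, each such divisor is associate to a linear form $s - \xi t$, where $\xi := \eta\zeta^{-1}$. Because $\gcd(p,q) = 1$, we have $\mu_p \cap \mu_q = \{1\}$ and $\mu_p \mu_q = \mu_{pq}$, so as $(\zeta,\eta)$ ranges over $G$, the element $\xi$ ranges exactly over $\mu_{pq}$. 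For distinct $\xi, \xi' \in \mu_{pq}$, the linear forms $s - \xi t$ and $s - \xi' t$ are non-associate irreducibles in the UFD $K'[s,t]$, hence pairwise coprime, so their product divides $g$. That product is $\prod_{\xi \in \mu_{pq}}(s - \xi t) = s^{pq} - t^{pq} = X^q - Y^p$ (by homogenizing the identity $\prod_{\xi \in \mu_{pq}}(U - \xi) = U^{pq} - 1$). Thus $X^q - Y^p \mid g$ in $K'[s,t]$.

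To conclude, I will descend this divisibility back to $K[X,Y]$ via two applications of Lemma~\ref{lem:free}. First, $K'[s,t]$ is free over $K'[X,Y]$ on the basis $\{s^i t^j : 0 \le i < p,\ 0 \le j < q\}$, so $g/(X^q - Y^p) \in K'[s,t] \cap \Frac(K'[X,Y]) = K'[X,Y]$. Second, $K'[X,Y]$ is free over $K[X,Y]$ on any $K$-basis of $K'$, so the same quotient lies in $K'[X,Y] \cap \Frac(K[X,Y]) = K[X,Y]$. The main obstacle I anticipate is the bookkeeping in the orbit step: making sure the map $(\zeta,\eta) \mapsto \eta\zeta^{-1}$ really surjects onto $\mu_{pq}$, that the resulting linear forms $s - \xi t$ are pairwise non-associate, and that their product matches $X^q - Y^p$ on the nose. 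Once that is in place, both descents are immediate from the freeness framework the paper has already set up.
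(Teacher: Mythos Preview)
Your proof is correct and follows essentially the same approach as the paper: a Galois-orbit argument showing all $pq$ conjugates of $s-t$ divide $g$, followed by descent via Lemma~\ref{lem:free}. The only cosmetic differences are that the paper reduces to $K=K'$ at the outset (rather than doing the $K'[X,Y]\to K[X,Y]$ descent at the end), and indexes the divisors as $\xi_p^i s - \xi_q^j t$ rather than normalizing to $s-\xi t$ with $\xi\in\mu_{pq}$; your normalization makes the pairwise non-associate claim slightly more transparent.
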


\begin{proof}
Since $K'[X,Y] \cap K(X,Y) =K[X,Y]$ by Lemma~\ref{lem:free}, we may assume $K=K'$.  Thus, we may let $\xi_p$ (resp. $\xi_q$) be a primitive $p$th (resp. $q$th) root of unity in $K$.  Then \begin{align*}
    \prod_{i=0}^{p-1} \prod_{i=0}^{q-1} (\xi_p^i s - \xi_q^j t) &= \prod_{i=0}^{p-1} \left( (\xi_p^is)^q - t^q\right) = \prod_{k=0}^{p-1} (\xi_p^k s^q - t^q)
    = (-1)^p\prod_{k=0}^{p-1} (t^q - \xi_p^k s^q) \\&= (-1)^p(t^{pq} - s^{pq}) = (-1)^p(Y^p - X^q).
\end{align*}
The second equality above holds because $p,q$ are relatively prime, so that the order of $q+p\Z$ in $\Z/p\Z$ must be $p$.

Also note that for each pair $(i,j)$ of integers with $0 \leq i <p$, $0 \leq j <q$, there is unique $\tau_{ij} \in \Aut_{K(X,Y)}L$ such that $\tau_{ij}(s)= \xi_p^is$ and $\tau_{ij}(t)=\xi_q^jt$.
Thus, if $s-t \mid g$ in $K[s,t]$, then for each $i,j$, we have 
$\tau_{ij}(s-t) = \xi_p^i s - \xi_q^jt \mid \tau_{ij}(g) = g$.  Since the $\xi_p^i s - \xi_q^jt$ are mutually nonassociate irreducible elements of $K[s,t]$, a UFD, it follows that $X^q - Y^p = \pm\prod_{i,j} (\xi_p^i s - \xi_q^jt) \mid g$ in $K[s,t]$.  Hence, \[
\frac g{X^q - Y^p} \in K(X,Y) \cap K[s,t] = K[X,Y],
\]
again by Lemma~\ref{lem:free},
which implies that $X^q - Y^p \mid g$ in $K[X,Y]$.

For the converse, simply note that if $X^q - Y^p \mid g$ in $K[X,Y]$, then as $s-t \mid X^q - Y^p$ in $K[s,t]$ and $K[X,Y] \subset K[s,t]$, it follows by transitivity of divisibility that $s-t \mid g$ in $K[s,t]$.
\end{proof}

\begin{nota}\label{nota:val} Let $u:=s-t$.  Then $s,u$ are algebraically independent over $K'$, and $K'[s,t] = K'[s,u]$.  We define a valuation $w:= w_h := w_{p,q,h}$ on $K'[s,u]$ by setting $w(s) = 1$, $w(u) = h$ for some integer $h \geq1$, and for any nonzero $f = \sum_{i,j} c_{ij} s^i u^j$ in $K'[s,t]$, where $c_{ij} \in K'$, we set $w(f) = \min \{w(s^i u^j) \mid c_{ij} \neq 0\} = \min \{i+hj \mid c_{ij} \neq 0 \}$.  Then we let $W := W_h := W_{p,q,h}$ be the corresponding valuation ring in the field $L$.  Clearly $K'[s,u] \subseteq W$.  Set $V := W \cap F$ (denoted $V_h$ or $V_{p,q,h}$ if needed) and let $v =v_h = v_{p,q,h}$ be the corresponding valuation on $F$.
\end{nota}


\begin{lemma}
\label{lem:overp}
The valuation ring $V$ is an overring of $R^* := R_2^*$ if and only if $h \leq pq+1$.  If $h<pq+1$, then $\sigma(1/f) \in \m_V$ for all $f \in D \setminus K$.

Suppose on the other hand that $h=pq+1$. 
Then for an irreducible polynomial $f \in K[X,Y]$, with $\alpha= \sigma(\frac{1}{f^*})$ and $ \theta = \sigma(\frac{1}{X^q-Y^p}) = \frac{X^qY^p}{Y^p-X^q}$, we have $v(\alpha)=0$ if and only if
$\alpha = \theta^m \delta^{-1}$ for some $m \geq 1$ and some element $\delta \in K[\theta] \setminus (\theta)K[\theta]$. 
Otherwise $v(\alpha) \geq p$. 
\end{lemma}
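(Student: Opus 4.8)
The plan is to reduce everything to explicit computations with the valuation $w=w_{p,q,h}$ — the monomial valuation on $K'[s,u]$ with $w(s)=1$, $w(u)=h$ — using the substitutions $X=s^p$ and $Y=(s-u)^q$. First I would record $w(X)=p$, $w(Y)=q$, and, expanding $X^q-Y^p=s^{pq}-(s-u)^{pq}=pq\,s^{pq-1}u+(\text{terms of larger }w)$ — the coefficient $pq$ being a unit of $K$ by the hypotheses on $p,q$ — that $w(X^q-Y^p)=pq+h-1$. Consequently $v(\theta)=w(X^qY^p)-w(X^q-Y^p)=2pq-(pq+h-1)=pq+1-h$, which is $\ge0$ exactly when $h\le pq+1$ and $=0$ exactly when $h=pq+1$. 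Since $\theta=\sigma(1/(X^q-Y^p))\in R^*$, this already gives the ``only if'' half of the first assertion: if $h>pq+1$ then $v(\theta)<0$, so $R^*\not\subseteq V$.

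\textbf{A formula for $w$, and the first two assertions.} The computational heart is a closed formula for $w$ on $K[X,Y]$. Let $\nu_0$ be the $(p,q)$-weighted monomial valuation, and for a $\nu_0$-homogeneous $g$ let $c(g)$ be the multiplicity of the irreducible $X^q-Y^p$ dividing $g$. I would prove: (i) for $\nu_0$-homogeneous $g$, $w(g)=\nu_0(g)+(h-1)c(g)$, with $w$-leading form a nonzero scalar times $s^{\nu_0(g)-c(g)}u^{c(g)}$ — by factoring $g$ as a monomial times $(X^q-Y^p)^{c(g)}$ times $Q(X^q,Y^p)$ with $X^q-Y^p\nmid Q$ and taking $w$ of each factor; and (ii) for general $f=\sum_d f_d$ (its $\nu_0$-homogeneous decomposition), $w(f)=\min_d w(f_d)$, since the leading forms from distinct $d$ are distinct monomials $s^\ast u^{c(f_d)}$. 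A bookkeeping consequence (using $\deg_X g_d\ge q\,c(g_d)$ and $\deg_Y g_d\ge p\,c(g_d)$) is the bound $w(g)\le p\deg_X g+q\deg_Y g$ whenever $h\le pq+1$, strict when $h\le pq$ and $g\in D\setminus K$ is coprime to $X,Y$. Writing $\sigma(1/f)=\mathbf{X}^{\mathbf{a}(f)+\mathbf{t}(f)}/f^*$ via Lemma~\ref{lem:star} (so $\deg_{X_i}f^*=a_i(f)$ and $X_i\nmid f^*$), this bound gives $v(\sigma(1/f))=p(a_1+t_1)+q(a_2+t_2)-w(f^*)\ge p\,t_1+q\,t_2\ge0$ for $h\le pq+1$, proving $R^*\subseteq V$; and its strict form gives $v(\sigma(1/f))>0$ for all $f\in D\setminus K$ when $h\le pq$ (the case $X_i\mid f$ covered by the surviving $p\,t_i$-term), which is the second assertion.

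\textbf{The case $h=pq+1$: the trichotomy.} Take $h=pq+1$ and $f$ irreducible; the case $f\in\{X,Y\}$ gives $f^*=1$, $\alpha=1$ and is set aside, so $X,Y\nmid f$, $\alpha=\sigma(1/f^*)=X^{a_1}Y^{a_2}/f$ with $a_i=\deg_{X_i}f$, and by (i)--(ii), $v(\alpha)=(pa_1+qa_2)-w(f)=\max_d\big[(pa_1+qa_2)-\nu_0(f_d)-pq\,c(f_d)\big]$. Using the factored form of $f_d$, each bracket rewrites as $p\gamma_d+q\beta_d+pq\,e_d$, where $\gamma_d=a_1-\deg_X f_d\ge0$, $\beta_d=a_2-\deg_Y f_d\ge0$, and $e_d=\deg Q_d\ge0$. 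Hence $v(\alpha)\ge0$; and if $v(\alpha)>0$ then the largest bracket is a nonzero $\mathbb{N}$-combination of $p,q,pq$, whose least positive value is $p$ — this is the ``otherwise $v(\alpha)\ge p$'' clause. Moreover $v(\alpha)=0$ forces $\gamma_d=\beta_d=e_d=0$ for every $d$, i.e. $f_d=(\text{const})X^{a_1-qc_d}Y^{a_2-pc_d}(X^q-Y^p)^{c_d}$; since the $c_d$ are mutually distinct and $X,Y\nmid f$, summing over $d$ forces $a_1=qm$, $a_2=pm$ with $m:=\max_d c_d\ge1$ and $f=F(X^qY^p,X^q-Y^p)$ for a binary form $F$ of degree $m$ with $F(0,1)\neq0$. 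Substituting $X^qY^p/(X^q-Y^p)=-\theta$ then expresses $\alpha=\theta^m\delta^{-1}$ with $\delta\in K[\theta]$ of nonzero constant term.

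\textbf{The converse, and the main difficulty.} Conversely, suppose $\alpha=\sigma(1/f^*)$ ($f$ irreducible) also equals $\theta^m\delta^{-1}$. Writing $\delta=\Delta/(Y^p-X^q)^n$ with $\Delta\in K[X,Y]$ (which, using that $\delta$ has nonzero constant and leading coefficients, is coprime to $X$, $Y$, and $X^q-Y^p$), we get $\alpha=(X^qY^p)^m(Y^p-X^q)^{n-m}/\Delta$; comparing this with the reduced form $X^{a_1}Y^{a_2}/f$ and invoking the irreducibility of $f$ (and of $X^q-Y^p$) forces either $f\sim X^q-Y^p$ — whence $\alpha\in K^*\theta$ and $v(\alpha)=pq+1-h=0$ — or $n=m$ and $f\sim\Delta=\sum_{i=0}^{m}c_i(X^qY^p)^i(Y^p-X^q)^{m-i}$. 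In the latter case a direct count using $w(Y^p-X^q)=w(X^qY^p)=2pq$ shows each of the $m+1$ summands has $w$-value $2pqm$ and $w$-leading monomial with $u$-exponent $m-i$, so no cancellation occurs, $w(\Delta)=2pqm$, and $v(\alpha)=2pqm-2pqm=0$. The main obstacle is step (ii) together with its refinement: one must rule out any cancellation that would inflate $w(f)$ beyond $\min_d w(f_d)$, since this is exactly what makes the clean identity $v(\alpha)=\max_d(p\gamma_d+q\beta_d+pq\,e_d)$ — hence the trichotomy $v(\alpha)\in\{0\}\cup[p,\infty)$ — valid; the recognition that the vanishing of all $\gamma_d,\beta_d,e_d$ pins $f$ down to a binary form in $X^qY^p$ and $X^q-Y^p$ is the second delicate point.
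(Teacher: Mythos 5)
Your proposal is correct, but it takes a genuinely different route from the paper's. The paper never establishes an exact formula for $w$ on $K[X,Y]$: for the containment $R^*\subseteq V$ and the lower bounds it uses only the one-sided estimate $w(f)\le w(f_1)$, where $f=f_1+uf_2$ in $K'[s,u]$ and $f_1\neq 0$ is guaranteed by Lemma~\ref{lem:udiv} (since $X^q-Y^p\nmid f$), and it then proves the trichotomy at $h=pq+1$ by induction on $a_1(f)+a_2(f)$, reducing to $f=c_{ab}X^aY^b+(X^q-Y^p)^mH$ and applying the inductive hypothesis to the irreducible factors of $H$; the converse is a one-liner there, since $\delta\in K[\theta]\setminus(\theta)K[\theta]$ is a unit of the local ring $R^*$ by Theorem~\ref{thm:islocalpoly}, so $v(\delta)=0$ and $v(\theta^m\delta^{-1})=m(pq+1-h)$. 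You instead prove the exact no-cancellation identity $w(f)=\min_d w(f_d)$ over the $(p,q)$-weighted homogeneous components, together with $w(f_d)=\nu_0(f_d)+(h-1)c(f_d)$, which packages everything into the closed formula $v(\alpha)=\max_d\bigl[p\gamma_d+q\beta_d+pq\,e_d+c_d(pq+1-h)\bigr]$; from this the containment, the strict positivity for $h\le pq$, the dichotomy $v(\alpha)\in\{0\}\cup[p,\infty)$, and the shape $f=F(X^qY^p,\,X^q-Y^p)$ when $v(\alpha)=0$ all follow with no induction, which is arguably more transparent. The cost is precisely the no-cancellation step (ii): your stated reason (the leading forms are distinct monomials $s^{\ast}u^{c_d}$) is literally valid only for $h\ge 2$; for $h=1$ one should instead note that distinct weighted-homogeneous components map under $X\mapsto s^p$, $Y\mapsto (s-u)^q$ to $(s,u)$-homogeneous polynomials of distinct total degrees, so again no cancellation — a trivial patch, and the critical case $h=pq+1$ is unaffected. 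Your converse via $\Delta=\sum_i c_i(X^qY^p)^i(Y^p-X^q)^{n-i}$ is correct but substantially more work than the paper's unit argument; and, like the paper, you must (and do) set aside the boundary case $f$ associate to $X$ or $Y$, where $\alpha=1$ has value $0$ but is not of the form $\theta^m\delta^{-1}$ with $m\ge 1$.
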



\begin{proof}
Since $X=X^*$ and $Y=Y^*$, we have $v(\sigma(1/X^*)) = v(X) = w(s^p) = pw(s) = p$, and $w(t) = w(s-u) = 1$, so $v(\sigma(1/Y^*)) = v(Y) = w(t^q) = qw(t) = q$.  Hence, $V$ is an overring of $K[X,Y]$, with $(X,Y) \subseteq \m_V$.

Now suppose $(i,j)$ is a pair of integers with $0 \leq i < p$, $0 \leq j < q$, and $(i,j) \neq (0,0)$.  Then $\xi_p^is - \xi_q^jt = (\xi_p^i - \xi_q^j)s + \xi_p^j u$, so since $\xi_p^i - \xi_q^j \in K \setminus  \{0\}$, we have $w(\xi_p^is - \xi_q^jt)=1$.  Therefore, \begin{align*}
v(X^q-Y^p) &= w\left(\pm \prod_{i=0}^{p-1} \prod_{i=0}^{q-1} (\xi_p^i s - \xi_q^j t) \right) = w(s-t) + \sum_{i,j}^{(i,j) \neq (0,0)} w(\xi_p^i s - \xi_q^j t) \\
&= h + pq-1.
\end{align*}

It follows that \[ 
v(\theta) = 
v\left(\frac{X^qY^p}{X^q-Y^p}\right) = qp + pq - (h+pq-1) = pq+1-h.
\]
Thus, if $h>pq+1$, we have $v(\theta)<0$ so that $\theta \notin V$ and $R^* \nsubseteq V$.  But as long as $h\leq pq+1$ we have $v(\theta) \geq 0$, with $v(\theta) >0 \iff h<pq+1$.  From now on we assume $h \leq pq+1$.

Now let $f \in K[X,Y]$ be nonconstant, irreducible, and not associate to any of $X$, $Y$, $X^q-Y^p$. Then for some  $c_{ij} \in K$, we have \begin{align}\label{al:suXY}
\begin{split}
    f &= \sum_{i,j} c_{ij} X^i Y^j = \sum_{ij} c_{ij} s^{pi} (s-u)^{qj} \\
    &= \sum_{i,j} c_{ij} s^{pi} \sum_{k=0}^{qj} (-1)^k {qj \choose k} s^{qj-k} u^k = \\
    &= \left(\sum_{i,j} c_{ij} s^{pi+qj}\right) + u \cdot \sum_{i,j} c_{ij} s^{pi} \sum_{k=1}^{qj} (-1)^k {qj \choose k} s^{qj-k} u^{k-1}.
\end{split}
\end{align}
Then since $f$ is not associate to (hence not divisible by) $X^q - Y^p$ in $K[X,Y]$, it follows from Lemma~\ref{lem:udiv} that $u \nmid f$ in $K'[s,u]$.  Therefore, $f = f_1 + uf_2$ with $0 \neq f_1 \in K'[s]$ and $f_2 \in K'[s,u]$.  in particular, $f_1 = \sum_{i,j} c_{ij} s^{pi + qj}$.  
Then $v(f) = w(f) \leq w(f_1) = \min\{pi+qj \mid c_{ij} \neq 0\}$.  As usual, recalling the notation of Lemma \ref{lem:star}, write $\alpha = \sigma(1/f^*) = \frac{X^aY^b}f$, where $(a,b) = (a_1(f), a_2(f))$ and $a\geq 1$, $b \geq 1$. Thus in the sums in (\ref{al:suXY}) above, we have $i \leq a$ and $j \leq b$ for all pairs $(i,j)$ such that $c_{ij} \neq 0$.  Hence, $w(f_1)$ takes the form $pi+qj$ for some $i\leq a$, $j \leq b$.  Therefore $v(\alpha) = pa+qb-v(f) \geq p(a-i) + q(b-j) \geq 0$, and if it is nonzero it must be at least $p$.  Since $R^*$ is generated as a $K$-algebra by all such terms $\sigma(1/f^*)$, it follows that $R^* \subseteq V$.

Now, suppose $\delta \in K[\theta] \setminus (\theta)K[\theta]$. Then by Theorem~\ref{thm:islocalpoly}, $\delta$ is a unit of $R^*$, hence also in $V$, so $v(\delta) = 0$. Hence for any nonnegative integer $m$, we have $v(\theta^m/\delta) = m \cdot (pq+1-h)$.  Thus, it has value 0 $\iff h=pq+1$; otherwise positive.


It remains to show that if $h=pq+1$ and $v(\alpha) = 0$, then there exist some $m \geq 0$ and some $\delta \in K[\theta] \setminus (\theta) K[\theta]$ with $\alpha = \theta^m/\delta$, whereas if $h<pq+1$ then $v(\alpha)>0$.
To prove this, let $f$, $\alpha$, $f_1$, $f_2$ be as above.  We proceed by induction on the number $\ell=a+b = a_1(f) + a_2(f)$, noting that the statement is vacuously true for $\ell = 0,1$.

We first dispense with the case that some monomial $c_{ij} s^{pi+qj}$ appearing in $f_1$ satisfies either $i<a$ or $j<b$. Then $v(f) \leq w(f_1) \leq pi+qj \leq pa+qb-p$, so that $v(\alpha) = pa+qb-v(f) \geq p$.

Thus, we may assume that $f_1 = c_{ab} s^{pa+qb}$, so that $c_{ab} \neq 0$. Set $g:=f-c_{ab} X^a Y^b$.  Then rewriting $g$ as an element of $K'[s,u]$, we have $g=g_1 + ug_2$, where $g_1 = f_1 - c_{ab} s^{pa+qb} = 0$.  Hence $u | g$ in $K'[s,t]$, whence $X^q - Y^p | g$ in $K[X,Y]$ by Lemma~\ref{lem:udiv}.  That is, we have $f = c_{ab} X^a Y^b + (X^q - Y^p)^m H$, where $m \geq 1$ and $H \in K[X,Y]$ is relatively prime to each of $X$, $Y$, and $X^q - Y^p$.  Thus $a \geq p$ and $b \geq q$.  Also note that $a_1(H) \leq a-qm$ and $a_2(H) \leq b-pm$. 

Set $\alpha' := \frac{X^a Y^b} {(Y^p - X^q)^mH}$.  Then there exist nonnegative integers $e_1, e_2$ with $\alpha' = X^{e_1}Y^{e_2} \theta^m \sigma(1/H^*)$.  In particular, $e_1 = a-mp-a_1(H)$ and $e_2 = b-mq-a_2(H)$.  Thus, $c_{ab}\alpha' + 1$ is a unit of $R^*$, so since $\alpha = \frac{\alpha'}{c_{ab}\alpha'+1}$, we have $v(\alpha') = v(\alpha)$, which we assume to be $0$.  But $v(\alpha') = e_1p + e_2q +m(pq+1-h)+ v(\sigma(1/H^*))$, whence since $v(\alpha')=0$, we have $e_1=e_2=0$, and every irreducible factor $\tau$ of $H$ satisfies $v(\sigma(1/\tau^*))=0$.  Moreover if $h<pq+1$ it further follows that $m=0$, so that $f=c_{ab} X^a Y^b$, contradicting the fact that $f$ is relatively prime to $X$ and $Y$, finishing this case.

Then in the remaining case (where $h=pq+1$), by inductive hypothesis each such $\tau$ satisfies $\sigma(1/\tau^*) = \theta^{m(\tau)} / \delta(\tau)$. 
 As these terms are multiplicative, there is some $k \in \N$ and $\epsilon \in K[\theta] \setminus (\theta) K[\theta]$ with $\sigma(1/H^*) = \theta^k/\epsilon$  Thus, we have \[
\alpha = \frac{\alpha'}{c_{ab}\alpha'+1} = \frac{\theta^{m+k}/\epsilon}{c_{ab}(\theta^{m+k}/\epsilon)+1} = \frac{\theta^{m+k}}{c_{ab}\theta^{m+k}+\epsilon}. 
\]
Since $c_{ab} \theta^{m+k}+\epsilon \in K[\theta] \setminus (\theta) K[\theta]$, we are done.
\end{proof}

Recall (see \cite{Za-fc}) that an integral domain is a \emph{finite conductor domain} if the intersection of any pair of principal ideals is finitely generated.

\begin{thm}\label{thm:notfc}
For any $n\geq 2$, the ideal $(1/X_1)R_n \cap (1/X_2)R_n$ is not finitely generated.  Hence $R_n$ is not a finite conductor domain.
\end{thm}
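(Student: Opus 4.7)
The plan is to reduce to $n=2$, switch to $R^*:=\sigma(R)$, exhibit an infinite family of elements in $XR^*\cap YR^*$ indexed by coprime integer pairs, and rule out any finite generating set using the valuations $v_{p,q,pq+1}$ of Notation~\ref{nota:val}.

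The reduction to $n=2$ is immediate from Proposition~\ref{pr:loclower} with $j=2$: it produces a prime $P$ of $R_n$ such that $(R_n)_P$ is isomorphic to the reciprocal complement of $K'[X_1,X_2]$ over $K':=K(X_3,\ldots,X_n)$. Since finite generation of an ideal is preserved by localization, it suffices to treat the case $n=2$ over the field $K'$. Writing $X=X_1$, $Y=X_2$, and $R=R_2$, the isomorphism $\sigma:R\to R^*$ identifies $(1/X)R\cap(1/Y)R$ with $XR^*\cap YR^*$, so I aim to show the latter is not finitely generated.

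For each pair of coprime positive integers $p<q$ with $\gcd(pq,\chr K)=1$, consider
\[
\mu_{p,q}:=\sigma\left(\frac{1}{XY(X^q-Y^p)}\right)=XY\cdot\theta_{p,q}=\frac{X^{q+1}Y^{p+1}}{Y^p-X^q},
\]
with $\theta_{p,q}$ as in Lemma~\ref{lem:overp}. The factorizations $\mu_{p,q}=X(Y\theta_{p,q})=Y(X\theta_{p,q})$ show $\mu_{p,q}\in XR^*\cap YR^*$. Suppose for contradiction that $\gamma_1,\ldots,\gamma_r$ generate this ideal over $R^*$. Expanding each $\gamma_i=\sum_j c_{ij}\sigma(1/f_{ij})$ with $c_{ij}\in K$ and $f_{ij}\in K[X,Y]$, collect the (finitely many) irreducible factors up to units of the polynomials $f_{ij}^*$ into a set $\mathcal{F}$. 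Since the polynomials $X^q-Y^p$ are pairwise non-associate irreducibles for distinct valid pairs, I may fix $(p,q)$ so that $X^q-Y^p\notin\mathcal{F}$.

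Write $\mu_{p,q}=\sum_i\beta_i\gamma_i$ with $\beta_i\in R^*$ and apply $v:=v_{p,q,pq+1}$. The computations in the proof of Lemma~\ref{lem:overp} give $v(Y^p-X^q)=2pq$, hence $v(\mu_{p,q})=p+q$, while $v(\gamma_i)\geq v(Y)=q$ for every $i$ because $\gamma_i/Y\in R^*$. The decisive step is to combine Lemma~\ref{lem:overp}'s description of value-zero elements of $R^*$ (they involve $\theta_{p,q}$ and elements of $K[\theta_{p,q}]\setminus(\theta_{p,q})K[\theta_{p,q}]$) with the support restriction on the $\gamma_i$: reducing $\mu_{p,q}=\sum\beta_i\gamma_i$ in the residue field $V/\mathfrak m_V$, the leading contribution from the $\gamma_i$'s cannot include the factor $X^q-Y^p$, so that factor must appear via the $\beta_i$, and tracking this through the valuation produces a contradiction with $v(\mu_{p,q})=p+q$. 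The principal obstacle is extending the value-zero characterization of Lemma~\ref{lem:overp}, stated only for atomic elements $\sigma(1/f^*)$ with $f$ irreducible, to general elements of $R^*$, and managing cancellations in $\sum\beta_i\gamma_i$; a residue-field argument identifying the image of $R^*\cap V$ in $V/\mathfrak m_V$ with an appropriate localization of $K[\bar\theta_{p,q}]$ should make the argument precise.
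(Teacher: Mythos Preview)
Your reduction to $n=2$ and the construction of the test elements $\mu_{p,q}\in XR^*\cap YR^*$ are fine, and working with the valuations $v_{p,q,pq+1}$ is the right tool. But the argument has a real gap at exactly the point you flag. Lemma~\ref{lem:overp} characterizes value-zero elements only of the form $\sigma(1/f^*)$ with $f$ irreducible; it says nothing about general elements of $R^*$, and in particular nothing about sums where cancellation can occur. Your ``support restriction'' on the $\gamma_i$ (that $X^q-Y^p$ does not appear among the irreducible factors of the $f_{ij}^*$) does not constrain the coefficients $\beta_i\in R^*$ at all, and those are free to involve $\theta_{p,q}$. The residue-field identification you propose---that the image of $R^*$ in $V/\m_V$ is a localization of $K[\bar\theta_{p,q}]$---is plausible but is itself a nontrivial statement requiring proof, and even granting it, you have not explained how it yields a contradiction from $v(\mu_{p,q})=p+q$ and $v(\gamma_i)\geq q$. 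The numerology alone (values in $\{0\}\cup[p,\infty)$, if that even holds for all of $R^*$) does not rule out $\mu_{p,q}=\sum\beta_i\gamma_i$.

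The paper's argument avoids this by a different organization. It writes the hypothetical generators as $\alpha_i=Y\gamma_i$ and expands the quotients $\gamma_i=\alpha_i/Y\in R^*$ (not the $\alpha_i$ themselves) as $\sum_j\sigma(1/f_{ij})$; then it takes $p=1$ and chooses $q$ strictly larger than every $\deg_X f_{ij}$. Since $Y\theta_{1,q}\in XR^*\cap YR^*$, one gets $\theta_{1,q}\in(\gamma_1,\ldots,\gamma_t)$, and because $v(\theta_{1,q})=0$ some individual term $\sigma(1/f_{ij})$ in the expansion of some $\gamma_i$ must itself have value $0$. Now Lemma~\ref{lem:overp} applies directly to that single term, giving $\sigma(1/f_{ij})=\theta_{1,q}^m/\delta$; an explicit cross-multiplication then forces $X\mid c_0(Y-X^q)^m$, contradicting $c_0\neq 0$. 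The key difference from your sketch is that the degree-based choice of $q$ is made \emph{after} fixing a decomposition of the $\gamma_i$, so the value-zero conclusion lands on a single atomic term where Lemma~\ref{lem:overp} applies without any extension or residue-field analysis.
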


\begin{proof}
Let $n\geq 3$ and suppose $(1/X_1) R_n \cap (1/X_2) R_n = (\alpha_1, \ldots, \alpha_t)R_n$ for some $\alpha_1, \ldots, \alpha_t \in R_n$.  Let $S = R_n[X_3, \ldots, X_n]$.  Let $L = K(X_3, \ldots, X_n)$.  By Lemma~\ref{lem:localizingatonevariable}, we have $S = R(L[X_1, X_2])$.  Let $(-)'$ denote the image of an element of $R_n$ in $S$.  Then $\alpha_j' \in (1/X_1) S \cap (1/X_2) S$ for all $j$, so $(\alpha_1', \ldots, \alpha_t') \subseteq (1/X_1)S \cap (1/X_2) S$.  Conversely let $u \in (1/X_1)S \cap (1/X_2)S$.  Then by clearing denominators, there is some positive integer $d$ such that $(X_3 \cdots X_n)^{-d} u \in (1/X_1) R_n \cap (1/X_2) R_n = (\alpha_1, \ldots, \alpha_t)R_n$.  Since $S = R_n[X_3, \ldots, X_n]$, it follows that $u \in (\alpha_1', \ldots, \alpha_t')S$.  Thus, $(1/X_1) S \cap (1/X_2)S$ is a finitely generated ideal, and we have reduced to the 2-dimensional case.  So from now on we assume $n=2$ and we rewrite $X = X_1$, $Y= X_2$.  For the rest of the proof, we pass to the $R^*$ notation.

Suppose $X R^* \cap YR^* = (\alpha_1, \ldots, \alpha_t)$ for some finite list of nonzero $\alpha_i \in R^*$; a contradiction will complete the proof.  Then there exist $\beta_i, \gamma_i \in R^*$ with $\alpha_i = X \beta_i = Y \gamma_i$ for all $i$.  Write $\gamma_i = c_i + \sum_{j=1}^{m_i} \sigma(1/f_{ij})$ where $c_i \in K$, $m_i \geq 0$, and each $f_{ij} \in D \setminus K$.  If some $c_i \neq 0$, then $\gamma_i$ is a unit by Lemma~\ref{lem:units}, 
so $Y/X = \gamma_i^{-1} \beta_i \in R^*$, which is false by \cite[Example 2.9]{nme-Euclidean}.  Hence $m_i \geq 1$ and $\gamma_i = \sum_{j=1}^{m_i} \sigma(1/f_{ij})$.  Choose some positive integer $q$ that is not a multiple of $\chr K$ and such that $q > \max\{ \deg_X {f_{ij}} \mid 1 \leq i \leq t$, $1 \leq j \leq m_i\}$.  Set $v := v_{1,q,q+1}$ and $\theta = \theta_{1,q} = \frac{X^qY}{Y-X^q}$ as in Lemma~\ref{lem:overp}.

Then $Y \theta = X^q\cdot (\theta + Y) \in XR^*$, so $Y \theta \in XR^* \cap YR^*$. It follows that $\theta \in (\gamma_1, \ldots, \gamma_t)$.  Since $v(\theta)=0$, it follows that for some pair $(i,j)$, we have $v(\sigma(1/f_{ij})) = 0$. By Lemma~\ref{lem:overp}, there exists some positive integer $m$ and some element $\delta \in K[\theta] \setminus (\theta)K[\theta]$ such that $\sigma(1/f_{ij}) = \theta^m \delta^{-1}$.  Write $f=f_{ij}$.

Let $d = \deg_{X}(f)$ and $e = \deg_Y(f)$.  Then by Lemma~\ref{lem:star}, we have $\sigma(1/f) = X^d Y^e / f^*$, where $\deg_X(f^*) \leq d$ and $\deg_Y(f^*) \leq e$.

Write $\delta = c_0 + \sum_{i=1}^s c_i \theta^i$, where each $c_i \in K$ and $c_0 \neq 0$.  Then \[
\frac{X^d Y^e}{f^*} = \sigma(1/f) = \theta^m\delta^{-1} = \frac{(X^qY)^m / (Y-X^q)^m}{c_0 + \displaystyle\sum_{i=1}^s \frac{c_i(X^q Y)^i} {(Y-X^q)^i}}.
\] 
If $m\geq s$, then the latter equation simplifies to an equation where both the numerator and denominator of each fraction is a polynomial, as follows: \[
\frac{X^dY^e}{f^*} = \frac{X^{qm}Y^m}{c_0 (Y-X^q)^m + \sum_{i=1}^s c_i (X^q Y)^i (Y-X^q)^{m-i}}.
\]
Since $q>d$ by the choice of $q$, we have $qm>d$. 
It follows that $X | c_0(Y-X^q)^m$, which contradicts the fact that $c_0 \in K^\times$.

On the other hand if $m<s$, then the equation simplifies with numerators and denominators being polynomials, as follows: \[
\frac{X^d Y^e}{f^*} = \frac{X^{qm}Y^m (Y-X^q)^s}{c_0(Y-X^q)^s + \sum_{i=1}^s c_i (X^q Y)^i (Y-X^q)^{s-i}}.
\]
Cross-multiplying, we have \[
X^{qm}Y^m (Y-X^q)^s f^* = X^d Y^e \cdot (c_0(Y-X^q)^s + \sum_{i=1}^s c_i (X^q Y)^i (Y-X^q)^{s-i}).
\]
Since $qm>d$, it follows that $X | c_0 (Y-X^q)^s$, which again contradicts the fact that $c_0 \in K^\times$.
\end{proof}

Recall 
that a ring is \emph{coherent} if every finitely generated ideal is finitely presented.  The coherent rings include the Noetherian rings and also all valuation domains (See \cite[Chapter I, \S2, Exercise 12 and Chapter VI, \S1, Exercise 3]{Bour-CA}).

\begin{cor}\label{cor:notcoh}
For any $n\geq 2$, the ring $R_n$ is not coherent.  Hence it is non-Noetherian.
\end{cor}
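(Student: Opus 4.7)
The plan is to derive this immediately from Theorem~\ref{thm:notfc}, using the standard fact that in a coherent ring the intersection of two finitely generated ideals is finitely generated. The key input is already available: we know $(1/X_1)R_n \cap (1/X_2)R_n$ is not finitely generated, so it suffices to recall why coherence forces the opposite conclusion for such an intersection.

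First I would argue the reduction from coherence to finite conductor. Consider the $R_n$-linear map $\phi: R_n \oplus R_n \to R_n$ given by $(r,s) \mapsto (1/X_1)r - (1/X_2)s$. The image is the finitely generated ideal $I := (1/X_1, 1/X_2)R_n$. If $R_n$ were coherent, then $I$ would be finitely presented, so by Schanuel (or directly), $\ker \phi$ would be a finitely generated submodule of $R_n \oplus R_n$. From a finite generating set $\{(r_i,s_i)\}_{i=1}^N$ of $\ker \phi$, the elements $(1/X_1)r_i = (1/X_2)s_i$ would generate $(1/X_1)R_n \cap (1/X_2)R_n$, since any $z$ in this intersection can be written $z = (1/X_1)r = (1/X_2)s$ with $(r,s) \in \ker \phi$.

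Then the conclusion is immediate: Theorem~\ref{thm:notfc} shows $(1/X_1)R_n \cap (1/X_2)R_n$ is not finitely generated, so $R_n$ cannot be coherent. The final sentence follows because every Noetherian ring is trivially coherent (finitely generated modules over Noetherian rings are finitely presented).

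I do not expect any real obstacle here; this is a short formal deduction from Theorem~\ref{thm:notfc} together with the standard characterization of coherence in terms of kernels of maps between finitely generated free modules. The only thing to be careful about is presenting the ``coherent implies finite conductor'' step with enough detail (or citing \cite{Bour-CA} or a standard reference, as the authors already do above the corollary statement) so the reader does not need to reconstruct it.
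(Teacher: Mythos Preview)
Your proposal is correct and follows essentially the same approach as the paper: both deduce non-coherence from Theorem~\ref{thm:notfc} via the implication ``coherent $\Rightarrow$ finite conductor.'' The only difference is cosmetic---the paper cites \cite[Theorem 2.2]{Ch-dpmod} for that implication, whereas you spell out the standard kernel-of-a-free-module argument directly.
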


\begin{proof}
This follows from Theorem~\ref{thm:notfc} and \cite[Theorem 2.2]{Ch-dpmod}.
\end{proof}

The next result is notably unlike the behavior of localized polynomial rings.

\begin{thm}\label{thm:notic}
    For any $n\geq 2$, $R_n$ is not integrally closed.
\end{thm}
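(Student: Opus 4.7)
The plan is to reduce to the two-variable case and then exhibit an explicit element that is integral over $R^*$ but not in $R^*$, leveraging the non-UFD structure established in Remark~\ref{rem:notUFD} and the valuation-theoretic tools developed in Lemmas~\ref{lem:ordervaluation} and~\ref{lem:overp}.

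\textbf{Reduction to $n=2$.} I would first argue that it suffices to prove the claim for $R_2$. Indeed, by Proposition~\ref{pr:loclower} applied with $j=2$, the ring $R_n$ has a prime ideal $P$ such that $R_{n,P}$ is isomorphic to the reciprocal complement of $L[X_1,X_2]$, where $L=K(X_3,\ldots,X_n)$ is a field. Since localization preserves integral closedness (any localization of an integrally closed domain is again integrally closed), if $R_n$ were integrally closed then so would be $R_{n,P}$. Thus it suffices to exhibit, for the base field $L$, an element of $\Frac(R(L[X_1,X_2]))$ that is integral over $R(L[X_1,X_2])$ but does not lie in it. Because the argument at $n=2$ works uniformly over any field, this reduction goes through.

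\textbf{Construction at $n=2$.} Working with $R^* = R_2^*$ (so that $D=K[X,Y]\subseteq R^*$), I would exploit the structure from Remark~\ref{rem:notUFD}: $s = XY/(X{+}Y)$, $t = X^2/(X{+}Y)$, $u = Y^2/(X{+}Y)$ are all in $R^*$ and satisfy $s^2 = tu$, with all three of order-valuation $1$. In an integrally closed domain, such a relation forces existence of a ``square-root'' element. Concretely, I would take a candidate built from this relation — for instance an element $\alpha$ in the fraction field such that $\alpha$ satisfies a monic polynomial over $R^*$ arising from the identity $s^2 = tu$ together with multiplications by elements of $D$, but whose leading form with respect to the order valuation lies outside the image of $\m/\m^2$ in $\m_W/\m_W^2 \cong K(Y/X)$.

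\textbf{Verifying integrality.} I would exhibit an explicit monic polynomial $P(Z)\in R^*[Z]$ with $P(\alpha)=0$, checking the identity by clearing denominators. The plan is to use the relation $s^2=tu$ to rewrite a naive quadratic $(X{+}Y)Z - s^2=0$ (which is non-monic) as a monic equation by pairing with another element of $R^*$ supplied by the factorization. Simultaneously, the valuations $V_{p,q,h}$ from Lemma~\ref{lem:overp} must be checked to ensure $v(\alpha)\geq 0$ in each such $V$: this guarantees $\alpha$ lies in every valuation overring of $R^*$ that we know of, consistent with integrality.

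\textbf{Verifying $\alpha\notin R^*$.} Here I would use the order valuation $w$ of Lemma~\ref{lem:ordervaluation}: $R^*\subseteq W$, with maximal ideal $\m = \m_W\cap R^*$ and residue field $R^*/\m\cong K$ (as follows from Lemma~\ref{lem:units}, since every unit of $R^*$ has the form $c+\mu$ with $c\in K^\times$ and $\mu\in\m$). Consequently, elements of $R^*$ with $w=0$ have residue in $K\subseteq W/\m_W = K(Y/X)$, and the leading forms of elements of $\m\cap R^*$ in $\m_W/\m_W^2$ lie in a specific $K$-subspace of $K(Y/X)$ generated by coefficients of $\sigma(1/f)$ for linear $f$. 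I would compute the leading form of $\alpha$ explicitly and show it falls outside either $K$ (if $w(\alpha)=0$) or outside the corresponding graded piece coming from $R^*$ (if $w(\alpha)\geq 1$), thereby giving $\alpha\notin R^*$.

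\textbf{The main obstacle.} The hardest step is producing an element $\alpha$ that simultaneously (i) genuinely satisfies a monic polynomial over $R^*$ — not merely a non-monic linear relation like $\alpha(X{+}Y)\in R^*$ — and (ii) has a leading form at $w$ that is demonstrably outside the subspace of $K(Y/X)$ arising from $\m/\m^2$. Naive candidates such as $X/Y$ or $Y/(X{+}Y)$ either fail integrality (by a $v_X$- or $v_Y$-valuation argument) or fail non-membership; the correct $\alpha$ must sit on a delicate edge, and pinning it down is where the real work lies.
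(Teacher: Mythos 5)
Your reduction to the two-variable case is fine: Proposition~\ref{pr:loclower} gives a prime $P$ with $R_{n,P}\cong R(L[X_1,X_2])$, and since localizations of integrally closed domains are integrally closed, non-integral-closedness of the two-variable reciprocal complement (over an arbitrary base field) does imply it for $R_n$. This is a legitimate alternative to the paper's reduction, which instead uses Proposition~\ref{pr:downdim} ($R_2^*=R_n^*\cap K(X,Y)$) so that the very same witness element works in $R_n^*$.

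The genuine gap is in the two-variable case itself: you never produce the witness. The heuristic you lean on is wrong — the relation $s^2=tu$ among elements already lying in $R^*$ (Remark~\ref{rem:notUFD}) carries no obstruction to integral closedness, since failure of unique factorization does not imply failure of normality (e.g.\ $K[x,y,z]/(xy-z^2)$ is integrally closed but not a UFD). So ``a square-root element forced by $s^2=tu$'' does not exist as a strategy; you need a concrete $\alpha\in F\setminus R^*$ with a monic equation over $R^*$, and your proposal explicitly defers exactly this step (``pinning it down is where the real work lies''). The paper's proof supplies it: for coprime $1<p<q$ prime to $\chr K$ and $qd-pc=1$, the element $\beta=\frac{X^{2q-c}Y^d}{X^q-Y^p}$ satisfies $\beta^p\in R^*$ by an explicit identity expressing $\beta^p$ as a product of $X$ and powers of $\theta=\frac{X^qY^p}{X^q-Y^p}$ and $\theta+X^q$, so $\beta$ is integral. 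Moreover, your proposed non-membership test — leading forms with respect to the order valuation $w$ of Lemma~\ref{lem:ordervaluation} — is not adequate as stated: you would need to characterize which classes in $\m_W/\m_W^2$ arise from elements of $\m$, and sums of generators $\sigma(1/f)$ (with $f$ of arbitrary degree, not just linear) can cancel leading terms, so the ``specific $K$-subspace'' you invoke is not established and is not obviously proper. The paper instead uses the finer valuations $v_{p,q,pq+1}$ of Lemma~\ref{lem:overp}, whose key feature is a value gap on the generators of $R^*$: each $\sigma(1/f^*)$ has value either $0$ (and then is of the special form $\theta^m\delta^{-1}$) or at least $p$. Since $v(\beta)=1$ lies strictly in this gap, $\beta\notin R^*$. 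Without either an explicit integral element or a workable non-membership criterion, your outline does not yet constitute a proof.
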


\begin{proof}
We first consider the 2-dimensional case.  Let $p,q$ be relatively prime integers with $1<p<q$, such that neither $p$ nor $q$ is a multiple of $\chr K$.  By elementary number theory, there is a unique pair of  integers $c,d$ with $qd-pc=1$, $0< c < q$, and $0<d<p$.  
Consider the element $\beta := \beta_{p,q} :=\frac{X^{2q-c}Y^d}{X^q - Y^p} \in F$.  We claim that $\beta$ is integral over $R^*$ -- in fact, $\beta^p \in R^*$ -- but $\beta \notin R^*$.

To see that $\beta^p \in R^*$, simply note the following: \[
\beta^p = \frac{X^{(2q-c)p} Y^{dp}}{(X^q - Y^p)^p} = \left(\frac{X^q Y^p}{X^q - Y^p}\right)^d \cdot \left(\frac{X^q Y^p}{X^q - Y^p} + X^q\right)^{p-d} \cdot X,
\]
which is in $R^*$ since $X \in R^*$ and $\sigma(1/(Y^p - X^q)) = X^q Y^p / (X^q-Y^p) \in R^*$.

On the other hand, let $v = v_{p,q,pq+1}$.  Then \[
v(\beta) = v\left(\frac{X^{2q-c} Y^d}{X^q - Y^p}\right) = (2q-c)p + qd - 2pq=qd-pc=1.
\]

Suppose $\beta \in R^*$.  Since $v(\beta)>0$, it follows that $\beta \in \m_V \cap R^* \subseteq \m$, the maximal ideal of $R^*$.  So by Theorem~\ref{thm:islocalpoly}, we have $\beta = \sum_{i=1}^t \sigma(1/f_i^*)$ for nonconstant polynomials $f_i \in K[X,Y].$  By reordering, let $f_1, \ldots, f_s$ be the polynomials whose only irreducible factor is $X^q - Y^p$ up to associate and multiplicity, whereas each of $f_{s+1}, \ldots, f_t$ has an irreducible factor not associate to $X^q-Y^p$.  Set $\gamma := \sum_{i=1}^s \sigma(1/f_i^*)$ and $\delta := \sum_{i=s+1}^t \sigma(1/f_i^*)$, so that $\beta =\gamma + \delta$.  By Lemma~\ref{lem:overp}, we have $v(\delta)\geq p$, so that since $v(\beta) = 1<p$, we have $v(\gamma) = 1$.

On the other hand, for $1 \leq i \leq s$, there exist $\lambda_i \in K$ and $\ell_i \in \N_0$ with $f_i = \lambda_i (X^q - Y^p)^{\ell_i}$.  Thus, $\sigma(1/f^*) = \lambda_i \theta^{\ell_i}$, so that by Lemma~\ref{lem:overp} we have $v(\sigma(1/f_i^*)) = \ell_i v(\theta) = 0$. 
Thus, either $\gamma=0$ or $v(\gamma)=0$, either of which is a contradiction.  Hence, $\beta \notin R^*$.

Finally, we pass to the $n$-dimensional case.  We have $\beta^p \in R_2^* \subseteq R_n^*$.  Since $R_2^* = R_n^* \cap K(X,Y)$ by Proposition~\ref{pr:downdim}, and $\beta \in K(X,Y) \setminus R_2^*$, it follows that $\beta \notin R_n^*$.
\end{proof}

\section{The abundance of prime ideals in $R$}

In this section, we show that $R_n$, which as we have seen is far from Noetherian when $n> 1$ (see Corollary~\ref{cor:notcoh}),  does have infinitely many prime ideals of each height other than $0$ and $n$, a property enjoyed by any $n$-dimensional Noetherian ring, but not by some non-Noetherian rings (e.g. any valuation domain of dimension at least 2).  We start with the following result to bootstrap our efforts.

\begin{prop}\label{pr:infprimes1}
For any $n\geq 2$, $R_n$ has infinitely many 
height one prime ideals.
\end{prop}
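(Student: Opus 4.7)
The plan is to reduce to the two-variable case using Proposition~\ref{pr:loclower} and then to exploit the family of valuations $v_{p,q,pq+1}$ from Notation~\ref{nota:val} to produce infinitely many height-one primes in $R_2^*$ (hence in $R_2 \cong R_2^*$). For the reduction, apply Proposition~\ref{pr:loclower} with $j=2$: there is a prime $P$ of $R_n$ with $(R_n)_P = R(L[X_1, X_2])$, where $L := K(X_3, \ldots, X_n)$. Since prime ideals of $(R_n)_P$ correspond bijectively to prime ideals of $R_n$ contained in $P$, and heights are preserved under this correspondence, it suffices to produce infinitely many height-one primes in the two-variable reciprocal complement over an arbitrary base field. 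From now on I take $n=2$, write $X := X_1$ and $Y := X_2$, and work in $R^* := R_2^*$.

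For each pair $(p,q)$ of coprime positive integers with $p < q$ and with neither $p$ nor $q$ a multiple of $\chr K$, let $v := v_{p,q,pq+1}$ and let $V := V_{p,q,pq+1}$ be the associated objects of Notation~\ref{nota:val}. Lemma~\ref{lem:overp} gives $R^* \subseteq V$, so the center $\p_{p,q} := \m_V \cap R^*$ is a prime ideal of $R^*$. To check $\hgt \p_{p,q} = 1$, note that $\dim R^* = 2$ by Theorem~\ref{thm:dim}, and that $v(X) = p > 0$ puts $X$ into $\p_{p,q}$, so $\p_{p,q} \neq (0)$. It remains to see that $\p_{p,q}$ is strictly contained in the unique maximal ideal $\m$ of $R^*$: the element $\theta_{p,q} := \sigma(1/(X^q-Y^p)) = X^q Y^p/(Y^p - X^q)$ lies in $\m$ by Theorem~\ref{thm:islocalpoly}, but Lemma~\ref{lem:overp} gives $v(\theta_{p,q}) = 0$, so $\theta_{p,q} \in \m \setminus \p_{p,q}$.

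The remaining point is distinctness: I claim that whenever $(p,q) \neq (p',q')$ one has $\theta_{p',q'} \in \p_{p,q} \setminus \p_{p',q'}$. From $v(X) = p$ and $v(Y) = q$ one gets $v(X^{q'}) = pq'$ and $v(Y^{p'}) = qp'$, and since $(p,q)$ and $(p',q')$ are distinct coprime pairs one has $pq' \neq qp'$. Thus $v(X^{q'} - Y^{p'}) = \min(pq', qp')$ and hence $v(\theta_{p',q'}) = (pq' + qp') - \min(pq', qp') = \max(pq', qp') > 0$, which puts $\theta_{p',q'} \in \p_{p,q}$. On the other hand $v_{p',q',p'q'+1}(\theta_{p',q'}) = 0$ by Lemma~\ref{lem:overp}, so $\theta_{p',q'} \notin \p_{p',q'}$. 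Since infinitely many admissible coprime pairs exist in every characteristic, this yields infinitely many distinct height-one primes of $R^*$. The main obstacle is precisely this distinctness step, but the explicit formula for $v(\theta_{p',q'})$ reduces it to a short valuation computation once Lemma~\ref{lem:overp} is available.
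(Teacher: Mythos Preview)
Your proof is correct and follows essentially the same approach as the paper's: both arguments use the centers $\p_{p,q}$ of the valuations $v_{p,q,pq+1}$ on $R^*$, show $\hgt \p_{p,q}=1$ via $\theta_{p,q}\in\m\setminus\p_{p,q}$, and distinguish the primes by the computation $v_{p,q,pq+1}(\theta_{p',q'})=\max(pq',qp')>0$ for $(p',q')\neq(p,q)$, reducing from $n$ to $2$ via Proposition~\ref{pr:loclower}. The only cosmetic differences are that you place the reduction step first and explicitly verify $\p_{p,q}\neq(0)$ via $X\in\p_{p,q}$.
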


\begin{proof}
 In this proof, we use $R^*$ notation.

First suppose $n=2$.  For any relatively prime pair $(p,q)$ of positive integers with $p<q$, let $V, v$, $\theta$, and $h$ be as in Lemma~\ref{lem:overp}, with $h=pq+1$. Let $\p = \p_{p,q}$ be the contraction of $\m_V$ to $R^*$.  Then since $v(\theta) = 0$, we have $\theta \notin \p$.  Since $\p$ is a nonzero prime but not the maximal ideal of $R^*$ (as $\theta \in \m$), it follows that $\p$ is a height one prime.

On the other hand, let $(r,s)$ be a different pair of relatively prime positive integers with $r<s$.  We claim that $v(X^s-Y^r)=\min\{v(X^s), v(Y^r)\} = \min\{ps,qr\}$.  Otherwise we would have $v(X^s) = v(Y^r)$, whence $ps=qr$.  But then by assumption of relatively prime pairs, we would have $p=r$ and $q=s$, contradicting the assumption of distinctness. Therefore, $v(\theta_{r,s}) = v(X^s Y^r) - v(X^s-Y^r) = \max\{qr, ps\}$. Thus, $\theta_{r,s} \in \p_{p,q}$.  But by the proof of Lemma~\ref{lem:overp}, $\theta_{r,s} \notin \p_{r,s}$.  Hence, $\p_{p,q} \neq \p_{r,s}$.  Since there are infinitely many such pairs of integers, it follows that $R^*$ has infinitely many height one primes.

Finally, we drop the assumption that $n=2$.  By Proposition~\ref{pr:loclower}, there is a prime ideal $Q$ of $R^*$ such that $R^*_Q$ is isomorphic to the \rco\ of $L[X,Y]$ for some field $L$.  But then by the dimension 2 part of the proof above, $R^*_Q$ has infinitely many height one primes.  Thus, there are infinitely many height one primes of $R^*$ that are contained in $Q$.
\end{proof}

\begin{nota}\label{nota:trivext}
Recall that given a valuation ring with fraction field $K$ and an indeterminate $t$ over $K$, the ring $V(t)$ is a valuation ring  of $K(t)$ called the trivial extension of $V$. 
Given $\phi= \sum_{j=0}^e f_j t^j$ with $f_j \in K$, then the value of $\phi$ with respect to $V(t)$ is $\min_{j} \lbrace  v(f_j)\rbrace$ (See \cite[p. 218]{Gil-MIT}).

By Lemma~\ref{lem:Xiprimes}, there is a prime ideal $Q \in \Spec R_n$ such that $(R_n)_Q = R(K(X_n)[X_1, \ldots, X_{n-1}])$.  Fix this prime for the next two lemmas.
\end{nota}

\begin{lemma}\label{lem:trivext}
Let $V$ be a valuation overring of $R_{n-1}$; then the trivial extension $V(X_n)$ is an overring of $(R_n)_Q$, where $Q$ is as in Notation~\ref{nota:trivext}.
\end{lemma}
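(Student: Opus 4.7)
The plan is to reduce the inclusion $(R_n)_Q \subseteq V(X_n)$ to a natural set of generators. By Lemma~\ref{lem:localizingatonevariable} (applied to the prime $Q$ described in Notation~\ref{nota:trivext}), one has $(R_n)_Q = R(K(X_n)[X_1, \ldots, X_{n-1}])$, and this ring is generated as a $K$-algebra by the reciprocals $1/g$ of the nonzero elements $g \in K(X_n)[X_1, \ldots, X_{n-1}]$. Since $K \subseteq V \subseteq V(X_n)$, it is enough to prove that each such $1/g$ belongs to $V(X_n)$.

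Given such a $g$, I would first clear denominators in $X_n$, writing $g = \tilde g / h$ with $\tilde g \in D_n \setminus \{0\}$ and $h \in K[X_n] \setminus \{0\}$, and then regroup $\tilde g$ as a polynomial in $X_n$ over $D_{n-1}$, say $\tilde g = \sum_{j=0}^e a_j X_n^j$ with $a_j \in D_{n-1}$ not all zero. Letting $v$ denote the valuation of $V$ on $F_{n-1}$ and $w$ the valuation of $V(X_n)$ on $F_n$, the defining formula for the trivial extension gives $w(\tilde g) = \min\{v(a_j) : a_j \neq 0\}$ and $w(h) = 0$, since the coefficients of $h$ lie in $K^\times \subseteq V^\times$. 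Extending $w$ multiplicatively to $F_n$ therefore yields $w(g) = w(\tilde g)$.

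The key estimate is that $w(g) \leq 0$. For every nonzero $a_j \in D_{n-1}$, the reciprocal $1/a_j$ is one of the canonical generators of $R_{n-1} = R(D_{n-1})$, and hence lies in $V$ by hypothesis; thus $v(a_j) \leq 0$. Minimizing over the nonzero $a_j$ gives $w(g) \leq 0$, so $w(1/g) \geq 0$, i.e., $1/g \in V(X_n)$, as required.

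I do not foresee a substantive obstacle. The only care needed is in reconciling the two natural ways of viewing $g$ (as an element of $K(X_n)[X_1, \ldots, X_{n-1}]$ versus as a polynomial in $X_n$ with coefficients in $F_{n-1}$) so that the trivial-extension formula applies cleanly, and in the one-line observation that because $R_{n-1}$ is built out of reciprocals of elements of $D_{n-1}$, any overring of $R_{n-1}$ automatically forces $v$ to be nonpositive on nonzero elements of $D_{n-1}$.
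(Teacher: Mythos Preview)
Your proposal is correct and follows essentially the same approach as the paper: both reduce to showing $1/g \in V(X_n)$ for generators $g$, clear denominators to assume $g \in D_n$, expand $g$ as a polynomial in $X_n$ with coefficients in $D_{n-1}$, and then use that $V \supseteq R_{n-1}$ forces $v(a_j) \leq 0$ on each nonzero coefficient so that the trivial-extension formula gives $w(1/g) \geq 0$. The only cosmetic difference is that the paper dispatches the $K(X_n)$-denominator in one line via $K(X_n) \subseteq V(X_n)$, whereas you write out the factorization $g = \tilde g/h$ explicitly.
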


\begin{proof}
By the comment before the Lemma, it suffices to show that $1/\phi \in V(X_n)$ for every $\phi \in K(X_n)[X_1, \ldots, X_{n-1}]$.  Since $K(X_n) \subseteq V(X_n)$, we may assume $\phi \in K[X_1, \ldots, X_n]$.  Let $v^\ast$ be the valuation for $V(X_n)$; write $\phi = \sum_{j=0}^e f_j X_n^e$ with $f_j \in K[X_1, \ldots, X_{n-1}]$.  Since $V$ is an overring of $R_{n-1}$, we have that $v(f_j) \leq 0$ whenever $f_j \neq 0$.  Hence, $v^\ast(1/\phi) = -\min \{v(f_j) \mid 0 \leq j \leq e$ and $f_j \neq 0\}\geq 0$.
\end{proof}

\begin{lemma}\label{lem:htineq}
Let $\p \in \Spec R_{n-1}$, and let $V$ be a valuation overring of $R_{n-1}$ centered on $\p$.  Let $\p'$ be the center of $V(X_n)$ in $R_n$.  Then $\hgt \p' \geq \hgt \p$, with equality if $\hgt \p \in \{0,n-2,n-1\}$.
\end{lemma}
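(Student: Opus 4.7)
My plan is to prove the inequality $\hgt \p' \geq \hgt \p$ by a chain-lifting argument via valuation overrings, then derive the three equality cases from the inequality combined with the structural fact that $\p' \subseteq Q$.

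For the inequality, set $r := \hgt \p$ and fix a saturated chain $\p_0 \subsetneq \p_1 \subsetneq \cdots \subsetneq \p_r = \p$ in $R_{n-1}$. The goal is to produce a strict chain $\p_0' \subsetneq \p_1' \subsetneq \cdots \subsetneq \p_r' = \p'$ in $R_n$. I will first replace $V$, if necessary, with a valuation overring $\widetilde V$ of $R_{n-1}$ that exhibits a chain of primes $\q_0 \subsetneq \q_1 \subsetneq \cdots \subsetneq \q_r \subseteq \m_{\widetilde V}$ with $\q_i \cap R_{n-1} = \p_i$ and with $\widetilde V_{\q_r} = V$. Such a $\widetilde V$ can be constructed from $V$ by composition with a suitable valuation on its residue field (or built from $R_{n-1,\p}$ and the prescribed chain via the classical theorem that every chain of primes in a local ring is realized by some valuation overring). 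Setting $V_i := \widetilde V_{\q_i}$ then gives a nested family $V_0 \supseteq V_1 \supseteq \cdots \supseteq V_r = V$ with $V_i$ centered at $\p_i$. Taking trivial extensions, which respects localization at primes under the natural bijection $\Spec V \leftrightarrow \Spec V(X_n)$, yields $V_0(X_n) \supseteq \cdots \supseteq V_r(X_n) = V(X_n)$, so the centers $\p_i' := \m_{V_i(X_n)} \cap R_n$ satisfy $\p_i' \subseteq \p_{i+1}'$. Since $V_i(X_n) \cap F_{n-1} = V_i$, we get $\p_i' \cap R_{n-1} = \p_i$, so the chain is strict and $\p_r' = \p'$.

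The three equality cases rest on the following observations. By Lemma~\ref{lem:trivext}, $V(X_n)$ is an overring of $(R_n)_Q$, so $\p' \subseteq Q$. The height $\hgt Q = n-1$ follows from the proof of Theorem~\ref{thm:dim}. Also, $Q \cap R_{n-1} = \m_{R_{n-1}}$, because the inclusion $R_{n-1} \hookrightarrow R_n[X_n] = (R_n)_Q$ sends non-units to non-units --- both rings being local with maximal ideal generated by the reciprocals of non-constant polynomials. With these in hand:
\begin{itemize}
\item If $\hgt \p = 0$, then $\p = 0$ forces $V = F_{n-1}$, so $V(X_n) = F_n$ and $\p' = 0$.
\item If $\hgt \p = n - 1$, the inequality gives $\hgt \p' \geq n-1$, while $\p' \subseteq Q$ gives $\hgt \p' \leq n-1$, hence equality.
\item If $\hgt \p = n - 2$, then $\p \subsetneq \m_{R_{n-1}} = Q \cap R_{n-1}$ forces $\p' \subsetneq Q$, whence $\hgt \p' < n-1$; combined with the inequality, $\hgt \p' = n-2$.
\end{itemize}

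The principal obstacle is the chain-lifting step: justifying the existence of the refinement $\widetilde V$ realizing the entire chain $\p_0 \subsetneq \cdots \subsetneq \p_r$ through its spectrum, in a way compatible with the given $V$ (so that the center $\p'$ really appears as the contraction of $\widetilde V_{\q_r}(X_n) = V(X_n)$). I expect this to hinge on the classical existence of valuations dominating a local ring along a prescribed chain of primes, together with the behavior of trivial extensions under specialization.
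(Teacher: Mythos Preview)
Your treatment of the three equality cases is correct and matches the paper's argument closely (the paper phrases the $n-1$ case via ``$X_n^{-1}\notin\p'$ so $\p'\neq\m$'' rather than via $\p'\subseteq Q$, but this is immaterial).

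The gap is in the chain-lifting step, and it is exactly where you flag it.  You demand a valuation ring $\widetilde V$ carrying a chain $\q_0\subsetneq\cdots\subsetneq\q_r$ that contracts to the prescribed chain $\p_0\subsetneq\cdots\subsetneq\p_r$ \emph{and} satisfies $\widetilde V_{\q_r}=V$.  The second condition forces $\q_0,\ldots,\q_{r-1}$ to lie below $\q_r$ in $\widetilde V$, hence to correspond to nonmaximal primes of $V$ itself.  But $V$ is \emph{given} and may well have rank one; then it has no nonzero nonmaximal primes at all, so no such $\widetilde V$ exists once $r\geq 2$.  Composing $V$ with a valuation on its residue field does not help: that operation adds primes \emph{above} $\m_V$ in the composite, not below.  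Your parenthetical alternative (build $\widetilde V$ from scratch over $(R_{n-1})_\p$ along the chain) does produce the chain, but then $V_r:=\widetilde V_{\q_r}$ need not equal $V$, so you only get $\hgt\p_r'\geq r$ for the center $\p_r'$ of $V_r(X_n)$, not for $\p'$.

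What rescues the argument---and what the paper proves directly---is that the center of $V(X_n)$ in $R_n$ depends only on $\p$, not on the particular $V$.  Concretely: for $\phi=\sum_j f_j X_n^j\in D_n$, one has $1/\phi\in\p'$ iff $v^\ast(1/\phi)>0$ iff $\min_j v(f_j)<0$ iff some $1/f_{k}\in\m_V\cap R_{n-1}=\p$; combined with Proposition~\ref{pr:gensprimes} (every prime of $R_n$ is generated by such reciprocals), this pins down $\p'$ from $\p$ alone.  The paper uses exactly this computation to show that for \emph{any} valuation $W$ centered on a prime $\q\subsetneq\p$, the center $\q'$ of $W(X_n)$ satisfies $\q'\subseteq\p'$, and then runs induction on $\hgt\p$ without ever needing the valuations to fit inside a single $\widetilde V$.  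If you insert this independence-of-$V$ statement, your alternative construction of $\widetilde V$ (or simply a choice of one valuation at each $\p_i$) completes the proof.
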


\begin{proof}
Let $i=\hgt \p$. If $i=0$, then $\p=(0)$, so that $V = \Frac R_{n-1} = K(X_1, \ldots, X_{n-1})$, whence $\p'=(0)$.  Assume by induction that $i\geq 1$ and the inequality holds for all primes with smaller height.  Note that 
$\p' \cap R_{n-1} = \p$.

Let $\q \subsetneq \p$ with $\q \in \Spec R_{n-1}$ and $\hgt \q = i-1$.  Let $W$ be a valuation overring of $R_{n-1}$ centered on $\q$; let $\q'$ be the center of $W(X_n)$ in $R_n$.  To show that $\q' \subseteq \p'$, it suffices by Proposition~\ref{pr:gensprimes} to show that for any $\phi \in K[X_1, \ldots, X_n]$ with $\frac 1\phi \in \q'$, we have $\frac 1\phi \in \p'$.  Write $\phi = \sum_{j=0}^e f_j X_n^j$, $f_j \in K[X_1, \ldots, X_{n-1}$.  Since $\frac 1\phi \in \q'$, there is some $0 \leq k \leq e$ with $\frac 1{f_k} \in \q$, by the way the valuation on $W(X_n)$ is defined.  Thus, $\frac 1{f_k} \in \p$, so $v^\ast(\frac 1\phi) = -\min\{v(f_j) \mid 0 \leq j \leq e\}\geq -v(f_k) > 0$.  Hence $\q' \subseteq \p'$.  On the other hand $\q' \neq \p'$, since for any $\alpha \in \p \setminus \q$, we have $\alpha \in \p' \setminus \q'$.  Thus, $\q' \subsetneq \p'$, so that \[
\hgt \p' \geq 1 + \hgt \q' \geq 1 + (i-1) = i,
\]
with the second inequality by inductive hypothesis.

Suppose $i = n-1$. Since $X_n \notin \p'$, we have that $\p'$ is not the maximal ideal of $R_n$, so that $\hgt \p' \leq n-1$.  But also $\hgt \p' \geq \hgt \p = n-1$, so that $\hgt \p' = n-1$.

Finally, suppose $i = n-2$.  Let $\m$ be the maximal ideal of $R_{n-1}$.  Since $\m$ contains all nonunits of $R_{n-1}$ and $\hgt \m =n-1$, there is some $\alpha \in \m \setminus \p$.  Thus $\alpha \notin \p'$.  But since $V(X_n) \supseteq (R_n)_Q$ by Lemma~\ref{lem:trivext}, we have $\p' \subseteq Q$.  Moreover, the containment must be strict, since $\alpha \in \m \subseteq Q$ but $\alpha \notin \p'$.  Thus, $\hgt \p' \leq n-2$, so $\hgt \p' = \hgt \p = n-2$. 
\end{proof}

\begin{lemma}\label{lem:pullbackvaluedprimes}
Let $\p$ be a prime ideal of $R_{n-1}$ of height $n-2$. Let $V$ a valuation ring of $R_{n-1}$ centered on $\p$.  Let $\kappa_V$ be the residue field of $V$ and let $\pi: V(X_n) \onto \kappa_V(X_n)$ be the canonical surjection.  Let $W := \pi^{-1}(\kappa_V[X_n^{-1}]_{(X_n^{-1})})$.  Then $W$ is a valuation overring of $R_n$ centered on a prime ideal $\ia$ with $\hgt \ia = n-1$, such that $\ia \cap R_{n-1} = \p$.
\end{lemma}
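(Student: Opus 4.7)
The ring $W$ is a composite valuation ring: start with the trivial extension $V(X_n)$, which is a valuation ring of $F_n$ with residue field $\kappa_V(X_n)$, and pull back the DVR $\kappa_V[X_n^{-1}]_{(X_n^{-1})}$ of $\kappa_V(X_n)$ under $\pi$. By the standard composite-valuation construction, $W$ is automatically a valuation ring of $F_n$ with maximal ideal $\m_W = \pi^{-1}\!\left((X_n^{-1})\kappa_V[X_n^{-1}]_{(X_n^{-1})}\right)$. The first step is to show $R_n \subseteq W$. Since $R_n \subseteq (R_n)_Q \subseteq V(X_n)$ by Lemma~\ref{lem:trivext}, every generator $1/f$ of $R_n$ already lies in $V(X_n)$, so it suffices to verify $\pi(1/f) \in \kappa_V[X_n^{-1}]_{(X_n^{-1})}$ for $0\neq f\in D_n$. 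Writing $f = \sum_j f_j X_n^j$ with $f_j \in K[X_1, \ldots, X_{n-1}] \subseteq V$, each nonzero $f_j$ satisfies $v(f_j) \leq 0$ because $1/f_j \in R_{n-1} \subseteq V$. If $v^*(f) < 0$, then $\pi(1/f) = 0$; and if $v^*(f) = 0$, then $\pi(f) \in \kappa_V[X_n]$ is a nonzero polynomial in $X_n$, so $\pi(1/f) = 1/\pi(f)$ has numerator degree at most denominator degree and thus lies in the DVR.

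Set $\ia := \m_W \cap R_n$. The equality $\ia \cap R_{n-1} = \p$ reduces to the fact that for $\alpha \in R_{n-1} \subseteq V$, the image $\pi(\alpha) \in \kappa_V$ is a constant, and the only constant contained in the maximal ideal of $\kappa_V[X_n^{-1}]_{(X_n^{-1})}$ is $0$. Hence $\alpha \in \ia$ iff $\pi(\alpha) = 0$ iff $\alpha \in \m_V \cap R_{n-1} = \p$.

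For $\hgt \ia = n-1$, I would bound from both sides. For the upper bound, since $\hgt \p = n-2 < n-1$ there is an element $\alpha$ in the maximal ideal of $R_{n-1}$ outside $\p$; such an $\alpha$ lies in the maximal ideal of $R_n$ (it is a sum of reciprocals of nonconstant polynomials in $D_{n-1}\subseteq D_n$) but not in $\ia$ by the previous paragraph, so $\ia$ is strictly contained in the maximal ideal of $R_n$ and $\hgt \ia \leq n-1$ by Theorem~\ref{thm:dim}. For the lower bound, let $\p'$ denote the center of $V(X_n)$ in $R_n$. By Lemma~\ref{lem:htineq}, $\hgt \p' = n-2$, and the inclusion $\m_{V(X_n)} \subseteq \m_W$ yields $\p' \subseteq \ia$. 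The containment is strict because $1/X_n \in \ia$ (its image under $\pi$ is $X_n^{-1}$, which lies in the maximal ideal of the DVR), whereas $1/X_n \notin \p'$ (by Lemma~\ref{lem:trivext}, $\p' \subseteq Q$, and $Q$ is chosen to avoid $1/X_n$). Appending $\ia$ to a saturated chain of length $n-2$ inside $\p'$ then gives $\hgt \ia \geq n-1$. The main technical hurdle is the verification $R_n \subseteq W$, which requires the coefficient-by-coefficient analysis of $\pi(1/f)$ described above; everything else is standard valuation-theoretic bookkeeping combined with Lemma~\ref{lem:htineq}.
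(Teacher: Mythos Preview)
Your proposal is correct and follows essentially the same route as the paper's proof: both set $\ia=\m_W\cap R_n$, use the composite nature of $W$ over $V(X_n)$ to place $R_n$ inside $W$, invoke Lemma~\ref{lem:htineq} to get $\hgt \p'=n-2$ for the center $\p'$ of $V(X_n)$, and then witness the strict chain $\p'\subsetneq\ia\subsetneq\m_{R_n}$ via $1/X_n$ and an $\alpha\in\m_{R_{n-1}}\setminus\p$. The only cosmetic difference is that the paper writes down the lexicographic value group $G\oplus\Z$ and computes $w(1/\phi)=(-v(f_k),k)\geq(0,0)$ explicitly, whereas you phrase the same computation as ``$\pi(1/f)$ lands in $\kappa_V[X_n^{-1}]_{(X_n^{-1})}$''; these are equivalent descriptions of the composite valuation.
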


\begin{proof}
We have that $W$ is a valuation ring with quotient field $K(X_1, \ldots, X_n)$ by \cite[Theorem 2.1(h)]{BaGil-overdiv}.  If $G$ is the value group of $V$, then $G \oplus \Z$, ordered lexicographically, is the value group of $W$
.  In particular, given $\phi = \sum_{j=0}^e f_j X_n^j \in K[X_1, \ldots, X_n]$, with each $f_j \in K[X_1, \ldots, X_{n-1}]$, the valuation $w$ is given by $w(\phi) = (v(f_k), -k)$, where $k$ is the largest index $i$ with $0\leq i \leq e$ such that $v(f_i) \leq v(f_j)$ for all $0\leq j \leq e$.  Then $w(1/\phi) = (-v(f_k), k) \geq (0,0)$ since $\frac 1{f_k} \in R_{n-1} \subseteq V$, whence $v(f_k) \leq 0$.

Set $\ia := \m_W \cap R_{n}$ and $\p' := \m_{V(X_n)} \cap R_n$.  By standard pullback results
, 
the maximal ideal of $V(X_n)$ is a nonmaximal prime of $W$; thus $\p' \subseteq \ia$.  On the other hand, $X_n^{-1} \in \ia \setminus \p'$, so that $\p' \subsetneq \ia$.  Since $\p$ is a nonmaximal ideal of $R_{n-1}$, there is some nonunit $\alpha$ of $R_{n-1}$ (hence also of $R_n$) that avoids $\p$.  We have $w(\alpha) = (v(\alpha), 0) = (0,0)$, so that $\alpha \notin \ia$.  Thus, $\p' \subsetneq \ia \subsetneq \m_{R_n}$, so that since $\hgt \p' = n-2$ by Lemma~\ref{lem:htineq}, we have $\hgt \ia = n-1$.

Now, $\p = \p' \cap R_{n-1} \subseteq \ia \cap R_{n-1}$.  Hence, $\hgt (\ia \cap R_{n-1}) \geq \hgt \p = n-2$.  But $\alpha \in \m_{R_{n-1}} \setminus \ia$, so $\hgt (\ia \cap R_{n-1}) = n-2$, whence $\ia \cap R_{n-1} = \p$.
\end{proof}

\begin{thm}\label{thm:infprimesallheights}
For every $1\leq i \leq n-1$, there exist infinitely many primes of $R_n$ of height $i$.
\end{thm}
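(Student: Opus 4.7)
The plan is to proceed by induction on $n$, with the inductive hypothesis being that for every field $L$ and every $2 \leq m < n$, the reciprocal complement $R(L[X_1,\ldots,X_m])$ has infinitely many primes of each height $1 \leq k \leq m-1$ (the proofs in the paper are uniform in the base field, so the hypothesis is compatible with this formulation). The base case $n=2$ is immediate: the only height to check is $i=1$, which is Proposition~\ref{pr:infprimes1}.

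For the inductive step I would split into two cases according to whether $i$ is maximal or not.

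\emph{Case 1: $1 \leq i \leq n-2$.} Apply Proposition~\ref{pr:loclower} with $j=i+1$ (note $2 \leq j \leq n-1$) to produce a prime $P \in \Spec R_n$ such that $R_{n,P}$ is isomorphic to the reciprocal complement of $L[X_1,\ldots,X_{i+1}]$ for some field $L$. By Theorem~\ref{thm:dim} the dimension of $R_{n,P}$ is $i+1$, so $\hgt P = i+1$. Since $i+1 < n$, the inductive hypothesis applies to $R_{n,P}$ and gives infinitely many primes of height $(i+1)-1 = i$. Under the canonical order-preserving, height-preserving bijection between $\Spec R_{n,P}$ and the primes of $R_n$ contained in $P$, these lift to infinitely many primes of $R_n$ of height $i$.

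\emph{Case 2: $i = n-1$.} The inductive hypothesis applied to $R_{n-1}$ yields infinitely many primes $\p$ of $R_{n-1}$ of height $n-2$. For each such $\p$, choose a valuation overring $V$ of $R_{n-1}$ centered on $\p$ (such $V$ exists by standard valuation theory, since any valuation ring dominating $(R_{n-1})_\p$ works). Then Lemma~\ref{lem:pullbackvaluedprimes} produces a prime $\ia$ of $R_n$ of height $n-1$ with $\ia \cap R_{n-1} = \p$. Distinct choices of $\p$ give distinct $\ia$, since $\p$ is recovered as $\ia \cap R_{n-1}$, so we obtain infinitely many primes of $R_n$ of height $n-1$.

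The conceptual obstacle is organizational rather than technical: Lemma~\ref{lem:htineq} only guarantees height preservation from $R_{n-1}$ to $R_n$ when $\hgt \p \in \{0, n-2, n-1\}$, so a naive pushforward from $R_{n-1}$ fails to cover intermediate heights $2 \leq i \leq n-3$. The key insight is that one should instead localize downward: by Proposition~\ref{pr:loclower}, every $R_n$ contains a prime whose localization is itself a reciprocal complement of a polynomial ring of strictly smaller dimension, and the inductive hypothesis supplies the needed primes there. The remaining boundary case $i = n-1$ is precisely the one that cannot be reached by localization (it would force $j=n$), and that is exactly where Lemma~\ref{lem:pullbackvaluedprimes} was designed to intervene.
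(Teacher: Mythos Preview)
Your proof is correct and follows essentially the same approach as the paper: induction on $n$ with base case Proposition~\ref{pr:infprimes1}, handling $i\le n-2$ by localizing $R_n$ at a prime whose localization is a lower-dimensional reciprocal complement and invoking the inductive hypothesis there, and handling $i=n-1$ via Lemma~\ref{lem:pullbackvaluedprimes} applied to the infinitely many height $n-2$ primes of $R_{n-1}$. The only cosmetic difference is that for $i\le n-2$ the paper uses a single localization at the prime $Q$ with $(R_n)_Q\cong R(K(X_n)[X_1,\ldots,X_{n-1}])$ (i.e.\ $j=n-1$ in Proposition~\ref{pr:loclower}) to cover all such $i$ at once, whereas you localize separately with $j=i+1$ for each $i$.
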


\begin{proof}
When $n=0,1$, the statement is vacuous.  Moreover, since when $n\geq 2$ we know that $R_n$ has infinitely many height one primes by Proposition~\ref{pr:infprimes1}, the result holds for $n=2$.  Thus, we assume inductively that $n>2$ and the result holds for smaller $n$.  Since $(R_n)_Q = R(K(X_n)[X_1, \ldots, X_{n-1}])$ (see Notation~\ref{nota:trivext}), it has infinitely many primes of height $i$ for $1\leq i \leq n-2$, which then restrict to distinct primes of these heights in $R_n$ via the localization map.  So we need only show that $R_n$ has infintely many primes of height $n-1$.

Let $\p$, $\q$ be distinct prime ideals of height $n-2$ in $R_{n-1}$.  By Lemma~\ref{lem:pullbackvaluedprimes}, there are valuation overrings $W_1$, $W_2$ of $R_n$ whose centers in $R_n$ are height $n-1$ primes $\p'$, $\q'$ such that $\p' \cap R_{n-1} = \p$ and $\q' \cap R_{n-1} = \q$.  Since $\p\neq \q$, it follows that $\p' \neq \q'$.  Since there are infinitely many primes of height $n-2$ in $R_{n-1}$ by inductive hypothesis, it thus follows that there are infinitely many primes of height $n-1$ in $R_n$.
\end{proof}

\section{The dimension 2 case}
In this section, we work in 2 variables, so that $D = K[X,Y]$ where  $X=X_1$, $Y=X_2$ for short, $R = R(K[X,Y])$, $F = K(X,Y)$, etc.  We have done likewise in many results earlier in the paper in service of extending the results to higher dimensions. However, for each of the results in this section, either we do not know how to extend it into higher dimension, or else we know it to be false in higher dimension.  In the dimension 2 case, we will show that $R$ has a non finitely generated integral overring, that localizing $R$ at height one primes always yields Noetherian domains, and that any finitely generated proper ideal lives in almost all height one primes.

We start by expanding Theorem~\ref{thm:notic} to show that the integral closure if $R^*$ is quite a bit larger than $R^*$ itself:

\begin{prop}\label{pr:infiniteoverring}
There is an overring $S$ of $R=R_2$, that is integral over $R$ but not finitely generated over it.
\end{prop}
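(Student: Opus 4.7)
The plan is to take $S := R[\sigma^{-1}(\beta_{p,q}) : (p,q) \text{ valid}]$, where $\beta_{p,q} = X^{2q-c}Y^d/(X^q - Y^p)$ is the explicit element from the proof of Theorem~\ref{thm:notic}, and a pair $(p,q)$ is \emph{valid} if $p, q$ are relatively prime with $1 < p < q$ and $pq$ coprime to $\chr K$. There are infinitely many valid pairs. Since $\beta_{p,q}^p \in R^*$ by the computation inside Theorem~\ref{thm:notic}, each adjoined generator of $S$ is integral over $R$, so $S$ is integral over $R$. The main content is showing $S$ is not finitely generated over $R$, or equivalently that $S^* := \sigma(S)$ is not finitely generated over $R^*$.

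Suppose it were. Since any $R^*$-algebra generating set of $S^*$ involves only finitely many of the $\beta_{p,q}$'s, we may assume $S^* = R^*[\beta_{p_1, q_1}, \ldots, \beta_{p_k, q_k}] =: T$ for some finite list of valid pairs. Pick any valid $(p,q)$ not in this list; I will derive a contradiction by showing $\beta_{p,q} \notin T$. The tool is the valuation $v := v_{p,q,pq+1}$ of Lemma~\ref{lem:overp}, for which $R^* \subseteq V$ and $v(\beta_{p,q}) = 1$ by the proof of Theorem~\ref{thm:notic}.

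First, a direct arithmetic computation gives $v(\beta_{r,s}) \geq p + q$ for every valid $(r,s) \neq (p,q)$: relative primeness of the distinct pairs forces $ps \neq qr$, so $v(X^s-Y^r) = \min\{ps, qr\}$, and substituting into $\beta_{r,s} = X^{2s-c'}Y^{d'}/(X^s-Y^r)$ together with $0 < c' < s$, $0 < d' < r$ yields the bound after splitting by whether $ps < qr$ or $ps > qr$. Next, I will establish that every element of $R^*$ has $v$-value in $\{0\} \cup [p, \infty]$: this is implicit in the proof of Theorem~\ref{thm:notic}, since any representation $\sum \sigma(1/f_i)$ of such an element may be split so that the value-$0$ summands assemble into an element of $K[\theta]$ with $\theta := \sigma(1/(X^q - Y^p))$, while the remaining summands have $v$-value $\geq p$. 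Transcendence of $\bar\theta$ over $K$ in the residue field $\kappa_V$ (verifiable from the explicit monomial valuation description of $v$ on $L = K'(s,u)$) prevents any nonzero $K$-polynomial in $\theta$ from lying in $\m_V$, so no cancellation can push $v$-values into the gap $\{1, \ldots, p-1\}$.

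Finally, expand an arbitrary $\alpha \in T$ as a finite sum $\sum_{\mathbf e} a_{\mathbf e} \prod_i \beta_{p_i, q_i}^{e_i}$ with $a_{\mathbf e} \in R^*$. Every nonconstant monomial in the $\beta_{p_i, q_i}$'s contributes $v$-value $\geq p+q$, while each coefficient contributes $v$-value in $\{0\} \cup [p, \infty]$; all $v$-level-$0$ contributions to $\alpha$ therefore come from the level-$0$ part of $a_{\mathbf 0}$ alone, which still lies in $K[\theta]$. The same transcendence-based no-cancellation argument in this larger setting then yields $v(\alpha) \in \{0\} \cup [p, \infty]$, so $v(\alpha) \neq 1 = v(\beta_{p,q})$, giving the desired contradiction. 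I expect the main obstacle to be writing the no-cancellation step cleanly in the $T$ setting without re-deriving the full structural content of Theorem~\ref{thm:notic}; the $v(\beta_{r,s})$ computation and the monomial bookkeeping in $T$ are routine.
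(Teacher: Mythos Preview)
Your proposal is correct and follows essentially the same route as the paper: adjoin all the $\beta_{p,q}$'s to $R^*$, assume a finite subset suffices, then use the valuation $v_{p,q,pq+1}$ from Lemma~\ref{lem:overp} together with the value gap $v(R^*) \subseteq \{0\} \cup [p,\infty)$ to exclude a leftover $\beta$. The only differences are that the paper picks the new pair to be $(r,r+1)$ with $r>\max_i p_i$ (so the bound $v(\beta_{p_i,q_i})\geq 2r+1$ needs no case split), and you are more explicit than the paper about why the transcendence of $\bar\theta$ over $K$ is needed to rule out cancellation among the value-$0$ terms.
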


\begin{proof}
As usual, we will work with $R^*$ instead of $R$.

Let $\Sigma := \{\beta_{p,q}\}$ as in the proof of Theorem~\ref{thm:notic}, where the pairs $(p,q)$ range over all relatively prime pairs of integers $1<p<q$ such that neither $p$ nor $q$ is a multiple of $\chr K$.  Let $S := R^*[\Sigma]$.  Then as seen in the proof of Theorem~\ref{thm:notic}, each $\beta_{p,q}$ is in the integral closure of $R^*$. Hence, $S$ is integral over $R^*$.  

Suppose $S$ is finitely generated as an $R^*$-algebra.  Then there is a finite list of such pairs $\{(p_i,q_i)\}_{1 \leq i \leq s}$ such that $S = R^*[\beta_{p_1,q_1}, \ldots, \beta_{p_s,q_s}]$.  Choose $r>\max \{p_i \mid 1\leq i \leq s\}$ such that $r>1$.  Then $(r,r+1)$ is such a pair, so $\beta_{r,r+1} \in S$.  Let $v = v_{r,r+1, r^2 + r+1}$.

Then for any relatively prime pair $(p,q)$ with $p<r$, we claim that $v(X^q-Y^p) = \min\{v(X^q), v(Y^p)\}$.  If this were not the case, we would have $rq=v(X^q) = v(Y^p)=(r+1)p$, so that $(q-p)r = p$, contradicting the facts that $q-p\geq 1$ and $r>p$.  Thus, $v(X^q-Y^p) = \min\{rq, (r+1)p\}$.

Now choose any $(p,q) = (p_i,q_i)$ with $1 \leq i \leq s$.  Let $(c,d)$ be the unique pair of integers with $0<c<q$, $0<d<p$, and $qd=pc+1$.  Then \begin{align*}
v(\beta_{p,q}) &= v\left(\frac{X^{2q-c}Y^d}{X^q - Y^p}\right) = (2q-c)r+ d(r+1) - \min\{rq, (r+1)p\}\\
&\geq (2q-c)r+d(r+1)-rq = (q-c)r+(r+1)d \geq 2r+1.
\end{align*}
On the other hand, by the proof of Theorem~\ref{thm:notic}, we have $v(\beta_{r,r+1}) = 1$.

Let $Z_1, \ldots, Z_s$ be algebraically independent indeterminates over $R^*$. It follows that if $g \in R^*[Z_1, \ldots, Z_s]$ 
such that $\beta_{r,r+1} = g(\beta_{p_1,q_1}, \ldots, \beta_{p_s,q_s})$, then $g$ has a nonzero constant term $c$, and $1=v(\beta_{r,r+1}) =v(c)$.  But this contradicts the fact (see Lemma~\ref{lem:overp}) that every element of $R^*$ has value either $0$ or $\geq r$ under $v$.  Thus, $S$ is not finitely generated as an $R^*$-algebra.
\end{proof}

Our main result for dimension 2 shows that the localizations at height one primes are surprisingly well-behaved.  First, though, we need the following lemma.

\begin{lemma}
\label{lem:Xinallprimes2}
Let $\p \in \Spec R_n$ and $1 \leq i \leq n$ such that $\p \nsubseteq \q_i$, where $\q_i$ is as in Lemma~\ref{lem:Xiprimes}.  Then $1/X_i\in \p$.

The $\q_i$ are mutually incomparable, and for each $i$, $1/X_j \in \q_i$ for each $j \neq i$.

If $n =2$, then any nonzero prime distinct from $\q_1$ and $\q_2$ contains $(1/X_1, 1/X_2)R$. Hence with the notation of Lemma~\ref{Xiprimes}, $\p_1=\q_2$ and $\p_2=\q_1$.
\end{lemma}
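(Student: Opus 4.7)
My plan is to prove the four assertions in order, each being a short consequence of the earlier results. For the opening implication, I would argue contrapositively: by Lemma~\ref{lem:Xiprimes}, $\q_i$ is the unique maximal element among primes of $R$ not containing $1/X_i$, so any prime $\p$ with $1/X_i\notin\p$ automatically satisfies $\p\subseteq\q_i$. For the second assertion (that $1/X_j\in\q_i$ for $j\neq i$), I would use Lemma~\ref{lem:localizingatonevariable} to identify $R_{\q_i}$ with $R(K(X_i)[X_1,\dotsc,\widehat{X_i},\dotsc,X_n])$; since $X_j$ is a nonconstant polynomial over the field $K(X_i)$, Theorem~\ref{thm:islocalpoly} places $1/X_j$ in the maximal ideal of this reciprocal complement, which contracts to $\q_i$. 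The incomparability of the $\q_i$ is then immediate: for $i\neq j$ one has $1/X_j\in\q_i\setminus\q_j$, so neither containment is possible.

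For the third assertion (with $n=2$), the crucial observation is that $R_{\q_1}=R(K(X_1)[X_2])$ is the reciprocal complement of a one-variable polynomial ring, hence a DVR by the result of \cite{nme-ratfuns} recalled in the introduction. In particular $\hgt\q_1=1$, and any nonzero prime of $R$ contained in $\q_1$ must equal $\q_1$. If $\p$ is a nonzero prime with $\p\neq\q_1,\q_2$ and $1/X_1\notin\p$, then the first assertion gives $\p\subseteq\q_1$, and height-one forces $\p=\q_1$, a contradiction. Hence $1/X_1\in\p$, and symmetrically $1/X_2\in\p$, so $(1/X_1,1/X_2)R\subseteq\p$.

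For the final identification, I would transport through the isomorphism $\sigma:R\to R^*$ implicit in the statement. Since $R^*_{\p_1}=D_{(X_1)}$ by Lemma~\ref{Xiprimes}, $X_2\notin\p_1$, so $1/X_2\notin\sigma^{-1}(\p_1)$; the first assertion then gives $\sigma^{-1}(\p_1)\subseteq\q_2$. Both $\p_1$ and $\q_2$ are height-one primes (the former by Lemma~\ref{Xiprimes}, the latter by the DVR argument above), so this containment is an equality, yielding $\sigma^{-1}(\p_1)=\q_2$; symmetrically, $\sigma^{-1}(\p_2)=\q_1$. The main obstacle here is merely the bookkeeping of keeping $R$ and $R^*$ carefully distinguished under $\sigma$; once that is handled, each step follows directly from the earlier lemmas.
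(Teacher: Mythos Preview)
Your argument is correct, and for the first and last assertions it matches the paper's reasoning closely. The genuine difference lies in how you handle the middle two claims.

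For the second assertion, the paper invokes Theorem~\ref{thm:dim} (together with Lemmas~\ref{lem:localizingatonevariable} and~\ref{lem:Xiprimes}) to compute $\hgt \q_i = n-1$ for all $i$, deduces incomparability from equality of heights, and \emph{then} applies the first assertion to $\q_i \nsubseteq \q_j$ to obtain $1/X_j \in \q_i$. You reverse the order: you get $1/X_j \in \q_i$ directly by observing that $X_j$ is a nonconstant polynomial in $K(X_i)[X_1,\dotsc,\widehat{X_i},\dotsc,X_n]$, so $1/X_j$ lies in the maximal ideal of $R_{\q_i}$ by Theorem~\ref{thm:islocalpoly}; incomparability is then immediate from $1/X_j \in \q_i \setminus \q_j$. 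Your route is more elementary in that it does not require the dimension computation of Theorem~\ref{thm:dim}. Similarly, for the $n=2$ case the paper uses $\dim R_2 = 2$ to conclude that every nonzero nonmaximal prime has height one and hence cannot sit inside $\q_i$, whereas you use only that $R_{\q_i}$ is a DVR (the one-variable case from \cite{nme-ratfuns}) to force any nonzero $\p \subseteq \q_i$ to equal $\q_i$. The paper's approach packages the height information in one stroke via Theorem~\ref{thm:dim}; yours trades that for a slightly longer but more self-contained chain of implications. Your handling of the $\sigma$-bookkeeping in the final identification is also more explicit than the paper's, which tacitly conflates $\p_i \subset R^*$ with its $\sigma$-preimage in $R$.
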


\begin{proof}
By Lemma~\ref{lem:Xiprimes}, any prime ideal avoiding $X_i^{-1}$ must be contained in $\q_i$. Thus $1/X_i \in \p$.

Now let $i,j$ be distinct integers between $1$ and $n$.  Combining Lemmas~\ref{lem:localizingatonevariable} and \ref{lem:Xiprimes}, and Theorem~\ref{thm:dim} yields $\hgt \q_i = \hgt \q_j = n-1$.  Thus, $\q_i$, $\q_j$ must be incomparable.  Since $\q_i \nsubseteq \q_j$, we have $X_j \in \q_i$ by the first paragraph.

In the $n=2$ case, by Theorem~\ref{thm:dim} we have that any nonmaximal nonzero prime has height 1.  In particular, if $\p$ is distinct from $\q_1$ and $\q_2$, then since $\hgt \p = 1 = \hgt \q_1 = \hgt \q_2$, we have $\p \nsubseteq \q_i$ for $i=1,2$.  Since $1/X_i \in \p_i \setminus \q_i$ for $i=1,2$, the final claim follows.
\end{proof}

\begin{thm}
\label{localizations}
Let $\p$ be a height one prime ideal of $R=R_2$
. Then $R_\p$ is a Noetherian one-dimensional local domain.
\end{thm}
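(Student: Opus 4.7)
The plan splits into two cases based on whether $\p$ is one of the distinguished height-one primes $\q_1, \q_2$ of Lemma~\ref{lem:Xiprimes}.

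If $\p = \q_i$, then Lemma~\ref{lem:Xinallprimes2} identifies $\p$ with $\sigma^{-1}(\p_{3-i})$, where $\p_{3-i}$ is as in Lemma~\ref{Xiprimes}, and that lemma computes $R^*_{\p_{3-i}} = D_{(X_{3-i})}$, a DVR. Transporting along the ring isomorphism $\sigma\colon R \to R^*$, $R_\p$ is a DVR, which is Noetherian, local, and one-dimensional.

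In the remaining case, Lemma~\ref{lem:Xinallprimes2} places both $1/X$ and $1/Y$ in $\p$, so the prime $\p^* := \sigma(\p) \subseteq R^*$ contains $X$ and $Y$. Since $(X,Y)$ is maximal in $D$, we have $\p^* \cap D = (X,Y)$, and thus $D_{(X,Y)} \subseteq R^*_{\p^*} \subseteq F$, a one-dimensional local overring of the two-dimensional regular local ring $D_{(X,Y)}$.

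The plan for showing Noetherianness of $R^*_{\p^*}$ is to exhibit a DVR $V$ of $F$ with $R^*_{\p^*} \subseteq V$ such that $V$ is module-finite over $R^*_{\p^*}$; then Eakin--Nagata concludes. The existence of a valuation overring $V$ of $R^*$ centered on $\p^*$ is guaranteed by Chevalley's extension theorem, and its rank and value group are forced to be rank one and discrete by Abhyankar's inequality applied to $D_{(X,Y)} \subseteq V \subseteq F$ together with the fact that all relevant residue field extensions remain algebraic over $K$. For primes of the form $\p^* = \p^*_{p,q}$ from Proposition~\ref{pr:infprimes1}, one takes explicitly $V = V_{p,q,pq+1}$ from Notation~\ref{nota:val}, which by Lemma~\ref{lem:overp} contains $R^*$ and centers on $\p^*_{p,q}$.

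The main obstacle is proving finiteness of $V$ over $R^*_{\p^*}$. A natural attack writes $V = R^*_{\p^*}[\beta]$ for an element $\beta$ integral over $R^*$, such as $\beta_{p,q}$ from the proof of Theorem~\ref{thm:notic}, and uses the explicit degree-$p$ relation $\beta^p \in R^*$ to control module generators. Alternatively, one could aim for the stronger conclusion $R^*_{\p^*} = V$ (so $R^*_{\p^*}$ is itself the DVR) by showing the reverse inclusion: for $\alpha \in V$, write $\alpha = g/h$ with $g, h \in D$, then apply Lemma~\ref{lem:star} and Lemma~\ref{nonzeroconstantterm} iteratively to peel off $X$- and $Y$-free unit factors and rewrite $\alpha$ as a quotient $a/b$ with $a \in R^*$ and $b \in R^* \setminus \p^*$.
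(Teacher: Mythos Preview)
Your case split and the handling of $\p=\q_i$ are fine, and you correctly reach the inclusion $D_{(X,Y)}\subseteq R^*_{\p^*}$ in the remaining case. The genuine gap is exactly the one you flag: you never prove that the valuation ring $V$ is module-finite over $R^*_{\p^*}$, so Eakin--Nagata cannot be invoked. Your suggested fix via $\beta_{p,q}$ only addresses the primes $\p^*_{p,q}$, and even there you have not shown $V=R^*_{\p^*}[\beta_{p,q}]$; your alternative hope that $R^*_{\p^*}=V$ outright is not true in general (see Remark~\ref{rem:localizationDVR}, which shows this happens only under an extra hypothesis on $\p$). Moreover, your appeal to Abhyankar to force $V$ to be a DVR is incomplete: a valuation of $F$ centered on $(X,Y)$ in $D_{(X,Y)}$ can have rank two or rank one with non-discrete value group, and you have not argued why dominance of the one-dimensional ring $R^*_{\p^*}$ rules these out.

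The paper's proof avoids all of this by working \emph{from below} rather than from above. Since $\p^*$ is not the maximal ideal of $R^*$, there is an irreducible $f\in (X,Y)D$, not associate to $X$ or $Y$, with $\alpha:=\sigma(1/f^*)\notin\p^*$. Neither $\alpha$ nor $\alpha^{-1}$ lies in $S:=D_{(X,Y)}$, so by Seidenberg \cite[Theorem~7]{Sei-notedim} the ring $S[\alpha]$ is two-dimensional Noetherian with $(X,Y)S[\alpha]$ a height-one prime $\q$; hence $S[\alpha]_\q$ is a one-dimensional Noetherian local domain. One then checks $\p^*\cap S[\alpha]=\q$ (using that $h(\alpha)$ is a unit in $R^*$ whenever $h\in K[T]$ has nonzero constant term, and $\alpha\notin\p^*$ handles $h=T$), so $S[\alpha]_\q\subseteq R^*_{\p^*}\subseteq F$. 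Now Krull--Akizuki applies directly and gives Noetherianness of $R^*_{\p^*}$, with no finiteness hypothesis needed on the overring side. The key idea you are missing is to exploit an element of $\m^*\setminus\p^*$ to drop the dimension of the Noetherian subring before comparing.
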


\begin{proof}
By Lemma~\ref{Xiprimes}, we may assume $\p \neq \p_1, \p_2$.  We work in $R^*$ and use the notation $D=K[X,Y]$, $R^* = \sigma(D)$.  Since $\p$ is not maximal there must exist $f \in (X,Y)D$ irreducible and not associate to either  $X$ or $Y$ in $D$, such that $\alpha:= \sigma(1/f^*) \not \in \p$.
As neither $\alpha$ nor $\alpha^{-1}$ are in $S= D_{(X,Y)}$ it follows from \cite[Theorem 7]{Sei-notedim} that $S[\alpha]$ is a two dimensional Noetherian ring having a height one prime ideal $\q$ generated by $X,Y$. Hence $S[\alpha]_{\q}= D[\alpha]_{(X,Y)}$ is a one dimensional local Noetherian domain.

Moreover, since $S \subseteq R^*$, we get $S[\alpha] \subseteq R^*$. 
 Let us show that $\p \cap S[\alpha] = \q$. By Lemma~\ref{lem:Xinallprimes2}, $X, Y \in \p$, so that $\q \subseteq \p \cap S[\alpha]$.  For the reverse containment, it will be enough to show that $\p \cap S[\alpha]$ is a height one prime, for which it will suffice to show it is not a maximal ideal.

Every maximal ideal of $S[\alpha]$ that contains $\q$ is of the form $(\q, h(\alpha))$ where $h \in K[T]$ is an irreducible monic polynomial. This is because by \cite[Theorem 7]{Sei-notedim}, we have $S[\alpha]/\q \cong K[T]$, where $T$ is an indeterminate over $K$ and $\bar \alpha \mapsto T$ in the isomorphism. 
We know that $\alpha \not \in \p$. If $h(T) \neq T$, then it has a nonzero constant term, so by Theorem~\ref{thm:islocalpoly}, $h(\alpha)$ is a unit in $R^*$, whence $h(\alpha) \notin \p$. On the other hand, $\alpha \notin \p$ by assumption. 

Thus  $\p \cap S[\alpha] = \q$, so that $S[\alpha]_{\q} \subseteq R^*_{\p}$. By the Krull-Akizuki Theorem \cite[Theorem 11.7]{Mats}, $R^*_{\p}$ is Noetherian, finishing the proof.
\end{proof}

\begin{rem}\label{rem:localizationDVR}
Suppose, in the setting of Theorem~\ref{localizations}, that there exists $f \in (X,Y)D \setminus (X,Y)^2D$ such that $1/f^* \not \in \p$. Then $R_{\p}$ is not merely one-dimensional and Noetherian, but a DVR.

To see this, and continuing the notation in the proof above, let $Z$ be an indeterminate over $D$ and let $\pi: K[X,Y,Z] \to D[\alpha]$ be the unique $K$-algebra homomorphism that fixes $D$ and sends $Z \mapsto \alpha$. Note that $\pi$ is surjective. Since $D[\alpha]$ has dimension 2, the kernel of $\pi$ is a height one prime of $K[X,Y,Z]$. It is clear that the polynomial $h= fZ-X^{a_1(f)}Y^{a_2(f)}$ is irreducible in $K[X,Y,Z]$ and contained in the kernel of $\pi$. It follows that
$$ D[\alpha] \cong \frac{K[X,Y,Z]}{(h)}.  $$ The ring $S[\alpha]_{\q} \cong \left(\frac{K[X,Y,Z]}{(h)}\right)_{(X,Y)}$ is a DVR if and only if $h$ is a regular parameter in $K[X,Y,Z]_{(X,Y)}$. This happens if and only if $f \in (X,Y)D \setminus (X,Y)^2D$.
In case $S[\alpha]_{\q}$ is a DVR, we clearly have $S[\alpha]_{\q} =  R^*_{\p}$ since a DVR has no proper  overring other than its fraction field.
\end{rem}

Next we show a ``cofinite character'' result that is somewhat dual to the finite character property of Krull domains.

\begin{thm}
 \label{allbutfinitelymnyprimes}
 Every finitely generated proper ideal of $R=R_2$ is contained in all but finitely many prime ideals.
 \end{thm}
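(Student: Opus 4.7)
The strategy is to reduce to a single-element statement and then to split by the form of that element. Since a prime $\p$ of $R^* = \sigma(R)$ fails to contain $I = (\alpha_1, \dots, \alpha_t)$ precisely when some $\alpha_i \notin \p$, the set $\{\p : I \not\subseteq \p\}$ equals the finite union $\bigcup_i \{\p : \alpha_i \notin \p\}$. The zero prime contributes at most one prime and the maximal ideal $\m_{R^*}$ automatically contains every proper ideal, so it suffices to prove that for every nonzero non-unit $\alpha \in R^*$, only finitely many height-one primes of $R^*$ avoid $\alpha$. I will work throughout in $R^* = \sigma(R) \supseteq D$.

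Fix such an $\alpha$ and write it in lowest terms as $\alpha = g/h$ with $g, h \in D = K[X,Y]$. Because $\alpha$ is a non-unit and $D_{(X,Y)} \subseteq R^*$ by Lemma~\ref{nonzeroconstantterm}, we must have $g(0,0) = 0$: otherwise $g$ would be a unit of $D_{(X,Y)} \subseteq R^*$, forcing $\alpha^{-1} = h g^{-1} \in R^*$. Thus $g \in (X,Y)D$. By Lemma~\ref{lem:Xinallprimes2}, every height-one prime $\p$ of $R^*$ other than $\p_1, \p_2$ contains both $X$ and $Y$, so $\p \cap D = (X,Y)D$. Theorem~\ref{localizations} together with the Krull--Akizuki theorem produces a DVR $V \subseteq F$ dominating $R^*_\p$ with $V \supseteq R^*$; its valuation $v$ satisfies $v(X), v(Y) > 0$ and $v(\alpha) \geq 0$, and the condition $\alpha \notin \p$ becomes $v(g) = v(h)$.

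If $h(0,0) \neq 0$, then $h$ is a unit of $D_{(X,Y)} \subseteq R^*$, so $v(h) = 0$; since every monomial of $g \in (X,Y)D$ has positive $v$-value, $v(g) > 0$, hence $\alpha \in \p$, and the bad set is contained in $\{\p_1, \p_2\}$. The substantive case is $h(0,0) = 0$, where both $g$ and $h$ lie in $(X,Y)D$ and we must bound the number of DVR overrings $V \supseteq R^*$ dominating $D_{(X,Y)}$ with $v(g) = v(h)$. Here the plan combines three ingredients: (i) the explicit family $v_{p,q,pq+1}$ from Lemma~\ref{lem:overp} handles the monomial-type DVRs, and a Newton-polygon argument on the supports of $g$ and $h$ shows $v_{p,q,pq+1}(g) > v_{p,q,pq+1}(h)$ for all but finitely many coprime pairs $(p,q)$; (ii) the constraint $v(\sigma(1/f)) \geq 0$ for every $f \in D \setminus \bigcup_i X_iD$, which by Lemma~\ref{lem:star} translates to $v(f^*) \leq \deg_X(f) v(X) + \deg_Y(f) v(Y)$, forces admissible non-monomial DVRs $V \supseteq R^*$ to have uniformly bounded ``specialization depth'' along each blowup direction of $\Spec D_{(X,Y)}$; (iii) on each of these finitely many admissible blowup directions at each bounded level, the equality $v(g) = v(h)$ imposes a nontrivial algebraic condition on the initial forms of $g$ and $h$, satisfied by only finitely many valuations. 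Grouping by the height-one prime that each DVR dominates (each such prime being dominated by only finitely many DVRs, since the normalization of $R^*_\p$ is semi-local Dedekind) produces the required finiteness.

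The main obstacle is the rigorous control of non-monomial DVRs in step (ii): one must verify that the inequality $v(f^*) \leq \deg_X(f) v(X) + \deg_Y(f) v(Y)$, applied to well-chosen linear forms $f = aX + bY$, forces any specialization depth along a given direction on the exceptional divisor of the blowup of the origin to be bounded by a constant depending only on the direction. Once this cap is in place, steps (i) and (iii) combine via elementary Newton-polygon combinatorics to yield the finiteness of the bad set.
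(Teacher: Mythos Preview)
Your reduction to a single nonunit $\alpha$ is correct, and the easy case $h(0,0)\neq 0$ is handled cleanly.  The trouble is entirely in the hard case: steps (ii) and (iii) are an outline, not an argument.  You have not shown that the inequality $v(f^*)\leq \deg_X(f)\,v(X)+\deg_Y(f)\,v(Y)$ bounds the ``specialization depth'' uniformly, nor that the resulting bounded set of admissible directions on each blowup level is finite, nor that the initial-form condition in (iii) cuts this down to finitely many valuations.  Each of these is a genuine statement that requires proof, and together they amount to a classification of DVR overrings of $R^*$ centered on height-one primes---a problem at least as hard as the theorem itself.  As written, the hard case is a plausible strategy with no verified steps.

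The paper avoids all of this by a sharper reduction you did not make.  Rather than stopping at a general nonunit $\alpha=g/h$, one writes any nonunit as a sum of products of terms $\sigma(1/f)$ with $f$ \emph{irreducible} (using Theorem~\ref{thm:islocalpoly} and factoring each $f_i$), so it suffices to treat $\alpha=\sigma(1/f)$ with $f$ irreducible and not associate to $X$ or $Y$.  For such $\alpha$, the proof of Theorem~\ref{localizations} already produces a \emph{fixed} one-dimensional Noetherian local domain $D[\alpha]_{(X,Y)}$ sitting inside $R^*_\p$ for \emph{every} height-one prime $\p$ with $\alpha\notin\p$.  Since a one-dimensional Noetherian local domain has only finitely many valuation overrings, the paper then shows (by a short locality argument on $R^*_\p\cap R^*_\q$) that distinct bad primes $\p,\q$ cannot have $R^*_\p$ and $R^*_\q$ under a common valuation overring, and finiteness follows immediately.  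The point is that the extra reduction to irreducible $f$ converts your open-ended classification of DVR overrings of $R^*$ into a finiteness statement about a single explicit Noetherian ring; this is the idea your plan is missing.
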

 
 \begin{proof}
Every nonunit element $\phi \in R^*$ can be written as a finite sum $\phi = \phi_1 + \ldots + \phi_t$ such that any $\phi_j$ is a finite product of elements of the form $\sigma(\frac{1}{f})$ with $f$ irreducible in $K[X,Y]$. Thus it is sufficient to prove that any $\sigma(\frac{1}{f})$ with $f$ irreducible is contained in all but finitely many primes of $R^*$. Since we already know this fact for $X$ and $Y$ by Lemma~\ref{lem:Xinallprimes}, we can assume $f$ is not an associate of $X$ nor $Y$. Set $\alpha= \sigma(\frac{1}{f})$. We know that $\alpha$ is in the maximal ideal of $R^*$. By the proof of Theorem \ref{localizations}, we get that if a prime $\p$ of $R^*$ does not contain $\alpha$, then $R^*_{\p}$ contains the one dimensional Noetherian local domain $D[\alpha]_{(X,Y)}$. Suppose there exist two distinct nonzero prime ideals $\p$, $\q$ of $R^*$ with $\alpha \notin \p \cup \q$. Necessarily $\p$ and $\q$ have height one. We show that $R^*_{\p}$ and $R^*_{\q}$ cannot be contained in a common valuation overring. Suppose by way of contradiction that $R^*_{\p} \cup R^*_{\q} \subseteq V$ for some valuation ring $V$ contained in $K(X,Y)$. Then $V$ is an overring of $D[\alpha]_{(X,Y)}$, hence a DVR. Since $R^*_{\p}$ and $R^*_{\q}$ are one dimensional, the maximal ideal $\m_V$ of $V$ contains both the maximal ideals of $R^*_{\p}$ and $R^*_{\q}$. Therefore the intersection $ R^*_{\p} \cap R^*_{\q} $ is local. For this observe that given two nonunits $\beta, \theta \in R^*_{\p} \cap R^*_{\q}$, we have $\beta, \theta \in \p R^*_{\p} \cup \q R^*_{\q} \subseteq \m_V$ and hence their sum $\beta + \theta \in \m_V \cap R^*_{\p} \cap R^*_{\q} \subseteq \p R^*_{\p} \cap \q R^*_{\q}$ is a nonunit in $ R^*_{\p} \cap R^*_{\q} $. Moreover, we have $R^* \subseteq  R^*_{\p} \cap R^*_{\q} \subseteq R^*_{\p}.$ Since $\alpha^{-1} \in (R^*_{\p} \cap R^*_{\q}) \setminus R^*$, the maximal ideal of $R^*_{\p} \cap R^*_{\q}$ must contract to a nonzero prime ideal of $R^*$ not containing $\alpha$. But any such ideal has height one, so that $R^*_{\p} \cap R^*_{\q}$ contains the localization of $R^*$ at some height one prime. This is a contradiction since 
 two localizations at distinct height one primes cannot be comparable with respect to inclusion.
 Hence, $R^*_{\p}$ and $R^*_{\q}$ cannot have a common valuation overring.
 
 Now, suppose there are infinitely many distinct primes of $R^*$ not containing $\alpha$. Then the above implies that $D[\alpha]_{(X,Y)}$ has infinitely many valuation overrings, but it is clearly a contradiction since $D[\alpha]_{(X,Y)}$ is local, Noetherian and one dimensional.   
 \end{proof}

Recall that in a domain $S$, with fraction field $F$, for any $S$-submodule $I$ of $F$ we can write $I^{-1} := \{x \in F \mid xI \subseteq S\}$.  Then for any ideal $I$ of $S$, we set $I_v := (I^{-1})^{-1}$ and $I_t := \bigcup \{J_v \mid J \subseteq I$ and $J$ is finitely generated$\}.$  A \emph{t-ideal} is then an ideal $I$ such that $I=I_t$. If there is a unique maximal element among the proper t-ideals of $S$, we say $S$ is \emph{t-local}. See \cite{FoZa-tlocal}.

\begin{cor}\label{cor:tlocal}
The ring $R=R_2$ is $t$-local. 
\end{cor}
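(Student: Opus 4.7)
The plan is to show that the maximal ideal $\m$ of $R$ is itself a $t$-ideal. Since $R$ is local by Theorem~\ref{thm:islocalpoly}, every proper ideal of $R$ lies in $\m$, so once $\m_t = \m$ is established, $\m$ is automatically the unique maximal proper $t$-ideal and $R$ is $t$-local. Because $\m \subseteq \m_t \subseteq R$ and $\m$ is maximal, the task reduces to showing $\m_t \neq R$, which by definition of $\m_t$ amounts to proving $J_v \neq R$ for every finitely generated $J \subseteq \m$. Using the elementary equivalence $J_v = R \Longleftrightarrow J^{-1} = R$ ($\Leftarrow$ is immediate from $R^{-1}=R$; $\Rightarrow$ follows since $1 \in J_v$ forces $J^{-1} \subseteq R$, while $R \subseteq J^{-1}$ is automatic), I further reduce to exhibiting, for each such $J$, a single element of $J^{-1}\setminus R$.

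The idea is to produce such an element by localizing at a height-one prime. Fix a finitely generated proper $J \subseteq \m$. Theorem~\ref{allbutfinitelymnyprimes} supplies a height-one prime $\p \in \Spec R$ containing $J$, and Theorem~\ref{localizations} says that $R_\p$ is a one-dimensional Noetherian local domain; write $\n := \p R_\p$. A standard argument --- pick $0 \neq x \in \n$, use Artinianness of $R_\p/(x)$ to choose the least $k \geq 1$ with $\n^k \subseteq (x)$, then any $y \in \n^{k-1}\setminus(x)$ yields $y/x \in \n^{-1}\setminus R_\p$ --- shows $\n^{-1} \supsetneq R_\p$. Since $JR_\p \subseteq \n$, this gives $(JR_\p)^{-1} \supseteq \n^{-1} \supsetneq R_\p$.

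To return from $R_\p$ to $R$, I would invoke the well-known commutativity of $\Hom_R(-,R)$ with localization on finitely generated modules: because $J$ is finitely generated, $(J^{-1})_\p = (JR_\p)^{-1}$ as subsets of $F$. Combined with the previous step, $(J^{-1})_\p \supsetneq R_\p$, which forces $J^{-1} \supsetneq R$, completing the chain of implications. I do not anticipate a serious obstacle: the hard work has already been packaged into Theorems~\ref{allbutfinitelymnyprimes} and~\ref{localizations}, and the two auxiliary ingredients --- the $J_v = R \Leftrightarrow J^{-1} = R$ equivalence and the formula $(J^{-1})_\p = (JR_\p)^{-1}$ for finitely generated $J$ --- are routine manipulations of fractional ideals valid in any integral domain.
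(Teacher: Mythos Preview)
Your argument is correct. Both you and the paper reduce to showing $J_v \neq R$ for every finitely generated $J \subseteq \m$, and both invoke Theorem~\ref{allbutfinitelymnyprimes} (together with the infinitude of height-one primes) to place $J$ inside some height-one prime $\p$. From there the routes diverge. The paper simply cites the standard fact (from \cite{El-clbook}) that every height-one prime in any domain is a $t$-ideal, so $J_v \subseteq \p_t = \p \subsetneq R$ and the proof is finished in one line. You instead bring in the additional Theorem~\ref{localizations} to know $R_\p$ is one-dimensional Noetherian, then run the classical ``$\n^{-1} \supsetneq R_\p$'' argument and pull it back via the localization identity $(J^{-1})_\p = (J R_\p)^{-1}$ for finitely generated $J$. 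Your path is longer and consumes a second major theorem of the paper, but it has the virtue of being self-contained: you never appeal to the external result on $t$-ideals, and the Noetherian hypothesis is genuinely doing work (without it $\n^{-1} = R_\p$ can occur, e.g.\ in a rank-one non-discrete valuation ring). The paper's route is more economical and shows that Theorem~\ref{localizations} is not actually needed for this corollary.
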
 
 
\begin{proof}
In fact the unique maximal ideal $\m$ of $R$ is a $t$-ideal.  To see this, let $I$ be a finitely generated proper ideal of $R$.  Then by Theorem~\ref{allbutfinitelymnyprimes}, $I$ is contained in some height one prime ideal $\p$.  But $\p$ is a $t$-ideal by 
\cite[top of p. 23]{El-clbook}. Hence, $I_t \subseteq \p_t = \p \subset \m$.  Thus, $\m_t = \m$.
\end{proof}

\section{Questions}
The study of \rcos\ is an entirely new field of inquiry.  There are many interesting questions one could ask about this particular $R$, or about \rcos\ in general.  The following are just some questions that occurred to these authors, but such questions are easy to generate.  As seen below, some of these questions have had progress on them since they were first proposed in an earlier draft of this paper.
\begin{question}
What can be said about the integral closure of $R$, where $R$ is the reciprocal complement of a polynomial ring in 2 or more variables over a field?  In particular: \begin{enumerate}[(a)]
 \item Is the integral closure of $R$ infinitely generated over $R$?

 We can't conclude this from Proposition~\ref{pr:infiniteoverring} since $R$ is not a Noetherian ring, so finitely generated $R$-modules can have infinitely generated submodules.
 \item Is the integral closure of $R$ local? If not, is it at least semilocal?
 \item Is the integral closure of $R$ completely integrally closed?
 \end{enumerate}
\end{question}

\begin{question}
Let $R$ be the reciprocal complement of a polynomial ring in finitely many variables over a field. Is  $R$ a \emph{strong B\'ezout intersection domain} (SBID)  (see \cite{GuLo-Bezout})?  That is, is it true that every finite intersection of non-comparable principal ideals fails to be finitely generated?
\end{question}

Some positive evidence is given by Theorem~\ref{thm:notfc}, and also by Corollary~\ref{cor:tlocal}, since any SBID is t-local.

\begin{question}
    Let $D$ be a Noetherian domain.  Is $R(D)$ a G-domain?
\end{question}
For some evidence of this, see Proposition~\ref{pr:Gdom}.  More generally, by \cite[Theorem 2.12]{Gu-recomp}, the above holds whenever $\dim R(D) <\infty$.

\begin{question}
    Let $D$ be an integral domain of dimension $\geq 2$.  Assume that any nonzero Egyptian element of $D$ is a unit.  Must $R(D)$ be non-Noetherian?
\end{question}

We have seen in Corollary~\ref{cor:notcoh} that $D = D_n$ is an example of the above phenomenon when $n\geq 2$.  More generally, by \cite[Corollary 2.9]{Gu-recomp}, the answer is yes whenever $\dim R(D) \geq 2$.

\begin{question}
    For any integral domain $D$, must we have $\dim R(D) \leq \dim(D)$?
\end{question}

Note that there is no hope for \emph{equality} in the above, as we have for $D=D_n$ by Theorem~\ref{thm:dim}.  Indeed, the quantity $\phi(D) := \dim(D) - \dim R(D)$ can be any nonnegative value $\omega$, by letting $A$ be a Jaffard (e.g. Noetherian) Egyptian domain of dimension $\omega$ and $D = A[X_1, \ldots, X_d]$; then by Proposition~\ref{pr:reductiontoKalgebra} and Theorem~\ref{thm:dim}, $\dim R(D) = d$, but $\dim D = d+\dim A$, so $\phi(D) = \dim A=\omega$.  Moreover, one can make $A$ have any dimension $\omega$ by letting $G = \Z^{\oplus \omega}$ and $A = K[G]$ for any field $K$, which is Egyptian by \cite[Proposition 3]{GLO-Egypt}.

By \cite[Theorem 5.5]{Gu-recomp}, for any nonnegative integer $c$, one can construct integral domains $D$ all of whose Egyptian elements are units, such that $\phi(D)=c$.

The above question has a positive answer when $D$ is finitely generated over a field or falls into certain classes of semigroup algebras \cite[Theorem 3.2, Remark 4.11]{Gu-recomp}.

\begin{question}
    Let $D$ be a Noetherian integral domain with $\dim D = n\geq 2$. Are there infinitely many prime ideals of $R(D)$ of height $i$ for each $1\leq i \leq n-1$?
\end{question}

We see an example of this phenomenon when $D=D_n$ by Theorem~\ref{thm:infprimesallheights}.  If $D$ is not restricted to be Noetherian, however, there are counterexamples \cite[Theorem 4.2]{Gu-recomp}.

\section*{Acknowledgment}
The authors would like to thank the anonymous referees for their thorough work on our paper, which reads much better and has clearer focus as a result.

\newcommand{\etalchar}[1]{$^{#1}$}
\providecommand{\bysame}{\leavevmode\hbox to3em{\hrulefill}\thinspace}
\providecommand{\MR}{\relax\ifhmode\unskip\space\fi MR }
\providecommand{\MRhref}[2]{%
  \href{http://www.ams.org/mathscinet-getitem?mr=#1}{#2}
}
\providecommand{\href}[2]{#2}

\end{document}